\title{Notes on hyperbolic dynamics}
\author{Semyon Dyatlov}
\email{dyatlov@math.berkeley.edu}
\address{Department of Mathematics, University of California, Berkeley, CA 94720}
\begin{document}

\begin{abstract}
These expository notes present a proof of the Stable/Unstable Manifold Theorem
(also known as the Hadamard--Perron Theorem).
They also give examples of hyperbolic dynamics: geodesic flows on surfaces of negative
curvature and dispersing billiards.
\end{abstract}

\maketitle

\setlength{\epigraphwidth}{3.75in}

\epigraph{Примерно каждые пять лет, если не чаще, кто-нибудь заново ``открывает''
теорему Адамара~--- Перрона, доказывая ее либо по схеме доказательства
Адамара, либо по схеме Перрона. Я сам в этом повинен\dots\\\medskip

\emph{Every five years or so, if not more often, someone ``discovers'' again
the Hadamard--Perron Theorem, proving it using either Hadamard's or Perron's
method. I have been guilty of this myself\dots}}{Dmitri Anosov \cite[p.~23]{Anosov}}

\addtocounter{section}{1}
\addcontentsline{toc}{section}{1. Introduction}

These expository notes are intended as an introduction to some aspects of hyperbolic dynamics, with
emphasis on the Stable/Unstable Manifold Theorem (also known as the Hadamard--Perron Theorem),
partially following~\cite{KaHa}. They are structured
as follows:
\begin{itemize}
\item In~\S\ref{s:base} we present the Stable/Unstable Manifold Theorem in a simple setting,
capturing the essential components of the proof without some of the technical and notational complications.
\item In~\S\ref{s:general} we give a proof of the general Stable/Unstable Manifold Theorem
for sequences of transformations on $\mathbb R^n$
with canonical stable/unstable spaces at the origin, building on the special case in~\S\ref{s:base}.
\item In~\S\ref{s:maps-and-flows} we use the results of~\S\ref{s:general} to prove the Stable/Unstable Manifold
Theorem for general hyperbolic maps and flows.
\item In~\S\ref{s:examples} we give two important examples of hyperbolic systems:
geodesic flows on surfaces of negative curvature (\S\ref{s:surf-neg})
and dispersing billiard ball maps (\S\ref{s:billiard}).
\end{itemize}
For the (long and rich) history of hyperbolic dynamics we refer the reader to~\cite{KaHa}.

\section{Stable/unstable manifolds in a simple setting}
  \label{s:base}

In this section we present the Stable/Unstable Manifold Theorem
(broken into two parts, Theorem~\ref{t:stun-1} in~\S\ref{s:base-1} and Theorem~\ref{t:stun-2}
in~\S\ref{s:base-4})
under several simplifying assumptions:
\begin{itemize}
\item we study iterates of a single map $\varphi$, defined on a neighborhood
of $0$ in $\mathbb R^2$;
\item $\varphi(0)=0$ and $d\varphi(0)$ is a hyperbolic matrix,
with eigenvalues $2$ and~$1\over 2$;
\item the map $\varphi$ is close to the linearized map $x\mapsto d\varphi(0)\cdot x$
in the $C^{N+1}$ norm.
\end{itemize}
These assumptions are made to make the notation below simpler, however they
do not impact the substance of the proof. As explained below in~\S\S\ref{s:general}--\ref{s:maps-and-flows},
the arguments of this section can be adapted to the setting of general hyperbolic
maps and flows.

\subsection{Existence of stable/unstable manifolds}
  \label{s:base-1}

Throughout this section we use the following notation for $\ell_\infty$ balls in~$\mathbb R^2$:
$$
\overline B_\infty(0,r):=\{(x_1,x_2)\in\mathbb R^2\colon \max(|x_1|,|x_2|)\leq r\}.
$$
We assume that $U_\varphi,V_\varphi\subset \mathbb R^2$ are open sets
with $\overline B_\infty(0,1)\subset U_\varphi\cap V_\varphi$ and
$$
\varphi:U_\varphi\to V_\varphi
$$
is a $C^{N+1}$ map (here $N\geq 1$ is fixed) which satisfies the following assumptions:
\begin{enumerate}
\item $\varphi(0)=0$;
\item The differential $d\varphi(0)$ is equal to
\begin{equation}
  \label{e:hypas-1}
d\varphi(0)=\begin{pmatrix} 2 & 0 \\ 0 & 1/2 \end{pmatrix};
\end{equation}
\item for a small constant $\delta>0$ (chosen later in Theorems~\ref{t:stun-1} and~\ref{t:stun-2})
and all multiindices $\alpha$ with $2\leq |\alpha|\leq N+1$, we have
\begin{equation}
  \label{e:hypas-2}
\sup_{U_\varphi}|\partial^\alpha \varphi|\leq \delta;
\end{equation}
\item $\varphi$ is a diffeomorphism onto its image.
\end{enumerate}
We remark that assumptions~(3) and~(4) above can be arranged to hold locally by zooming
in to a small neighborhood of 0, see~\S\ref{s:reduce-1} below.

It follows from~\eqref{e:hypas-1} that the space $E_u(0):=\mathbb R\partial_{x_1}$
is preserved by the linearized map $x\mapsto d\varphi(0)\cdot x$ and vectors in this space
are expanded exponentially by the powers of $d\varphi(0)$. Similarly
the space $E_s(0):=\mathbb R\partial_{x_2}$ is invariant
and contracted exponentially by the powers of $d\varphi(0)$. We call $E_u(0)$ the \emph{unstable space}
and $E_s(0)$ the \emph{stable space} of $\varphi$ at~0. 

The main results of this section are the nonlinear versions of the above observations:
namely there exist one-dimensional \emph{unstable/stable submanifolds} $W_u,W_s\subset\mathbb R^2$
which are (locally) invariant under the map $\varphi$; the iterates of $\varphi$
are exponentially expanding on the unstable manifold and exponentially contracting
on the stable one.

We construct the unstable/stable manifolds as graphs of $C^N$ functions.
For a function $F:[-1,1]\to\mathbb R$ we define its unstable/stable graphs
\begin{equation}
  \label{e:stun-graphs}
\mathcal G_u(F):=\{x_2=F(x_1),\ |x_1|\leq 1\},\quad
\mathcal G_s(F):=\{x_1=F(x_2),\ |x_2|\leq 1\}
\end{equation}
which are subsets of $\mathbb R^2$.

Theorem~\ref{t:stun-1} below asserts 
existence of unstable/stable manifolds. The fact that
$\varphi$ is expanding on $W_u$ and contracting on $W_s$
is proved later in Theorem~\ref{t:stun-2}.
\begin{theo}
  \label{t:stun-1}
Assume that $\delta$ is small enough (depending only on $N$) and
assumptions~(1)--(4) above hold. Then there exist $C^N$ functions
$$
F_u,F_s:[-1,1]\to [-1,1],\quad
F_u(0)=F_s(0)=0,\quad
\partial_{x_1}F_u(0)=\partial_{x_2}F_s(0)=0
$$
such that, denoting the graphs (see Figure~\ref{f:stun-base})
\begin{equation}
  \label{e:W-u-def}
W_u:=\mathcal G_u(F_u),\quad
W_s:=\mathcal G_s(F_s),
\end{equation}
we have 
\begin{equation}
  \label{e:graph-inv}
\varphi(W_u)\cap \overline B_\infty(0,1)=W_u,\quad
\varphi^{-1}(W_s)\cap \overline B_\infty(0,1)=W_s.
\end{equation}
Moreover, $W_u\cap W_s=\{0\}$.
\end{theo}
\begin{figure}
\includegraphics{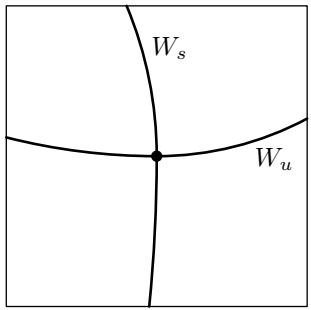}
\caption{The manifolds $W_u,W_s$.
The square is $\overline B_\infty(0,1)$, the horizontal direction is $x_1$,
and the vertical direction is $x_2$.}
\label{f:stun-base}
\end{figure}
The proof of the theorem, given in~\S\S\ref{s:base-2}--\ref{s:base-3} below,
is partially based on the proof of the more general Hadamard--Perron theorem
in~\cite[Theorem~6.2.8]{KaHa}. The main idea is to
show that the action of $\varphi$ on unstable graphs is a contraction mapping
with respect to an appropriately chosen metric (and same with the action
of $\varphi^{-1}$ on stable graphs). There are however two points in which
our proof differs from the one in~\cite{KaHa}:
\begin{itemize}
\item We run the contraction mapping argument on the metric space of $C^N$ functions
whose $N$-th derivative has Lipschitz norm bounded by 1, with the $C^N$ metric.
This is slightly different from the space used in~\cite[\S6.2.d, Step 3]{KaHa}
and it requires having $N+1$ derivatives of the map $\varphi$ to obtain
$C^N$ regularity for invariant graphs (rather than $N$ derivatives as in~\cite{KaHa}).
The upshot is that we do not need separate arguments for establishing regularity
of the manifolds~$W_u,W_s$ \cite[\S6.2.d, Steps 1--2, 4--5]{KaHa}.

\item We only consider the action of $\varphi$ on the ball $\overline B_\infty(0,1)$
rather than extending it to the entire $\mathbb R^2$ as in~\cite[Lemma~6.2.7]{KaHa}.
Because of this parts~(3)--(4) of Theorem~\ref{t:stun-2} below have a somewhat different proof
than the corresponding statement~\cite[Theorem~6.2.8(iii)]{KaHa}.
\end{itemize}

\noindent\textbf{Notation:} In the remainder of this section we denote by $C$ constants which depend only on $N$
(in particular they do not depend on $\delta$), and write
$R=\mathcal O(\delta)$ if $|R|\leq C\delta$. We assume that $\delta>0$ is chosen small (depending only on $N$).

\subsection{Action on graphs and derivative bounds}
  \label{s:base-2}

We start the proof of Theorem~\ref{t:stun-1} by considering the action of $\varphi$
on unstable graphs. (The action of $\varphi^{-1}$ on stable graphs is handled similarly.)
This action, denoted by $\Phi_u$ and called the \emph{graph transform}, is defined by
\begin{lemm}
  \label{l:gmap}
Let $F:[-1,1]\to \mathbb R$ satisfy
\begin{equation}
  \label{e:gmap-1}
F(0)=0,\quad
\sup |\partial_{x_1} F|\leq 1.
\end{equation}
Then there exists a function
$$
\Phi_uF:[-1,1]\to\mathbb R,\quad
\Phi_uF(0)=0
$$
such that (see Figure~\ref{f:gmap}) 
\begin{equation}
  \label{e:gmap-def}
\varphi(\mathcal G_u(F))\cap \{|x_1|\leq 1\}=\mathcal G_u(\Phi_u F).
\end{equation}
\end{lemm}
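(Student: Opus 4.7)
The plan is to parametrize $\mathcal{G}_u(F)$ by its first coordinate and then realize $\varphi(\mathcal{G}_u(F))\cap\{|x_1|\leq 1\}$ as a graph over the first coordinate by means of a one-dimensional implicit function argument. Write $\varphi=(\varphi_1,\varphi_2)$ and define
$$
g(x_1):=\varphi_1(x_1,F(x_1)),\qquad x_1\in[-1,1].
$$
Then every point of $\varphi(\mathcal{G}_u(F))$ has the form $(g(x_1),\varphi_2(x_1,F(x_1)))$, so all we need is to show that $g$ restricts to a diffeomorphism of some subinterval of $[-1,1]$ onto $[-1,1]$, and then invert it.

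The key estimate is that $g$ is uniformly expanding. By the chain rule,
$$
g'(x_1)=\partial_{x_1}\varphi_1(x_1,F(x_1))+\partial_{x_2}\varphi_1(x_1,F(x_1))\,F'(x_1).
$$
Assumption~\eqref{e:hypas-1} gives $\partial_{x_1}\varphi_1(0)=2$ and $\partial_{x_2}\varphi_1(0)=0$, while~\eqref{e:hypas-2} bounds the second derivatives of $\varphi$ by $\delta$, so on $\overline B_\infty(0,1)$ we have $\partial_{x_1}\varphi_1=2+\mathcal O(\delta)$ and $\partial_{x_2}\varphi_1=\mathcal O(\delta)$. Combined with $|F'|\leq 1$ from~\eqref{e:gmap-1}, this yields $g'(x_1)=2+\mathcal O(\delta)\geq 3/2$ for $\delta$ small, so $g$ is a strictly increasing $C^{N+1}$ map.

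Next I would check that $g$ hits all of $[-1,1]$ with arguments in $[-1,1]$. Since $F(0)=0$ we have $g(0)=\varphi_1(0,0)=0$, and the lower bound $g'\geq 3/2$ integrated from $0$ to $\pm 1$ gives $g(\pm 1)\gtrless \pm 3/2$; in particular $g([-1,1])\supset[-1,1]$. By strict monotonicity there is a unique $C^{N+1}$ inverse $h:[-1,1]\to[-1,1]$ with $h(0)=0$ and $g(h(y))=y$. Setting
$$
\Phi_u F(y):=\varphi_2(h(y),F(h(y))),\qquad y\in[-1,1],
$$
we get $\Phi_u F(0)=\varphi_2(0,0)=0$, and the identity~\eqref{e:gmap-def} follows directly: a point $\varphi(x_1,F(x_1))$ lies in $\{|x_1|\leq 1\}$ iff $g(x_1)\in[-1,1]$, iff $x_1=h(y)$ for some $y\in[-1,1]$, and in that case its coordinates are exactly $(y,\Phi_u F(y))$.

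The only real obstacle is obtaining the expansion estimate for $g'$, which in turn rests on two things: the hyperbolicity encoded in~\eqref{e:hypas-1} (so the diagonal entry $\partial_{x_1}\varphi_1(0)$ is genuinely $>1$) and the Lipschitz bound on $F$ that prevents the off-diagonal term $\partial_{x_2}\varphi_1\cdot F'$ from spoiling this. Both are encoded in the hypotheses, so the argument is essentially a one-variable inverse function theorem with constants made explicit in~$\delta$.
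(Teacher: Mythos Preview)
Your proof is correct and follows essentially the same approach as the paper: parametrize the graph, define the first-coordinate map $g$ (the paper's $G_1$), show $g'=2+\mathcal O(\delta)\geq 3/2$ from~\eqref{e:hypas-1}--\eqref{e:hypas-2} and $|F'|\leq 1$, invert, and set $\Phi_u F=\varphi_2(h(\cdot),F(h(\cdot)))=G_2\circ G_1^{-1}$. One minor slip: $g$ and $h$ inherit only the regularity of $F$, which under the hypothesis~\eqref{e:gmap-1} is just $C^1$, not $C^{N+1}$; this does not affect the argument here.
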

\begin{figure}
\includegraphics[scale=0.35]{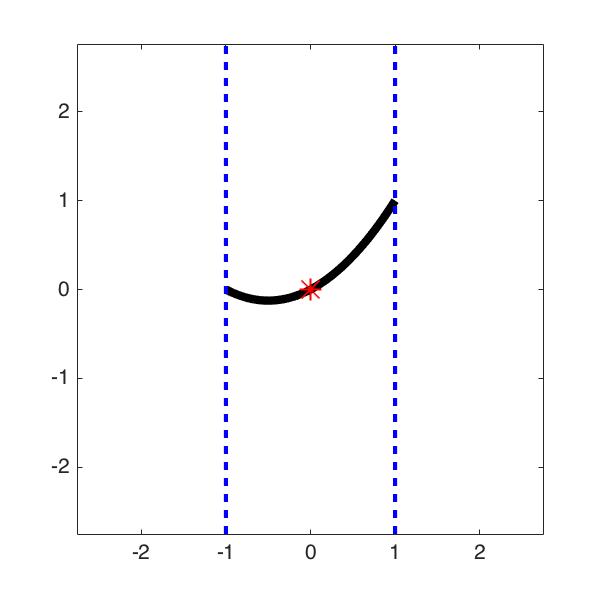}
\includegraphics[scale=0.35]{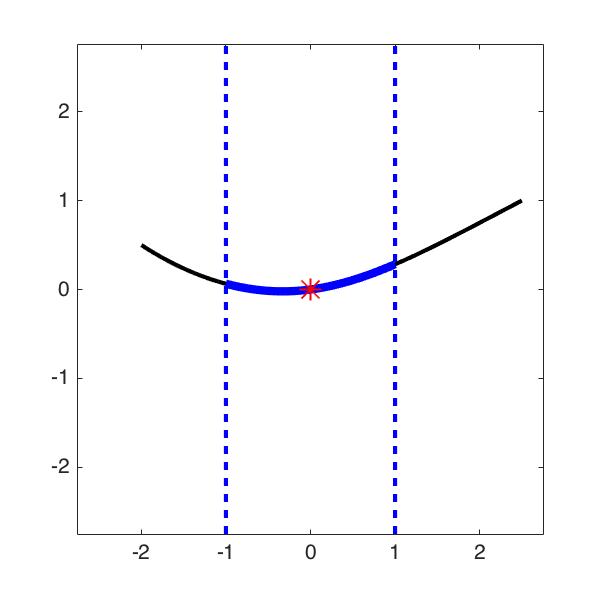}
\caption{Left: the graph $\mathcal G_u(F)$ for some function $F$ satisfying~\eqref{e:gmap-1}. Right:
the image of $\mathcal G_u(F)$ under $\varphi$.
The solid blue part is the graph $\mathcal G_u(\Phi_u F)$.
Figures~\ref{f:gmap}--\ref{f:exc-2} are plotted numerically using the map
$\varphi(x_1,x_2)=(2x_1+{1\over 2}x_2^2,{1\over 2}x_2+{1\over 2}x_1^2)$.}
\label{f:gmap}
\end{figure}
\begin{proof}
Define the components $\varphi_1,\varphi_2:\overline B_\infty(0,1)\to\mathbb R$ of $\varphi$ by
\begin{equation}
  \label{e:phi-12-def}
\varphi(x)=(\varphi_1(x),\varphi_2(x)),\quad
x\in \overline B_\infty(0,1).
\end{equation}
Next, define the functions $G_1,G_2:[-1,1]\to\mathbb R$ by
\begin{equation}
  \label{e:G-12-def}
G_1(x_1):=\varphi_1(x_1,F(x_1)),\quad
G_2(x_1):=\varphi_2(x_1,F(x_1)),
\end{equation}
so that the manifold $\varphi(\mathcal G_u(F))$ has the form
$$
\varphi(\mathcal G_u(F))=\{(G_1(x_1),G_2(x_1))\colon |x_1|\leq 1\}.
$$
To write $\varphi(\mathcal G_u(F))$ as a graph, we need to show that $G_1$
is invertible.
Note that $G_1(0)=0$. We next compute
$$
\partial_{x_1}G_1(x_1)=\partial_{x_1}\varphi_1(x_1,F(x_1))+\partial_{x_2}\varphi_1(x_1,F(x_1))\cdot\partial_{x_1}F(x_1).
$$
Together~\eqref{e:hypas-1} and~\eqref{e:hypas-2} imply for all $(x_1,x_2)\in \overline B_\infty(0,1)$
$$
\partial_{x_1}\varphi_1(x_1,x_2)=2+\mathcal O(\delta),\quad
\partial_{x_2}\varphi_1(x_1,x_2)=\mathcal O(\delta),
$$
so for $\delta$ small enough
\begin{equation}
  \label{e:G-1-der}
\partial_{x_1}G_1(x_1)=2+\mathcal O(\delta)\geq {3\over 2}\quad\text{for all }
x_1\in [-1,1].
\end{equation}
Therefore $G_1$ is a diffeomorphism and its image contains $[-1,1]$.
It follows that $\varphi(\mathcal G_u(F))\cap \{|x_1|\leq 1\}=\mathcal G_u(\Phi_u F)$ for
the function $\Phi_u F$ defined by
\begin{equation}
  \label{e:Phi-u-def}
\Phi_uF(y_1)=G_2(G_1^{-1}(y_1)),\quad y_1\in [-1,1]
\end{equation}
where
\begin{equation}
  \label{e:G-1-inv}
G_1^{-1}:[-1,1]\to [-1,1]
\end{equation}
is the inverse of $G_1$.
\end{proof}
We now want to estimate the function $\Phi_u F$ in terms of $F$,
ultimately showing that $\Phi_u$ is a contraction with respect
to a certain norm. For that we use the following
formula for the derivatives of $\Phi_u F$:
\begin{lemm}
  \label{l:derform}
Let $1\leq k\leq N$.
Assume that $F\in C^k([-1,1];\mathbb R)$ satisfies
\begin{equation}
  \label{e:gmap-2}
F(0)=0,\quad \sup |\partial_{x_1}^j F|\leq 1\quad\text{for all }j=1,\dots,k.  
\end{equation}
Then we have for all $y_1\in [-1,1]$ and $G_1^{-1}$ defined in~\eqref{e:G-1-inv}
\begin{equation}
  \label{e:derform}
\partial^k_{x_1}(\Phi_u F)(y_1)=L_k(x_1,F(x_1),\partial_{x_1}F(x_1),\dots,\partial_{x_1}^kF(x_1)),\quad
x_1:=G_1^{-1}(y_1)
\end{equation}
where the function $L_k(x_1,\tau_0,\dots,\tau_k)$, depending on $\varphi$
but not on $F$, is continuous on the cube $Q_k:=[-1,1]^{k+2}$. Moreover
$L_k(x_1,\tau_0,\dots,\tau_k)=2^{-k-1}\tau_k+\mathcal O(\delta)$,
with the remainder satisfying the derivative bounds
\begin{equation}
  \label{e:derform-2}
\begin{gathered}
\sup_{Q_k}\big|\partial^\alpha_{x_1}\partial^{\beta_0}_{\tau_0}\dots\partial^{\beta_k}_{\tau_k}
\big(L_k(x_1,\tau_0,\dots,\tau_k)-2^{-k-1}\tau_k\big)\big|\leq C_{\alpha\beta}\,\delta\\
\text{for all}\quad\alpha,\beta_0,\dots,\beta_k\quad\text{such that}\quad
\alpha+\beta_0+k\leq N+1.
\end{gathered}
\end{equation}
\end{lemm}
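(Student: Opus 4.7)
I would prove the lemma by induction on $k$, combining the chain rule with the inverse function theorem to derive a recursion for $L_k$ in terms of $L_{k-1}$.

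For the base case $k=1$, differentiating~\eqref{e:Phi-u-def} and using the inverse function theorem gives $\partial_{y_1}(\Phi_u F)(y_1) = G_2'(x_1)/G_1'(x_1)$ at $x_1 = G_1^{-1}(y_1)$. Since by~\eqref{e:G-12-def} the quantity $G_i'(x_1)$ depends on $F$ only through $F(x_1)$ and $\partial_{x_1}F(x_1)$, I define
\[
G_i^\sharp(x_1, \tau_0, \tau_1) := \partial_{x_1}\varphi_i(x_1, \tau_0) + \partial_{x_2}\varphi_i(x_1, \tau_0)\cdot \tau_1, \qquad L_1 := G_2^\sharp/G_1^\sharp.
\]
Using~\eqref{e:hypas-1}--\eqref{e:hypas-2} (together with the mean value theorem applied to $\partial_{x_2}\varphi_1$ and $\partial_{x_1}\varphi_2$, which vanish at the origin), $G_1^\sharp = 2 + \mathcal O(\delta) \geq 3/2$ and $G_2^\sharp = \tau_1/2 + \mathcal O(\delta)$ on $Q_1$ for small $\delta$, giving $L_1 = 2^{-2}\tau_1 + \mathcal O(\delta)$. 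The bounds~\eqref{e:derform-2} at $k=1$ follow from the quotient rule together with~\eqref{e:hypas-2}.

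For the inductive step, set $H(x_1) := L_{k-1}(x_1, F(x_1), \ldots, \partial^{k-1}_{x_1}F(x_1))$. The inductive hypothesis for $\partial^{k-1}_{y_1}(\Phi_u F)$ and the inverse function theorem give $\partial^k_{y_1}(\Phi_u F)(y_1) = H'(x_1)/G_1'(x_1)$, and expanding $H'$ by the chain rule produces exactly the new variable $\tau_k = \partial^k_{x_1}F$. This motivates
\[
L_k(x_1, \tau_0, \ldots, \tau_k) := \frac{1}{G_1^\sharp(x_1, \tau_0, \tau_1)}\Big(\partial_{x_1}L_{k-1} + \sum_{j=0}^{k-1}\partial_{\tau_j}L_{k-1}\cdot \tau_{j+1}\Big),
\]
with derivatives of $L_{k-1}$ evaluated at $(x_1, \tau_0, \ldots, \tau_{k-1})$. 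The leading-order claim $L_k = 2^{-k-1}\tau_k + \mathcal O(\delta)$ follows immediately from $L_{k-1} = 2^{-k}\tau_{k-1} + \mathcal O(\delta)$ (giving $\partial_{x_1}L_{k-1} = \mathcal O(\delta)$ and $\partial_{\tau_j}L_{k-1} = 2^{-k}\delta_{j,k-1} + \mathcal O(\delta)$) combined with $1/G_1^\sharp = 1/2 + \mathcal O(\delta)$.

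The main obstacle is verifying the derivative bound~\eqref{e:derform-2}, in particular explaining the asymmetric budget $\alpha + \beta_0 + k \leq N+1$. The key structural observation is that the recursion makes $L_k$ a polynomial in $\tau_2, \ldots, \tau_k$ whose coefficients are smooth functions of $(x_1, \tau_0, \tau_1)$, rational in $\tau_1$ with denominators being powers of $G_1^\sharp(x_1, \tau_0, \tau_1)$; since $G_1^\sharp \geq 3/2$ on $Q_k$, these denominators are uniformly bounded away from zero. Consequently, $\partial_{\tau_j}$ for $j \geq 1$ does not raise the order of any $\varphi$-derivative appearing in $L_k$, which explains why $\beta_1, \ldots, \beta_k$ do not enter the budget. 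By contrast, $\partial_{x_1}G_i^\sharp$ and $\partial_{\tau_0}G_i^\sharp$ involve second-order partials of $\varphi_i$, so each $\partial_{x_1}$ or $\partial_{\tau_0}$ raises the $\varphi$-derivative order by at most one. Tracking this through the recursion via the Leibniz rule, $\partial^\alpha_{x_1}\partial^{\beta_0}_{\tau_0}\partial^{\beta_1}_{\tau_1}\cdots\partial^{\beta_k}_{\tau_k}L_k$ involves derivatives of $\varphi$ of order at most $\alpha + \beta_0 + k \leq N+1$, and each term in $L_k - 2^{-k-1}\tau_k$ contains a factor that is $\mathcal O(\delta)$---either by~\eqref{e:hypas-2} directly (for factors of order $\geq 2$) or by the mean value theorem (for first-order factors that vanish in the linearization $\varphi = d\varphi(0)\cdot x$).
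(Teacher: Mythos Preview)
Your proposal is correct and follows essentially the same route as the paper: induction on $k$, with the base case $L_1=G_2^\sharp/G_1^\sharp$ and the recursion $L_k=(G_1^\sharp)^{-1}\big(\partial_{x_1}L_{k-1}+\sum_{j=0}^{k-1}\partial_{\tau_j}L_{k-1}\,\tau_{j+1}\big)$, which is exactly the paper's formula written in your notation (the paper uses $A_{jk}(x_1,\tau_0)=\partial_{x_k}\varphi_j(x_1,\tau_0)$ where you write $G_i^\sharp$). Your discussion of the derivative budget $\alpha+\beta_0+k\le N+1$---noting that $L_k$ is polynomial in $\tau_2,\dots,\tau_k$, that $\partial_{\tau_j}$ for $j\ge 1$ does not raise the order of the $\varphi$-derivatives involved, and that each application of $\partial_{x_1}$ or $\partial_{\tau_0}$ raises it by one---is more explicit than the paper, which simply declares the verification of~\eqref{e:derform-2} ``straightforward''.
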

\Remark In the linear case $\varphi(x)=d\varphi(0)\cdot x$ we have
$G_1(x_1)=2x_1$, $G_2(x_1)={1\over 2}F(x_1)$,
therefore $\Phi_u F(y_1)={1\over 2}F({1\over 2}y_1)$.
The meaning of~\eqref{e:derform-2} is that the
action of $\Phi_u$ on the derivatives of~$F$ for nonlinear $\varphi$
is $\mathcal O(\delta)$-close to the linear case. 
\begin{proof}
For $x\in \overline B_\infty(0,1)$ define the matrix
$$
A(x):=d\varphi(x)=(A_{jk}(x)),\quad
A_{jk}(x):=\partial_{x_k}\varphi_j(x).
$$
By~\eqref{e:hypas-1} and~\eqref{e:hypas-2} we have
\begin{equation}
  \label{e:A-prop}
A(x)=\begin{pmatrix} 2& 0 \\ 0 & 1/2\end{pmatrix}+\mathcal O(\delta),
\end{equation}
and the remainder in~\eqref{e:A-prop} is $\mathcal O(\delta)$ with
derivatives of order $\leq N$.

We argue by induction on $k$. 
For $k=1$, from the definition~\eqref{e:Phi-u-def}
of $\Phi_u F$ we have
$$
\partial_{x_1}(\Phi_u F)(y_1)={\partial_{x_1}G_2(x_1)\over \partial_{x_1}G_1(x_1)},\quad
x_1:=G_1^{-1}(y_1)
$$
so~\eqref{e:derform} holds with
\begin{equation}
  \label{e:lulla}
L_1(x_1,\tau_0,\tau_1)={A_{21}(x_1,\tau_0)+A_{22}(x_1,\tau_0) \tau_1\over
A_{11}(x_1,\tau_0)+A_{12}(x_1,\tau_0) \tau_1}.
\end{equation}
From~\eqref{e:A-prop} we see that $L_1(x_1,\tau_0,\tau_1)={1\over 4}\tau_1+\mathcal O(\delta)$
and the stronger remainder estimate~\eqref{e:derform-2} holds.

Now assume that $2\leq k\leq N$ and~\eqref{e:derform}, \eqref{e:derform-2} hold for $k-1$.
Then by the chain rule~\eqref{e:derform}
holds for~$k$ with
$$
L_{k}(x_1,\tau_0,\dots,\tau_{k}):={\partial_{x_1} L_{k-1}(x_1,\tau_0,\dots,\tau_{k-1})
+\sum_{j=0}^{k-1} \partial_{\tau_j}L_{k-1}(x_1,\tau_0,\dots,\tau_{k-1})\tau_{j+1}
\over A_{11}(x_1,\tau_0)+A_{12}(x_1,\tau_0)\tau_1}.
$$
It is straightforward to check that $L_k(x_1,\tau_0,\dots,\tau_k)=2^{-k-1}\tau_k+\mathcal O(\delta)$
and the stronger remainder estimate~\eqref{e:derform-2} holds.
\end{proof}
Armed with Lemma~\ref{l:derform} we estimate the derivatives
of $\Phi_u F$ in terms of the derivatives of $F$. Let $1\leq k\leq N$.
We use the following seminorm
on $C^k([-1,1];\mathbb R)$:
\begin{equation}
  \label{e:C-k-norm}
\|F\|_{C^k}:=\max_{1\leq j\leq k}\sup|\partial^j_{x_1}F|.
\end{equation}
We will work with functions satisfying $F(0)=0$, on which
$\|\bullet\|_{C^k}$ is a norm:
\begin{equation}
  \label{e:F-sup-bound}
F(0)=0\quad\Longrightarrow\quad
\sup|F|\leq \|F\|_{C^1}.
\end{equation}
To establish the contraction property (see the remark following
the proof of Lemma~\ref{l:derest-2}) we also
need the space of functions $C^{k,1}([-1,1];\mathbb R)$
with Lipschitz continuous $k$-th derivative, endowed with the seminorm
\begin{equation}
  \label{e:C-k-1-norm}
\|F\|_{C^{k,1}}:=\max\bigg(\|F\|_{C^k},\sup_{x_1\neq\tilde x_1}
{|\partial^k_{x_1}F(x_1)-\partial^k_{x_1}F(\tilde x_1)|\over |x_1-\tilde x_1|}\bigg).
\end{equation}
Note that $\|F\|_{C^k}\leq \|F\|_{C^{k,1}}\leq \|F\|_{C^{k+1}}$.

Our first estimate implies that $\Phi_u$ maps the unit
balls in $C^k$, $C^{k,1}$ into themselves:
\begin{lemm}
  \label{l:derest-1}
Let $1\leq k\leq N$ and assume that $F(0)=0$ and $\|F\|_{C^k}\leq 1$.
Then
\begin{equation}
  \label{e:derest-1.1}
\|\Phi_u F\|_{C^k}\leq {1\over 4}\|F\|_{C^k}+C\delta.
\end{equation}
If additionally $\|F\|_{C^{k,1}}\leq 1$ then
\begin{equation}
  \label{e:derest-1.2}
\|\Phi_u F\|_{C^{k,1}}\leq {1\over 4}\|F\|_{C^{k,1}}+C\delta.
\end{equation}
\end{lemm}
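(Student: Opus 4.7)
The plan is to apply Lemma~\ref{l:derform} to each derivative order $j=1,\dots,k$ and exploit the decomposition $L_j=2^{-j-1}\tau_j+\mathcal O(\delta)$. First I would check that the hypotheses of Lemma~\ref{l:derform} are met: from $F(0)=0$, $\|F\|_{C^k}\leq 1$ and~\eqref{e:F-sup-bound} we have $\sup|\partial_{x_1}^j F|\leq 1$ for $0\leq j\leq k$, so the tuple $(x_1,F(x_1),\partial_{x_1}F(x_1),\dots,\partial_{x_1}^j F(x_1))$ stays in the cube $Q_j$ for every $x_1\in[-1,1]$.

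For~\eqref{e:derest-1.1}, write, with $x_1=G_1^{-1}(y_1)$,
$$
\partial_{x_1}^j(\Phi_u F)(y_1)=2^{-j-1}\partial_{x_1}^j F(x_1)+R_j(x_1),\qquad |R_j|\leq C\delta,
$$
where the remainder bound is the $\alpha=\beta_0=\dots=\beta_j=0$ case of~\eqref{e:derform-2}. Taking absolute values gives $|\partial_{x_1}^j(\Phi_u F)(y_1)|\leq 2^{-j-1}\|F\|_{C^k}+C\delta$, and since $\max_{1\leq j\leq k}2^{-j-1}=\frac14$ (attained at $j=1$), taking the supremum over $y_1$ and the maximum over $j\leq k$ yields~\eqref{e:derest-1.1}.

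For~\eqref{e:derest-1.2} I would bound the Lipschitz constant of $\partial_{x_1}^k(\Phi_u F)$ and combine it with~\eqref{e:derest-1.1} applied to $\|F\|_{C^k}\leq\|F\|_{C^{k,1}}$. For $y_1,\tilde y_1\in[-1,1]$ set $x_1=G_1^{-1}(y_1)$, $\tilde x_1=G_1^{-1}(\tilde y_1)$; from~\eqref{e:G-1-der}, $G_1^{-1}$ is Lipschitz with constant $\leq\tfrac23$. Split $L_k=2^{-k-1}\tau_k+M_k$ with $|M_k|$ and all first-order partials of $M_k$ bounded by $C\delta$ (this is~\eqref{e:derform-2} with a single derivative, for which $\alpha+\beta_0+k\leq k+1\leq N+1$). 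The main term contributes a Lipschitz constant at most
$$
2^{-k-1}\cdot\|F\|_{C^{k,1}}\cdot\tfrac23\;\leq\;\tfrac16\|F\|_{C^{k,1}}.
$$
For $M_k$, the chain rule and the facts $|\partial_{x_1}^i F(x_1)-\partial_{x_1}^i F(\tilde x_1)|\leq\|F\|_{C^{k,1}}|x_1-\tilde x_1|\leq|x_1-\tilde x_1|$ for all $0\leq i\leq k$ yield a Lipschitz constant $\leq C\delta$ for the composition $y_1\mapsto M_k(x_1,F(x_1),\dots,\partial_{x_1}^k F(x_1))$. Adding these and observing $\tfrac16\leq\tfrac14$ gives~\eqref{e:derest-1.2}.

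The only mildly delicate point is the $C^{k,1}$ bound: one has to remember to absorb the Lipschitz factor $\tfrac23$ of $G_1^{-1}$ and to check that all arguments $\partial_{x_1}^i F(x_1)$ appearing in $M_k$ have Lipschitz constant $\leq 1$ (for $i<k$ this uses $\|F\|_{C^k}\leq 1$, and for $i=k$ it uses $\|F\|_{C^{k,1}}\leq 1$). Once this bookkeeping is in place, both inequalities follow from Lemma~\ref{l:derform} with no further estimates.
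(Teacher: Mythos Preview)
Your proof is correct and follows essentially the same approach as the paper: both arguments invoke Lemma~\ref{l:derform} to write $\partial_{x_1}^j(\Phi_uF)$ as $2^{-j-1}\partial_{x_1}^jF+\mathcal O(\delta)$ for~\eqref{e:derest-1.1}, and then compare values at two points $y_1,\tilde y_1$ via $x_1=G_1^{-1}(y_1)$, $\tilde x_1=G_1^{-1}(\tilde y_1)$ for~\eqref{e:derest-1.2}. The only cosmetic difference is that you keep track of the explicit Lipschitz constant $\tfrac23$ for $G_1^{-1}$, obtaining a main-term coefficient $\tfrac16$, whereas the paper simply uses $|x_1-\tilde x_1|\leq|y_1-\tilde y_1|$ and the bound $2^{-k-1}\leq\tfrac14$; this sharper bookkeeping is harmless but unnecessary for the stated conclusion.
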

\begin{proof}
Let $y_1\in [-1,1]$ and $x_1:=G_1^{-1}(y_1)\in [-1,1]$.
By Lemma~\ref{l:derform} we have for all $j=1,\dots,k$
$$
|\partial_{x_1}^j (\Phi_uF)(y_1)|\leq 2^{-j-1}|\partial_{x_1}^jF(x_1)|+C\delta
\leq {1\over 4}\|F\|_{C^k}+C\delta
$$
which implies~\eqref{e:derest-1.1}.

Next, assume that $\|F\|_{C^{k,1}}\leq 1$. Take $y_1,\tilde y_1\in [-1,1]$ such that $y_1\neq \tilde y_1$. Put
$x_1:=G_1^{-1}(y_1)$, $\tilde x_1:=G_1^{-1}(\tilde y_1)$. Then
by Lemma~\ref{l:derform}
$$
\begin{aligned}
|\partial_{x_1}^k (\Phi_u F)(y_1)-\partial_{x_1}^k(\Phi_u F)(\tilde y_1)|
&\leq
2^{-k-1}|\partial_{x_1}^k F(x_1)-\partial_{x_1}^k F(\tilde x_1)|
+C\delta|x_1-\tilde x_1|\\
&\quad\ +C\delta\max_{0\leq j\leq k}|\partial_{x_1}^j F(x_1)-\partial_{x_1}^j F(\tilde x_1)|
\\
&\leq
\Big({1\over 4}\|F\|_{C^{k,1}}+C\delta+C\delta \|F\|_{C^{k,1}}\Big)|x_1-\tilde x_1|.
\end{aligned}
$$
Since $|x_1-\tilde x_1|\leq |y_1-\tilde y_1|$ by~\eqref{e:G-1-der},
this implies~\eqref{e:derest-1.2}.
\end{proof}

The next estimate gives the contraction property of $\Phi_u$
in $C^k$ norm:
\begin{lemm}
  \label{l:derest-2}
Let $1\leq k\leq N$. Assume that $F,\widetilde F\in C^{k,1}$ satisfy $F(0)=\widetilde F(0)=0$
and $\|F\|_{C^{k,1}},\|\widetilde F\|_{C^{k,1}}\leq 1$. Then
\begin{equation}
  \label{e:derest-2}
\|\Phi_u F-\Phi_u \widetilde F\|_{C^k}\leq \Big({1\over 4}+C\delta\Big)\|F-\widetilde F\|_{C^k}.
\end{equation}
\end{lemm}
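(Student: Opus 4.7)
The plan is to use the formula for $\partial_{x_1}^j(\Phi_u F)$ from Lemma~\ref{l:derform} and compare derivative-by-derivative at a fixed point $y_1\in[-1,1]$. The main subtlety is that the preimage $G_1^{-1}(y_1)$ depends on $F$: writing $G_1(x_1)=\varphi_1(x_1,F(x_1))$ and $\widetilde G_1(x_1)=\varphi_1(x_1,\widetilde F(x_1))$, the points $x_1:=G_1^{-1}(y_1)$ and $\tilde x_1:=\widetilde G_1^{-1}(y_1)$ at which we evaluate $L_j$ for $F$ and for $\widetilde F$ are not the same, and this is precisely where the $C^{k,1}$ hypothesis gets used.

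First I would estimate $|x_1-\tilde x_1|$. From $G_1(x_1)=\widetilde G_1(\tilde x_1)=y_1$ we get
$$
\widetilde G_1(x_1)-\widetilde G_1(\tilde x_1)=\varphi_1(x_1,\widetilde F(x_1))-\varphi_1(x_1,F(x_1)),
$$
whose right-hand side is $\mathcal O(\delta)\,|F(x_1)-\widetilde F(x_1)|=\mathcal O(\delta)\|F-\widetilde F\|_{C^k}$ (using $F(0)=\widetilde F(0)=0$ and~\eqref{e:F-sup-bound}), while the analogue of~\eqref{e:G-1-der} applied to $\widetilde G_1$ makes the left-hand side bounded below by ${3\over 2}|x_1-\tilde x_1|$. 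Thus
$$
|x_1-\tilde x_1|\leq C\delta\,\|F-\widetilde F\|_{C^k}.
$$

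Next, fix $1\leq j\leq k$ and write
$$
\partial_{x_1}^j(\Phi_uF)(y_1)-\partial_{x_1}^j(\Phi_u\widetilde F)(y_1)
=L_j(x_1,F(x_1),\dots,\partial_{x_1}^jF(x_1))-L_j(\tilde x_1,\widetilde F(\tilde x_1),\dots,\partial_{x_1}^j\widetilde F(\tilde x_1)).
$$
Decompose $L_j=2^{-j-1}\tau_j+R_j$ with $R_j$ satisfying~\eqref{e:derform-2}. The linear piece contributes
$$
2^{-j-1}\bigl(\partial_{x_1}^jF(x_1)-\partial_{x_1}^j\widetilde F(\tilde x_1)\bigr),
$$
which splits as $2^{-j-1}(\partial_{x_1}^jF(x_1)-\partial_{x_1}^jF(\tilde x_1))+2^{-j-1}(\partial_{x_1}^jF(\tilde x_1)-\partial_{x_1}^j\widetilde F(\tilde x_1))$. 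The first summand is bounded by $2^{-j-1}\|F\|_{C^{k,1}}|x_1-\tilde x_1|=\mathcal O(\delta)\|F-\widetilde F\|_{C^k}$ (this is where $C^{k,1}$ is used, when $j=k$), and the second by $2^{-j-1}\|F-\widetilde F\|_{C^k}\leq {1\over 4}\|F-\widetilde F\|_{C^k}$. For the remainder $R_j$, since its first derivatives in all arguments are $\mathcal O(\delta)$ by~\eqref{e:derform-2}, the mean value theorem gives
$$
|R_j(\dots)-R_j(\dots)|\leq C\delta\Bigl(|x_1-\tilde x_1|+\sum_{i=0}^j\bigl|\partial_{x_1}^iF(x_1)-\partial_{x_1}^i\widetilde F(\tilde x_1)\bigr|\Bigr),
$$
and each term in the sum is $\leq |x_1-\tilde x_1|+\|F-\widetilde F\|_{C^k}$ by the same Lipschitz/triangle argument as above, so the whole contribution is $\mathcal O(\delta)\|F-\widetilde F\|_{C^k}$. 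Combining yields~\eqref{e:derest-2}.

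The main obstacle is the mismatch between $x_1$ and $\tilde x_1$, which forces one to use Lipschitz continuity of $\partial_{x_1}^kF$, and hence the $C^{k,1}$ norm bound on $F$, to handle the $j=k$ term with coefficient $2^{-k-1}\leq 1/4$; for $j<k$ the ordinary $C^k$ bound suffices via $\partial_{x_1}^{j+1}F$. Everything else is bookkeeping built from \eqref{e:G-1-der}, \eqref{e:derform}, \eqref{e:derform-2}.
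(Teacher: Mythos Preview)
Your proof is correct and follows essentially the same approach as the paper's: both first bound $|x_1-\tilde x_1|\leq C\delta\|F-\widetilde F\|_{C^k}$ via~\eqref{e:G-1-der} and $\partial_{x_2}\varphi_1=\mathcal O(\delta)$, then use the $C^{k,1}$ bound on $F$ to control $|\partial_{x_1}^jF(x_1)-\partial_{x_1}^jF(\tilde x_1)|$ (the paper packages this as the intermediate estimate~\eqref{e:anor-2}), and finally apply the structure of $L_j$ from Lemma~\ref{l:derform}. The only cosmetic difference is that you explicitly split $L_j=2^{-j-1}\tau_j+R_j$ and handle the pieces separately, whereas the paper absorbs this into a single line using the $\mathcal O(\delta)$ remainder bounds~\eqref{e:derform-2}.
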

\begin{proof}
Let $G_1$ and~$\widetilde G_1$ be defined by~\eqref{e:G-12-def} using the functions
$F$ and~$\widetilde F$ respectively. 
Take $y_1\in [-1,1]$ and put $x_1:=G_1^{-1}(y_1)$, $\tilde x_1:=\widetilde G_1^{-1}(y_1)$,
see Figure~\ref{f:contractor}.
We first estimate the difference between the two inverses $x_1$, $\tilde x_1$:
\begin{equation}
  \label{e:anor-1}
|x_1-\tilde x_1|\leq C\delta\|F-\widetilde F\|_{C^1}.
\end{equation}
\begin{figure}
\includegraphics[scale=0.35]{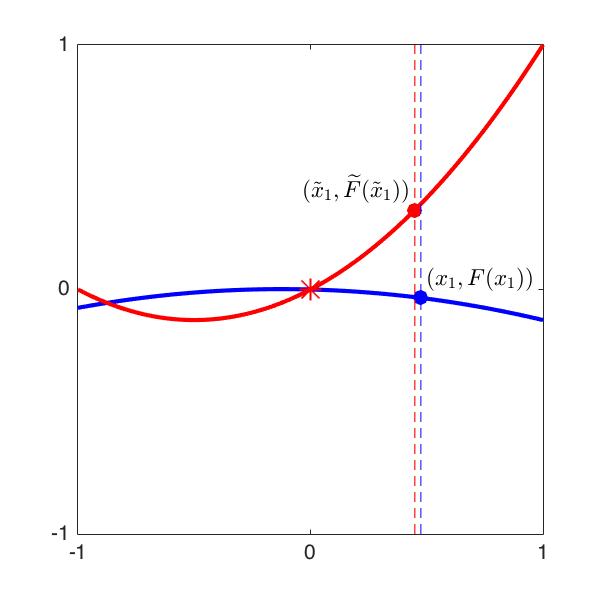}
\includegraphics[scale=0.35]{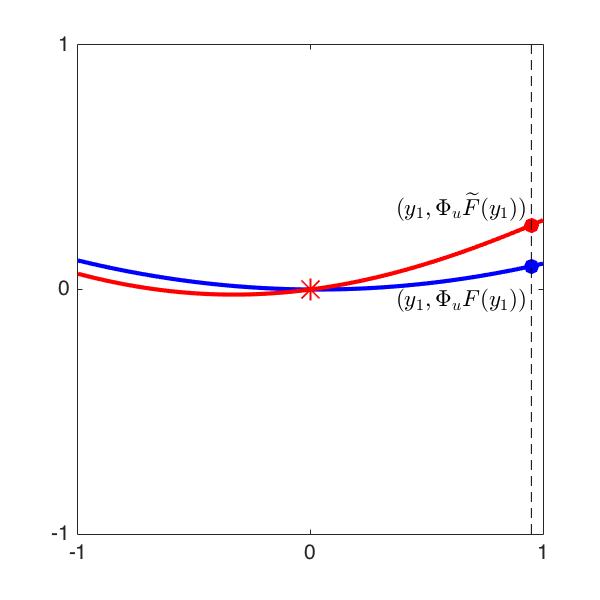}
\caption{Left: the points $(x_1,F(x_1))$, $(\tilde x_1,\widetilde F(\tilde x_1))$.
The blue curve is the graph of $F$ and the red curve is the graph of $\widetilde F$.
Right: the image of the picture on the left by $\varphi$.}
\label{f:contractor}
\end{figure}
To show~\eqref{e:anor-1}, we write
$$
\begin{aligned}
|x_1-\tilde x_1|&\leq |\widetilde G_1(x_1)-\widetilde G_1(\tilde x_1)|=
|\widetilde G_1(x_1)-G_1(x_1)|
\\&=|\varphi_1(x_1,\widetilde F(x_1))-\varphi_1(x_1,F(x_1))|\leq C\delta \|F-\widetilde F\|_{C^1}
\end{aligned}
$$
where the first inequality follows from~\eqref{e:G-1-der}
and the last inequality uses that $\partial_{x_2}\varphi_1=\mathcal O(\delta)$
by~\eqref{e:A-prop}
and $|\widetilde F(x_1)-F(x_1)|\leq \|F-\widetilde F\|_{C^1}$ by~\eqref{e:F-sup-bound}.

Next we have for all $j=0,\dots,k$
\begin{equation}
  \label{e:anor-2}
|\partial_{x_1}^j F(x_1)-\partial_{x_1}^j \widetilde F(\tilde x_1)|\leq (1+C\delta)
\|F-\widetilde F\|_{C^k}.
\end{equation}
Indeed,
$$
\begin{aligned}
|\partial_{x_1}^j F(x_1)-\partial_{x_1}^j \widetilde F(\tilde x_1)|&\leq
|\partial_{x_1}^j F(x_1)-\partial_{x_1}^j F(\tilde x_1)|
+|\partial_{x_1}^j F(\tilde x_1)-\partial_{x_1}^j\widetilde F(\tilde x_1)|\\
&\leq |x_1-\tilde x_1|+\|F-\widetilde F\|_{C^k}\leq (1+C\delta)\|F-\widetilde F\|_{C^k}
\end{aligned}
$$
where the second inequality uses the fact that $\|F\|_{C^{k,1}}\leq 1$
and the last inequality used~\eqref{e:anor-1}.

Finally, by Lemma~\ref{l:derform} we estimate for all $j=1,\dots,k$
$$
\begin{aligned}
|\partial^j_{x_1}(\Phi_uF)(y_1)-\partial^j_{x_1}(\Phi_u\widetilde F)(y_1)|&\leq
2^{-j-1}|\partial_{x_1}^j F(x_1)-\partial_{x_1}^j \widetilde F(\tilde x_1)|
\\&\quad +C\delta |x_1-\tilde x_1|+C\delta\max_{0\leq \ell\leq j}|\partial_{x_1}^\ell F(x_1)-\partial_{x_1}^\ell\widetilde F(\tilde x_1)|
\\&\leq
\Big({1\over 4}+C\delta\Big)\|F-\widetilde F\|_{C^k}
\end{aligned}
$$
where the second inequality uses~\eqref{e:anor-1} and~\eqref{e:anor-2}.
This implies~\eqref{e:derest-2}.
\end{proof}
\Remark The a priori bound $\|F\|_{C^{k,1}}\leq 1$
was used in the proof of~\eqref{e:anor-2}. Without it we would
not be able to estimate the difference
$|\partial^k_{x_1} F(x_1)-\partial^k_{x_1}\widetilde F(\tilde x_1)|$
since the functions $\partial^k_{x_1}F$, $\partial^k_{x_1}\widetilde F$
are evaluated at two different values of $x_1$. We do not
use the stronger a priori bound $\|F\|_{C^{k+1}}\leq 1$
because it would make it harder to set up a complete metric space
for the contraction mapping argument in~\eqref{e:X-N-def} below.

\subsection{Contraction mapping argument}
  \label{s:base-3}

\begin{figure}
\includegraphics[scale=0.225]{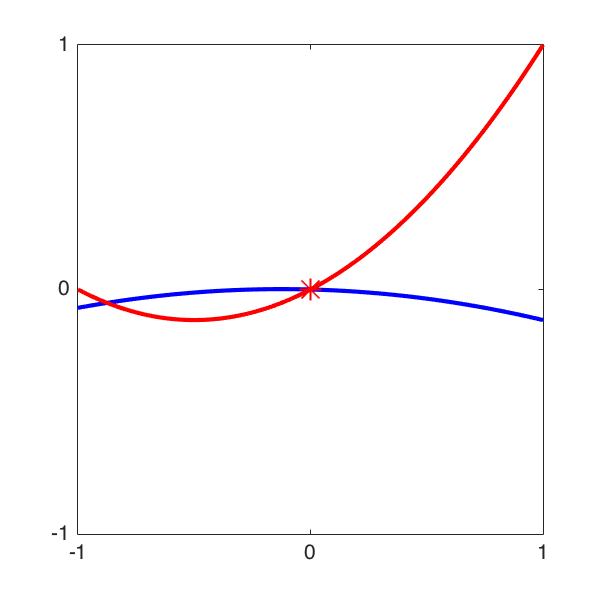}
\includegraphics[scale=0.225]{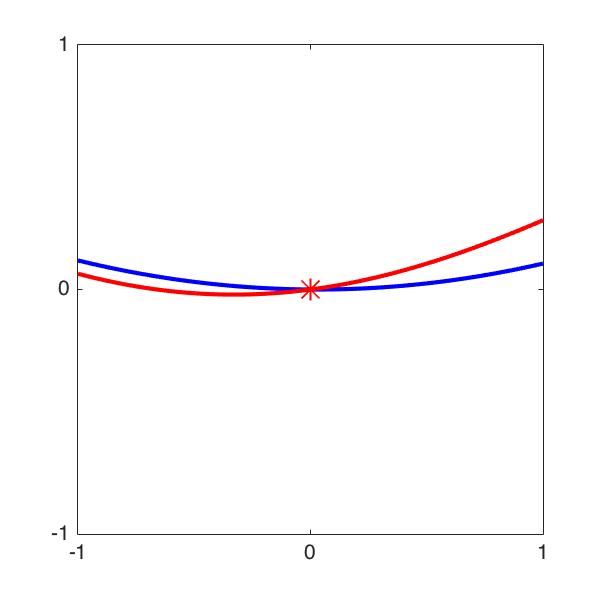}
\includegraphics[scale=0.225]{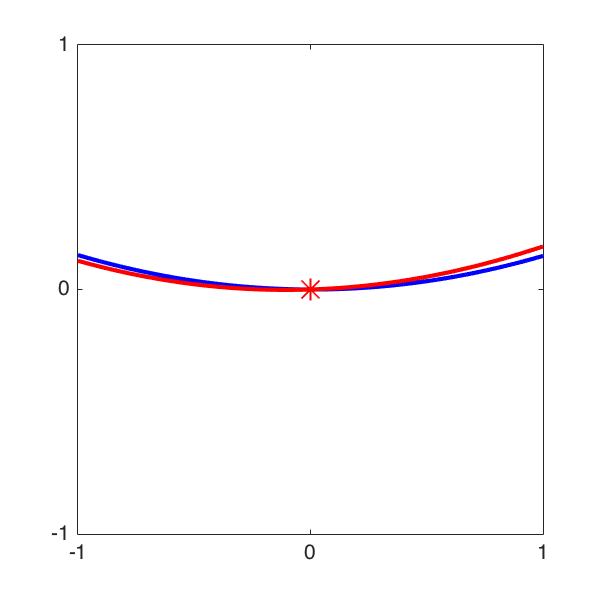}
\hbox to\hsize{\quad\hss $n=0$\hss\hss $n=1$\hss\hss $n=2$\hss\quad}
\caption{The iterations $\varphi^n(\mathcal G_{F_0})$ for two different choices
of $F_0$. Both converge to $W_u$, illustrating~\eqref{e:contramap-u}.}
\label{f:contramap}
\end{figure}

We now give the proof of Theorem~\ref{t:stun-1} using the estimates
from the previous section. We show existence of the function $F_u$;
the function $F_s$ is constructed similarly, replacing $\varphi$ by $\varphi^{-1}$
and switching the roles of $x_1$ and $x_2$. 

Consider the metric space $(\mathcal X_N,d_N)$ defined using the seminorms~\eqref{e:C-k-norm},
\eqref{e:C-k-1-norm}:
\begin{equation}
  \label{e:X-N-def}
\begin{aligned}
\mathcal X_N&:=\{F\in C^{N,1}([-1,1];\mathbb R)\colon F(0)=0,\ \|F\|_{C^{N,1}}\leq 1\},\\
d_N(F,\widetilde F)&:=\|F-\widetilde F\|_{C^N}.
\end{aligned}
\end{equation}
Then $(\mathcal X_N,d_N)$ is a complete metric space. Indeed, it is the subset
of the closed unit ball in $C^N$ defined using the closed conditions
$$
F(0)=0,\quad
|\partial^N_{x_1}F(x_1)-\partial^N_{x_1} F(\tilde x_1)|\leq |x_1-\tilde x_1|\quad\text{for all }
x_1,\tilde x_1\in [-1,1].
$$
By Lemmas~\ref{l:gmap} and~\ref{l:derest-1}, for $\delta$ small enough the graph transform defines a map
$$
\Phi_u:\mathcal X_N\to\mathcal X_N.
$$
By Lemma~\ref{l:derest-2}, for $\delta$ small enough this map is a contraction, specifically
\begin{equation}
  \label{e:contraction-achieved}
d_N(\Phi_u F,\Phi_u\widetilde F)\leq {1\over 3}d_N(F,\widetilde F)\quad\text{for all }
F,\widetilde F\in \mathcal X_N.
\end{equation}
Therefore by the Contraction Mapping Principle the map $\Phi_u$ has a unique fixed point
$$
F_u\in\mathcal X_N,\quad
\Phi_uF_u=F_u.
$$
In fact for each fixed $F_0\in \mathcal X_N$ we have
(see Figure~\ref{f:contramap})
\begin{equation}
  \label{e:contramap-u}
(\Phi_u)^n F_0\to F_u\quad\text{in }C^N\quad\text{as }n\to\infty.
\end{equation}
Let $W_u:=\mathcal G_u(F_u)\subset \overline B_\infty(0,1)$ be the unstable graph of $F_u$.
Recalling the definition~\eqref{e:gmap-def} of $\Phi_uF$, we have
$$
\varphi(W_u)\cap \{|x_1|\leq 1\}=\mathcal G_u(\Phi_u F_u)=W_u.
$$
It follows that $\varphi(W_u)\cap \overline B_\infty(0,1)=W_u$, giving~\eqref{e:graph-inv}.

Next, we see from~\eqref{e:lulla} that $F(0)=\partial_{x_1}F(0)=0$ implies
$\partial_{x_1}(\Phi_u F)(0)=0$.
Using~\eqref{e:contramap-u} with $F_0\equiv 0$,
we get $\partial_{x_1}F_u(0)=0$.

By Lemma~\ref{l:derest-1} we have the following derivative bounds on $F_u$, $F_s$:
\begin{equation}
  \label{e:F-u-derb}
\|F_u\|_{C^{N,1}}\leq C\delta,\quad
\|F_s\|_{C^{N,1}}\leq C\delta.
\end{equation}
This implies that $W_u\cap W_s=\{0\}$. Indeed, if $(x_1,x_2)\in W_u\cap W_s$,
then $x_2=F_u(x_1)$ and $x_1=F_s(x_2)$. Therefore,
$x_1=F_s(F_u(x_1))$. However~\eqref{e:F-u-derb} implies
that
\begin{equation}
  \label{e:F-comp-contra}
\sup_{[-1,1]}|\partial_{x_1}(F_s\circ F_u)|\leq C\delta.
\end{equation}
Therefore $F_s\circ F_u:[-1,1]\to[-1,1]$ is a contraction
and the equation $x_1=F_s(F_u(x_1))$ has only one solution, $x_1=0$.

\Remark The above proof shows that $F_u\in C^{N,1}$.
One can prove in fact that $F_u\in C^{N+1}$, see~\cite[\S6.2.d, Step~5]{KaHa}.

\subsection{Further properties}
  \label{s:base-4}
  
In this section we prove the following theorem which relates the manifolds
$W_u,W_s$ to the behavior of large iterates $\varphi^n$ of the map $\varphi$:
\begin{theo}
  \label{t:stun-2}
Let $\varphi$ be as in Theorem~\ref{t:stun-1} and $\delta$ be small enough depending only on $N$.
Let $W_u,W_s$ be defined in~\eqref{e:W-u-def}. Then:
\begin{enumerate}
\item If $w\in W_u$ then $\varphi^{-n}(w)\to 0$ as $n\to\infty$,
more precisely
\begin{equation}
  \label{e:stun-2-1}
|\varphi^{-n}(w)|\leq \Big({1\over 2}+C\delta\Big)^n|w|\quad\text{ for all }n\geq 0.
\end{equation}
\item If $w\in W_s$ then $\varphi^{n}(w)\to 0$ as $n\to\infty$,
more precisely
\begin{equation}
  \label{e:stun-2-2}
|\varphi^{n}(w)|\leq \Big({1\over 2}+C\delta\Big)^n|w|\quad\text{ for all }n\geq 0.
\end{equation}
\item If $w\in \overline B_\infty(0,1)$ satisfies $\varphi^{-n}(w)\in \overline B_\infty(0,1)$
for all $n\geq 0$, then $w\in W_u$.
\item If $w\in \overline B_\infty(0,1)$ satisfies $\varphi^{n}(w)\in \overline B_\infty(0,1)$
for all $n\geq 0$, then $w\in W_s$.
\end{enumerate}
\end{theo}
\Remark Theorem~\ref{t:stun-2} implies the following dynamical characterization
of the unstable/stable manifolds $W_u,W_s$:
\begin{equation}
  \label{e:stun-char}
\begin{aligned}
w\in W_u&\quad\Longleftrightarrow\quad \varphi^{-n}(w)\in \overline B_\infty(0,1)\quad\text{for all }n\geq 0;\\
w\in W_s&\quad\Longleftrightarrow\quad \varphi^{n}(w)\in \overline B_\infty(0,1)\quad\text{for all }n\geq 0.
\end{aligned}
\end{equation}
See Figure~\ref{f:dynch}.

We only give the proof of parts~(1) and~(3) in Theorem~\ref{t:stun-2}.
Parts~(2) and~(4), characterizing the stable manifold, are proved similarly, replacing
$\varphi$ by $\varphi^{-1}$ and switching the roles of $x_1$ and~$x_2$.

\begin{figure}
\includegraphics[scale=0.115]{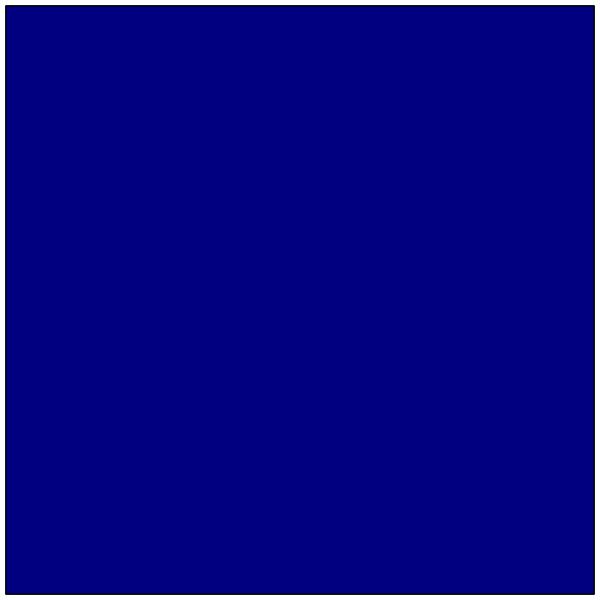}
\includegraphics[scale=0.115]{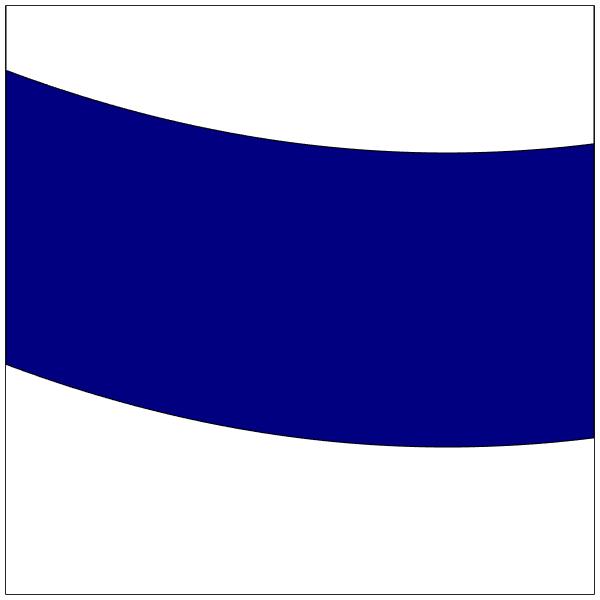}
\includegraphics[scale=0.115]{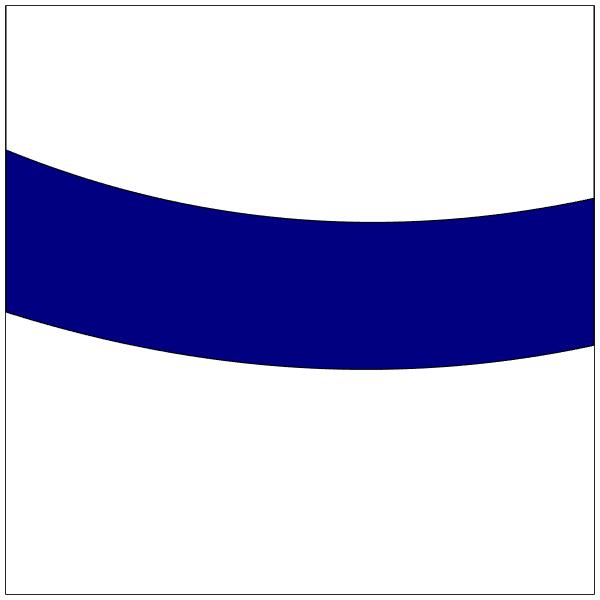}
\includegraphics[scale=0.115]{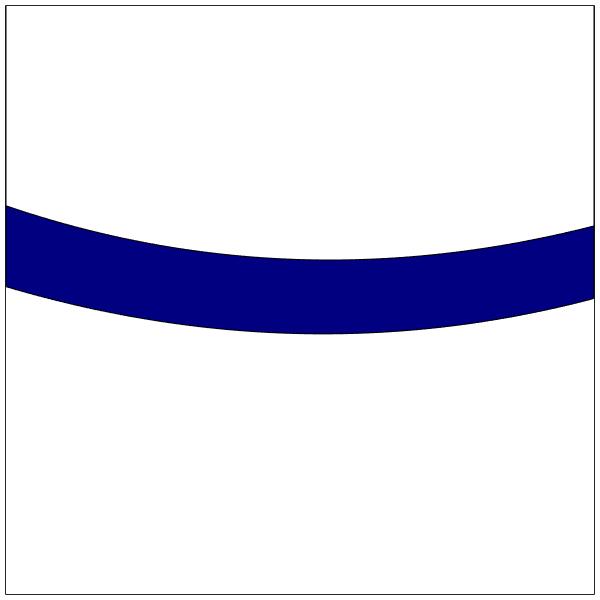}
\includegraphics[scale=0.115]{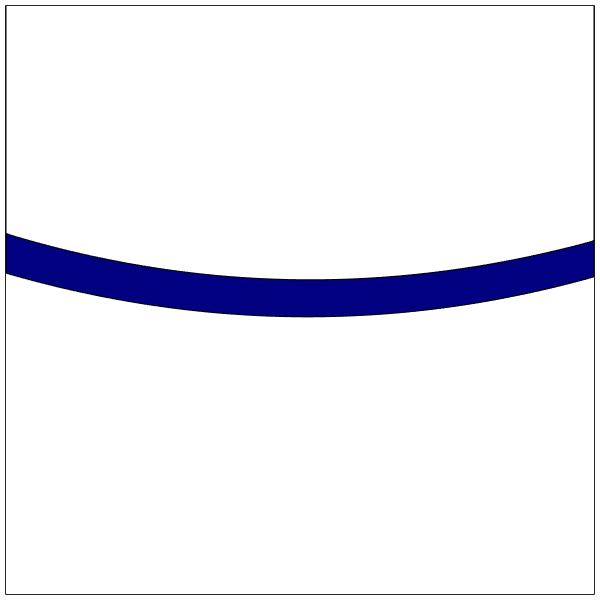}
\includegraphics[scale=0.115]{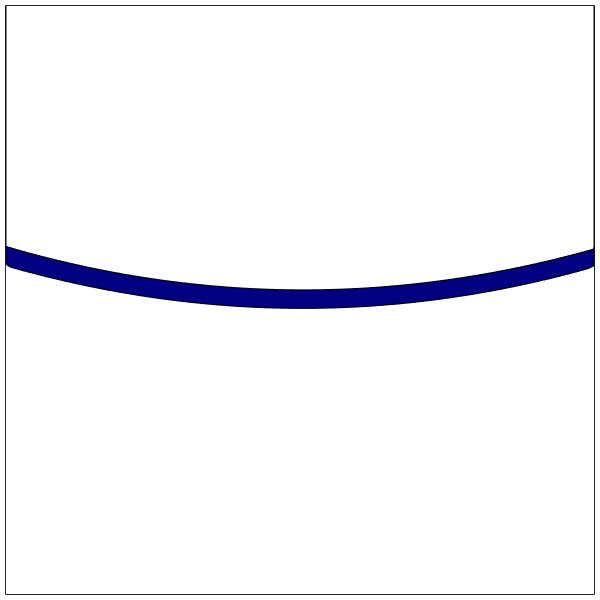}
\hbox to\hsize{\hss $n=0$ \hss\hss $n=1$ \hss\hss $n=2$ \hss\hss $n=3$ \hss\hss $n=4$ \hss\hss $n=5$ \hss}
\caption{The sets of points $w$ such that $w,\varphi^{-1}(w),\dots,\varphi^{(-n)}(w)\in\overline B_\infty(0,1)$.
By~\eqref{e:stun-char}, in the limit $n\to\infty$ we obtain $W_u$.}
\label{f:dynch}
\end{figure}

Part~(1) of Theorem~\ref{t:stun-2} follows by iteration (putting $y:=w$, $\tilde y:=0$) from
\begin{lemm}
  \label{l:dyndyn-1}
Let $y,\tilde y\in W_u$ and put $x:=\varphi^{-1}(y)$, $\tilde x:=\varphi^{-1}(\tilde y)$.
Then (see Figure~\ref{f:exc-1})
\begin{equation}
  \label{e:dyndyn-1}
|x-\tilde x|\leq \Big({1\over 2}+C\delta\Big)|y-\tilde y|.
\end{equation}
\end{lemm}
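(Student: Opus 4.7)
\noindent\textbf{Proof plan for Lemma~\ref{l:dyndyn-1}.}
The plan is to reduce the estimate to the one-dimensional expansion of $G_1$ along $W_u$, using that $W_u$ is a graph with very small slope.

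First I would argue that $x$ and $\tilde x$ themselves lie on $W_u$. By the invariance relation $\varphi(W_u)\cap \overline B_\infty(0,1)=W_u$ established in Theorem~\ref{t:stun-1}, each $y\in W_u$ admits a (necessarily unique, since $\varphi$ is a diffeomorphism) preimage on $W_u$, which must be $\varphi^{-1}(y)=x$. Hence I may write
\[
x=(x_1,F_u(x_1)),\quad \tilde x=(\tilde x_1,F_u(\tilde x_1)),\quad y=(y_1,F_u(y_1)),\quad \tilde y=(\tilde y_1,F_u(\tilde y_1))
\]
with $x_1,\tilde x_1,y_1,\tilde y_1\in[-1,1]$, and by Lemma~\ref{l:gmap} the first coordinates are related by $y_1=G_1(x_1)$ and $\tilde y_1=G_1(\tilde x_1)$, where $G_1$ is built from $F_u$ as in~\eqref{e:G-12-def}.

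Second, I would use the sharp form of~\eqref{e:G-1-der}. The chain-rule computation in the proof of Lemma~\ref{l:gmap} combined with~\eqref{e:A-prop} and the bound $\|F_u\|_{C^{N,1}}\leq C\delta$ from~\eqref{e:F-u-derb} actually gives $\partial_{x_1}G_1(x_1)=2+\mathcal O(\delta)$, so by the mean value theorem
\[
|y_1-\tilde y_1|\geq (2-C\delta)|x_1-\tilde x_1|,\qquad\text{hence}\qquad |x_1-\tilde x_1|\leq \Big({1\over 2}+C\delta\Big)|y_1-\tilde y_1|
\]
for $\delta$ small enough.

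Third, I would control the second coordinates using the smallness of $F_u$: by~\eqref{e:F-u-derb},
\[
|x_2-\tilde x_2|=|F_u(x_1)-F_u(\tilde x_1)|\leq C\delta\,|x_1-\tilde x_1|,
\]
so in the $\ell_\infty$ (or Euclidean) norm on $\mathbb R^2$ we have $|x-\tilde x|=|x_1-\tilde x_1|$ (the first coordinate dominates) while $|y-\tilde y|\geq |y_1-\tilde y_1|$. Combining with the previous step yields
\[
|x-\tilde x|=|x_1-\tilde x_1|\leq \Big({1\over 2}+C\delta\Big)|y_1-\tilde y_1|\leq \Big({1\over 2}+C\delta\Big)|y-\tilde y|,
\]
which is~\eqref{e:dyndyn-1}. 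There is no real obstacle here; the only point requiring care is the upgrade of the crude inequality $\partial_{x_1}G_1\geq 3/2$ used earlier to the sharp value $2+\mathcal O(\delta)$, which is available precisely because $\|F_u\|_{C^1}=\mathcal O(\delta)$ makes the cross term $\partial_{x_2}\varphi_1\cdot\partial_{x_1}F_u$ a product of two $\mathcal O(\delta)$ factors.
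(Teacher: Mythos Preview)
Your proof is correct and follows essentially the same route as the paper's: both reduce to the first coordinate via the graph representation, use $\partial_{x_1}G_1=2+\mathcal O(\delta)$ (which, note, is already established in~\eqref{e:G-1-der} for any $F$ with $|\partial_{x_1}F|\leq 1$, so no ``upgrade'' via $\|F_u\|_{C^1}=\mathcal O(\delta)$ is actually needed), and then absorb the second-coordinate contribution using the small slope of $F_u$. The only cosmetic difference is that the paper writes the comparison between $|x-\tilde x|$ and $|x_1-\tilde x_1|$ as an inequality with a $(1+C\delta)$ factor rather than an equality, which is the precise statement in the Euclidean norm.
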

\begin{proof}
By~\eqref{e:graph-inv} we have $x,\tilde x\in W_u\subset \overline B_\infty(0,1)$.
We write
$$
x=(x_1,F_u(x_1)),\quad
\tilde x=(\tilde x_1,F_u(\tilde x_1)),\quad
y=(y_1,F_u(y_1)),\quad
\tilde y=(\tilde y_1,F_u(\tilde y_1)).
$$
Since $\|F_u\|_{C^1}\leq C\delta$ by~\eqref{e:F-u-derb}, we have
\begin{align}
  \label{e:dyndyn-1.1}
|x-\tilde x|&\leq (1+C\delta)|x_1-\tilde x_1|,\\
  \label{e:dyndyn-1.2}
|y-\tilde y|&\geq (1-C\delta)|y_1-\tilde y_1|.
\end{align}
We have $y_1=\varphi_1(x_1,F_u(x_1))$ and $\tilde y_1=\varphi_1(\tilde x_1,F_u(\tilde x_1))$.
Thus by~\eqref{e:A-prop}
\begin{equation}
  \label{e:dyndyn-1.3}
y_1-\tilde y_1=2(x_1-\tilde x_1)+\mathcal O(\delta)|x_1-\tilde x_1|.
\end{equation}
Together~\eqref{e:dyndyn-1.1}--\eqref{e:dyndyn-1.3} give~\eqref{e:dyndyn-1}.
\end{proof}
\begin{figure}
\includegraphics[scale=0.35]{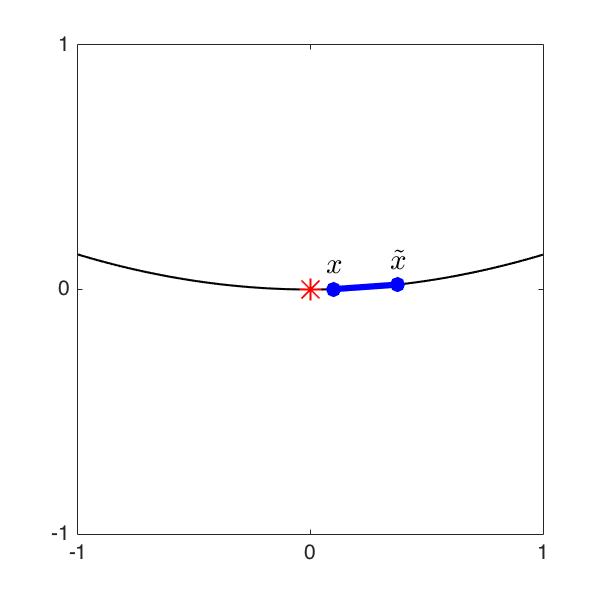}
\includegraphics[scale=0.35]{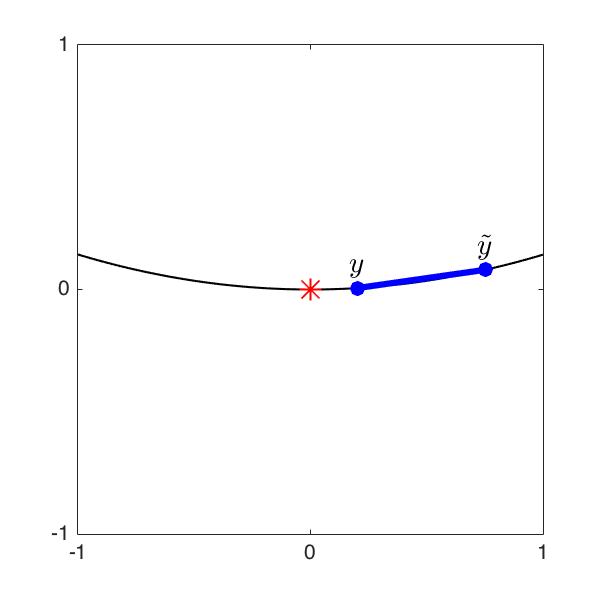}
\caption{The points $x,\tilde x,y,\tilde y$ from Lemma~\ref{l:dyndyn-1}.
The curve is $W_u$.}
\label{f:exc-1}
\end{figure}
It remains to show part~(3) of Theorem~\ref{t:stun-2}. For a point
$w=(w_1,w_2)\in \overline B_\infty(0,1)$, define the distance from it to the unstable manifold by
\begin{equation}
  \label{e:unstable-distance}
d(w,W_u):=|w_2-F_u(w_1)|.
\end{equation}
The key component of the proof is
\begin{lemm}
  \label{l:dyndyn-2}
Assume that $w\in \overline B_\infty(0,1)$ and $\varphi(w)\in\overline B_\infty(0,1)$.
Then (see Figure~\ref{f:exc-2})
\begin{equation}
  \label{e:dyndyn-2}
d(\varphi(w),W_u)\leq \Big({1\over 2}+C\delta\Big)d(w,W_u).
\end{equation}
\end{lemm}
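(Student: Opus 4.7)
The plan is to reduce the desired estimate to a mean value argument by constructing a ``shadow'' point $\tilde w\in W_u$ whose image under $\varphi$ lies exactly on $W_u$ and has the same first coordinate as $\varphi(w)$; once this is done, the hyperbolicity of $d\varphi(0)$ will produce the factor ${1\over 2}+\mathcal O(\delta)$ on the vertical distance.

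Set $y:=\varphi(w)=(y_1,y_2)$; since $\varphi(w)\in \overline B_\infty(0,1)$ we have $y_1\in[-1,1]$. I would first invoke the fixed-point identity $\Phi_uF_u=F_u$, together with the construction of $\Phi_u$ in Lemma~\ref{l:gmap}, to produce $\tilde x_1:=G_1^{-1}(y_1)\in[-1,1]$ (with $G_1$ formed from $F=F_u$) such that $\tilde w:=(\tilde x_1,F_u(\tilde x_1))\in W_u$ satisfies $\varphi_1(\tilde w)=y_1=\varphi_1(w)$ and $\varphi_2(\tilde w)=F_u(y_1)$. This gives the key reduction
$$
d(\varphi(w),W_u)=|y_2-F_u(y_1)|=|\varphi_2(w)-\varphi_2(\tilde w)|,
$$
so the problem becomes one of comparing $w$ and $\tilde w$.

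Next I would apply the mean value theorem to $\varphi_1(w)-\varphi_1(\tilde w)=0$ using the derivative bounds $\partial_{x_1}\varphi_1=2+\mathcal O(\delta)$ and $\partial_{x_2}\varphi_1=\mathcal O(\delta)$ from~\eqref{e:A-prop}. This should give
$$
|w_1-\tilde x_1|\leq C\delta\,|w_2-F_u(\tilde x_1)|.
$$
Combined with the Lipschitz bound $\|F_u\|_{C^1}\leq C\delta$ from~\eqref{e:F-u-derb}, which lets me replace $F_u(\tilde x_1)$ by $F_u(w_1)$ up to an error $C\delta|w_1-\tilde x_1|$, I would absorb the small error to obtain $|w_2-F_u(\tilde x_1)|\leq (1+C\delta)\,d(w,W_u)$ and therefore $|w_1-\tilde x_1|\leq C\delta\,d(w,W_u)$.

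Finally, to conclude, I would apply the mean value theorem to $\varphi_2(w)-\varphi_2(\tilde w)$ using $\partial_{x_1}\varphi_2=\mathcal O(\delta)$ and $\partial_{x_2}\varphi_2={1\over 2}+\mathcal O(\delta)$. The stable eigenvalue contributes $({1\over 2}+\mathcal O(\delta))|w_2-F_u(\tilde x_1)|$, while the cross term $\mathcal O(\delta)|w_1-\tilde x_1|$ is absorbed by the previous step, yielding~\eqref{e:dyndyn-2}. The main obstacle is conceptual rather than computational: one must recognize that the invariance $\varphi(W_u)\cap\{|x_1|\leq 1\}=W_u$ established in~\S\ref{s:base-3} is exactly what provides the auxiliary point $\tilde w$ that aligns horizontally with $\varphi(w)$; once $\tilde w$ is in hand, everything reduces to the routine linearization carried out above.
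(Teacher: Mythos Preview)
Your argument is correct, but it differs from the paper's in the choice of auxiliary point on $W_u$. The paper projects \emph{before} applying $\varphi$: it sets $x:=(w_1,F_u(w_1))\in W_u$ (same first coordinate as $w$), so that $w-x$ is purely vertical with $|w-x|=d(w,W_u)$; then it applies the linearization~\eqref{e:A-prop} to $z-y:=\varphi(w)-\varphi(x)$, and uses $\|F_u\|_{C^1}\leq C\delta$ only at the end to pass from $F_u(y_1)$ to $F_u(z_1)$. You instead project \emph{after} applying $\varphi$: your $\tilde w$ is chosen so that $\varphi(\tilde w)$ and $\varphi(w)$ have the same first coordinate, which makes $d(\varphi(w),W_u)=|\varphi_2(w)-\varphi_2(\tilde w)|$ exact but forces you to first estimate the horizontal offset $|w_1-\tilde x_1|$ via the equation $\varphi_1(w)=\varphi_1(\tilde w)$. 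Both routes are short and rely on the same ingredients (the $2\times 2$ hyperbolic structure of $d\varphi$ and the $C^1$ smallness of $F_u$); the paper's version saves one absorption step because $w-x$ has no horizontal component, while your version makes the role of the invariance $\Phi_uF_u=F_u$ more explicit.
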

\begin{proof}
We write
$$
w=(w_1,w_2),\quad
z:=\varphi(w)=(z_1,z_2).
$$
Define
$$
x:=(w_1,F_u(w_1)),\quad
y:=\varphi(x)=(y_1,F_u(y_1)).
$$
(Since it might happen that $y_1\notin [-1,1]$, strictly speaking
we extend the function $F_u$ to a larger interval by making $\varphi(W_u)$
the graph of the extended $F_u$. The resulting function
still satisfies the bound~\eqref{e:F-u-derb}.)

By~\eqref{e:A-prop} we have
$$
z-y=\begin{pmatrix}2 & 0 \\ 0 & 1/2\end{pmatrix}(w-x)+\mathcal O(\delta)|w-x|.
$$
Since $|w-x|=d(w,W_u)$ this implies
\begin{align}
  \label{e:frap-1}
z_1-y_1&=\mathcal O(\delta)d(w,W_u),\\
  \label{e:frap-2}
z_2-F_u(y_1)&={1\over 2}(w_2-F_u(w_1))+\mathcal O(\delta)d(w,W_u).
\end{align}
It follows from~\eqref{e:F-u-derb} and~\eqref{e:frap-1} that
$$
F_u(z_1)-F_u(y_1)=\mathcal O(\delta)d(w,W_u).
$$
From here and~\eqref{e:frap-2} we obtain
$$
z_2-F_u(z_1)={1\over 2}(w_2-F_u(w_1))+\mathcal O(\delta)d(w,W_u).
$$
Since $d(w,W_u)=|w_2-F_u(w_1)|$ and $d(z,W_u)=|z_2-F_u(z_1)|$ this
implies~\eqref{e:dyndyn-2}.
\end{proof}
\begin{figure}
\includegraphics[scale=0.35]{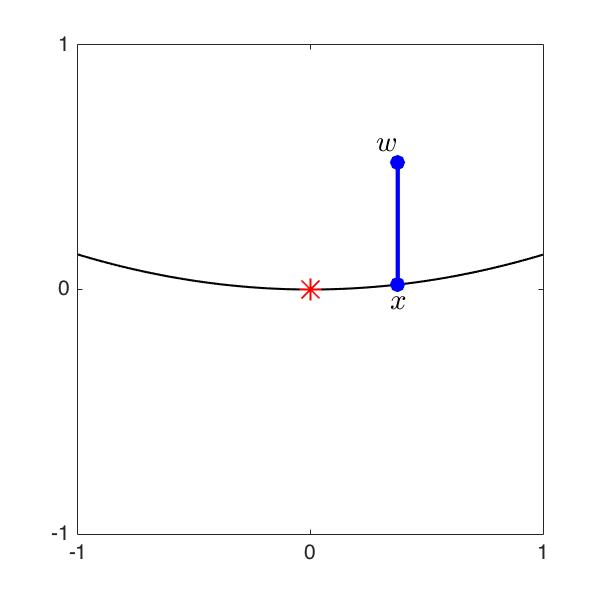}
\includegraphics[scale=0.35]{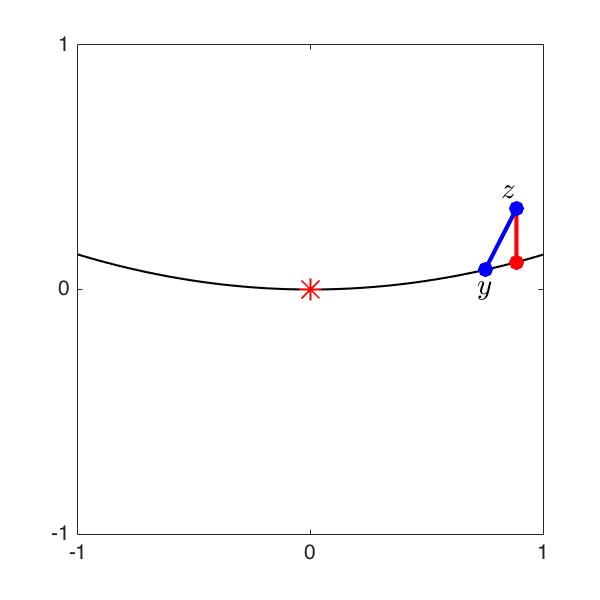}
\caption{An illustration of Lemma~\ref{l:dyndyn-2}. The curve
is $W_u$ and the picture on the right is the image
of the picture on the left under $\varphi$. The blue
segment on the left has length $d(w,W_u)$
and the red segment on the right has length $d(z,W_u)$.}
\label{f:exc-2}
\end{figure}
We now finish the proof of part~(3) of Theorem~\ref{t:stun-2}.
Assume that $w\in \overline B_\infty(0,1)$ and
$$
w^{(n)}:=\varphi^{-n}(w)\in \overline B_\infty(0,1)\quad\text{for all }n\geq 0.
$$
Since $w^{(n-1)}=\varphi(w^{(n)})$,
by Lemma~\ref{l:dyndyn-2} for small enough $\delta$ we have
\begin{equation}
  \label{e:dd3-1}
d(w^{(n-1)},W_u)\leq {2\over 3}d(w^{(n)},W_u).
\end{equation}
Since $d(w^{(n)},W_u)\leq 2$ for all $n\geq 0$, we iterate this to get
\begin{equation}
  \label{e:dd3-2}
d(w,W_u)\leq 2\cdot \Big({2\over 3}\Big)^nd(w^{(n)},W_u)\leq \Big({2\over 3}\Big)^n\quad\text{for all }n\geq 0
\end{equation}
which implies $d(w,W_u)=0$ and thus $w\in W_u$.

\Remark The above proof in fact gives stronger versions of parts~(3) and~(4) of Theorem~\ref{t:stun-2}:
if $0\leq \sigma\leq 1$ and $n\geq 0$, then
\begin{align}
  \label{e:redoer-1}
w,\varphi^{-1}(w),\dots,\varphi^{-n}(w)\in \overline B_\infty(0,\sigma)\quad\Longrightarrow\quad
d(w,W_u)\leq (2/3)^n\cdot 2\sigma,\\
  \label{e:redoer-2}
w,\varphi(w),\dots,\varphi^{n}(w)\in \overline B_\infty(0,\sigma)\quad\Longrightarrow\quad
d(w,W_s)\leq (2/3)^n\cdot 2\sigma.
\end{align}
Here we define $d(w,W_s):=|w_1-F_s(w_2)|$ similarly to~\eqref{e:unstable-distance}.

Another version of~\eqref{e:redoer-1}, \eqref{e:redoer-2} is available using the following estimate:
\begin{equation}
  \label{e:redoer-3}
\begin{aligned}
|w|\leq 4\big(d(w,W_u)+d(w,W_s)\big)\quad\text{for all}\quad w\in \overline B_\infty(0,1).
\end{aligned}
\end{equation}
To prove~\eqref{e:redoer-3} we use~\eqref{e:F-u-derb} and~\eqref{e:F-comp-contra}:
$$
\begin{aligned}
{1\over 2}|w_1|&\leq |w_1-F_s(F_u(w_1))|\leq
|w_1-F_s(w_2)|+|F_s(w_2)-F_s(F_u(w_1))|
\\&\leq |w_1-F_s(w_2)|+|w_2-F_u(w_1)|=d(w,W_s)+d(w,W_u)
\end{aligned}
$$
and $|w_2|$ is estimated similarly.

Combining~\eqref{e:redoer-1}--\eqref{e:redoer-3} we get the following bound:
for $n,r\geq 0$ and $0\leq\sigma\leq 1$
\begin{equation}
  \label{e:redoer-4}
\begin{gathered}
\varphi^{-n}(w),\dots,\varphi^{-1}(w),w,\varphi(w),\dots,\varphi^r(w)\in \overline B_\infty(0,\sigma)\\\Longrightarrow\quad
|w|\leq \big((2/3)^n+(2/3)^r\big)\cdot 8\sigma.
\end{gathered}
\end{equation}
The bound~\eqref{e:redoer-4} can be interpreted as `long time strict convexity': if the trajectory
of a point $w$ stays in the ball $\overline B_\infty(0,1)$ for long positive and negative times,
then $w$ is close to 0.

\section{The general setting}
  \label{s:general}

In this section we explain how to extend the proofs of Theorems~\ref{t:stun-1}
and~\ref{t:stun-2} to general families of hyperbolic transformations, yielding
the general Theorem~\ref{t:stun-adv} stated in~\S\ref{s:reduce-5}.
Rather than give a complete formal proof we explain below
several generalizations of Theorems~\ref{t:stun-1} and~\ref{t:stun-2} which together
give Theorem~\ref{t:stun-adv}. We refer the reader to~\cite[Theorem~6.2.8]{KaHa}
for a detailed proof.

\subsection{Making $\delta$ small by rescaling}
  \label{s:reduce-1}

We first show how to arrange for the assumption~(3) in~\S\ref{s:base-1}
(that is, $\varphi$ being close to its linearization $d\varphi(0)$) to hold
by a rescaling argument. Assume that $\varphi:U_\varphi\to V_\varphi$ is
a $C^{N+1}$ map and it satisfies assumptions~(1)--(2) in~\S\ref{s:base-1}, namely
\begin{equation}
  \label{e:stay-put}
\varphi(0)=0,\quad
d\varphi(0)=\begin{pmatrix} 2 & 0 \\ 0 & 1/2\end{pmatrix}.
\end{equation}
Fix small $\delta_1>0$ and consider the rescaling map
$$
T:\overline B_\infty(0,1)\to \overline B_\infty(0,\delta_1),\quad
T(x)=\delta_1 x.
$$
We conjugate $\varphi$ by $T$ to get the map
$$
\widetilde\varphi:=T^{-1}\circ\varphi\circ T:\overline B_\infty (0,1)\to \mathbb R^2.
$$
The map $\widetilde\varphi$ still satisfies~\eqref{e:stay-put}, and its higher derivatives
are given by
$$
\partial^\alpha\widetilde\varphi(x)=\delta_1^{|\alpha|-1}\partial^\alpha\varphi(\delta_1 x).
$$
Therefore, $\widetilde\varphi$ satisfies the assumption~(3) with $\delta=C\delta_1$,
where $C$ depends on $\varphi$. By the Inverse Mapping Theorem, $\widetilde\varphi$
also satisfies the assumption~(4); that is, it is a diffeomorphism $\widetilde U_\varphi\to \widetilde V_\varphi$
for some open sets $\widetilde U_\varphi,\widetilde V_\varphi$ containing $\overline B_\infty(0,1)$.

It follows that for $\delta_1$ small enough (depending on $\varphi$) Theorems~\ref{t:stun-1}
and~\ref{t:stun-2} apply to $\widetilde\varphi$, giving the unstable/stable manifolds
$\widetilde W_u,\widetilde W_s$. The manifolds
$$
W_{u,\delta_1}:=T(\widetilde W_u),\quad
W_{s,\delta_1}:=T(\widetilde W_s)
$$
satisfy the conclusions of Theorems~\ref{t:stun-1} and~\ref{t:stun-2} for $\varphi$
with the ball $\overline B_\infty(0,1)$ replaced by $\overline B_\infty(0,\delta_1)$.
We call these the ($\delta_1$-)\emph{local unstable/stable manifolds}
of $\varphi$ at~$0$.

\subsection{General expansion/contraction rates}
  \label{s:reduce-2}

We next explain why Theorems~\ref{t:stun-1}--\ref{t:stun-2} hold if the condition~\eqref{e:stay-put}
is replaced by
\begin{equation}
  \label{e:stay-put-2}
\varphi(0)=0,\quad
d\varphi(0)=\begin{pmatrix} \mu & 0 \\ 0 & \lambda\end{pmatrix}\quad\text{where }
0<\lambda<1<\mu\text{ are fixed.}
\end{equation}
Note that the value of $\delta$ for which Theorems~\ref{t:stun-1} and~\ref{t:stun-2} apply
will depend on $\lambda,\mu$, in particular it will go to~0 if $\lambda\to 1$
or $\mu\to 1$.

The proofs in~\S\ref{s:base} apply with the following changes:
\begin{itemize}
\item in the proof of Lemma~\ref{l:gmap}, \eqref{e:G-1-der} is replaced by
$$
\partial_{x_1}G_1(x_1)=\mu+\mathcal O(\delta)>1\quad\text{for all }x_1\in [-1,1];
$$
\item in Lemma~\ref{l:derform}, we have
$$
L_k(x_1,\tau_0,\dots,\tau_k)=\lambda\mu^{-k}\tau_k+\mathcal O(\delta)
$$
and the estimate~\eqref{e:derform-2} is changed accordingly;
\item in the estimates~\eqref{e:derest-1.1}, \eqref{e:derest-1.2},
and~\eqref{e:derest-2} the constant ${1\over 4}$ is replaced
by $\lambda\over\mu$;
\item in the contraction property~\eqref{e:contraction-achieved}
the constant $1\over 3$ is replaced by any fixed number in the interval
$({\lambda\over\mu},1)$;
\item in the estimate~\eqref{e:stun-2-1} in Theorem~\ref{t:stun-2}, as well as in Lemma~\ref{l:dyndyn-1},
the constant $1\over 2$ is replaced by~$\mu^{-1}$;
\item in the estimate~\eqref{e:stun-2-2} in Theorem~\ref{t:stun-2} the constant ${1\over 2}$ is replaced by~$\lambda$;
\item in Lemma~\ref{l:dyndyn-2} the constant ${1\over 2}$ is replaced by~$\lambda$;
\item in the estimates~\eqref{e:dd3-1} and~\eqref{e:dd3-2}, as well
as in~\eqref{e:redoer-1}, the constant ${2\over 3}$ is replaced
by any fixed number in the interval $(\lambda,1)$;
\item in~\eqref{e:redoer-2}, the constant $2\over 3$ is replaced by any fixed
number in the interval $(\mu^{-1},1)$;
\item in~\eqref{e:redoer-4}, the conclusion becomes
$|w|\leq (\tilde\lambda^n+\tilde\mu^{-r})\cdot 8\sigma$
where $\tilde\lambda\in (\lambda,1)$ and
$\tilde\mu\in (1,\mu)$ are fixed.
\end{itemize}

\subsection{Higher dimensions}
  \label{s:reduce-3}

We now generalize Theorems~\ref{t:stun-1} and~\ref{t:stun-2}
to the case of higher dimensions. More precisely, consider a diffeomorphism
$$
\varphi:U_\varphi\to V_\varphi,\quad
U_\varphi,V_\varphi\subset \mathbb R^d,\quad
\overline B_\infty(0,1)\subset U_\varphi\cap V_\varphi,
$$
where $d=d_u+d_s$, we write elements of $\mathbb R^d$
as $(x_1,x_2)$ with $x_1\in \mathbb R^{d_u}$, $x_2\in\mathbb R^{d_s}$,
and (with $|\bullet|$ denoting the Euclidean norm)
\begin{equation}
  \label{e:B-infty-new}
\overline B_\infty(0,r):=\{(x_1,x_2)\in\mathbb R^d\colon \max(|x_1|,|x_2|)\leq r\}.
\end{equation}
The condition~\eqref{e:stay-put} is replaced by
\begin{equation}
\label{e:stay-put-3}
\varphi(0)=0,\quad
d\varphi(0)=\begin{pmatrix}
A_1 & 0 \\ 0 & A_2
\end{pmatrix},
\end{equation}
where $A_1:\mathbb R^{d_u}\to\mathbb R^{d_u}$,
$A_2:\mathbb R^{d_s}\to\mathbb R^{d_s}$ are linear isomorphisms satisfying
\begin{equation}
\label{e:A-j-ineq}
\|A_1^{-1}\|\leq \mu^{-1},\quad
\|A_2\|\leq \lambda,\quad
\max(\|A_1\|,\|A_2^{-1}\|)\leq C_0
\end{equation}
for some fixed constants $\lambda,\mu,C_0$ such that $0<\lambda<1<\mu$. We assume
that the bounds~\eqref{e:hypas-2} on higher derivatives still hold
for some small $\delta>0$.

The definitions~\eqref{e:stun-graphs} of the unstable/stable graphs still apply
with the following adjustments. Define the unstable/stable balls
\begin{equation}
  \label{e:stab-the-ball}
\overline B_u(0,1):=\{x_1\in\mathbb R^{d_u}\colon |x_1|\leq 1\},\quad
\overline B_s(0,1):=\{x_2\in\mathbb R^{d_s}\colon |x_2|\leq 1\}.
\end{equation}
Then for a $C^N$ function $F:\overline B_u(0,1)\to\mathbb R^{d_s}$,
its unstable graph $\mathcal G_u(F)$ is a $d_u$-dimensional submanifold (with boundary) of $\mathbb R^d$.
If instead $F:\overline B_s(0,1)\to\mathbb R^{d_u}$,
then the stable graph $\mathcal G_s(F)$ is a $d_s$-dimensional submanifold of $\mathbb R^d$.

Theorems~\ref{t:stun-1} and~\ref{t:stun-2} still hold for the map $\varphi$,
with the constant $\delta$ now depending on $d,\lambda,\mu,C_0$ and the constants
in~\eqref{e:stun-2-1} and~\eqref{e:stun-2-2} modified as in~\S\ref{s:reduce-2}.
The proofs in~\S\ref{s:base} need to be modified as follows (in addition
to the changes described in~\S\ref{s:reduce-2}):
\begin{itemize}
\item  
in Lemma~\ref{l:gmap}, the invertibility of $G_1$ and the fact
that the image of $G_1$ contains $\overline B_u(0,1)$ follow from the 
standard
(contraction mapping principle) proof of the Inverse Mapping Theorem,
with the estimate~\eqref{e:G-1-der} replaced by
$$
\|\partial_{x_1}G_1(x_1)-A_1\|=\mathcal O(\delta)\quad\text{for all }x_1\in \overline B_u(0,1);
$$
\item we use the notation $\mathbf D^k_{x_1} F(x_1)=(\partial^\alpha_{x_1}F(x_1))_{|\alpha|=k}$,
giving a vector in a finite-dimensional space $\mathcal V^k$ which is identified
with the space of homogeneous polynomials in $d_u$ variables with values in $\mathbb R^{d_s}$ using the operation
$$
\mathbf D^k_{x_1} F(x_1)\cdot v:=\sum_{|\alpha|=k}\binom{k}{\alpha}\partial^\alpha_{x_1}F(x_1)v^\alpha
=\partial_t|_{t=0}(F(x_1+tv)),\quad v\in\mathbb R^{d_u};
$$
\item we use the following norms on $\mathcal V^k$:
\begin{equation}
  \label{e:new-norms}
\|\mathbf D^k_{x_1} F(x_1)\|:=\sup\big\{|\mathbf D^k_{x_1}F(x_1)\cdot v|\colon v\in\mathbb R^{d_u},\ |v|=1\big\};
\end{equation}
\item in Lemma~\ref{l:derform}, the derivative bounds in~\eqref{e:gmap-2}
are now on $\|\mathbf D^j_{x_1} F(x_1)\|$ for $j=1,\dots,k$;
\item in Lemma~\ref{l:derform},
the derivative formula~\eqref{e:derform} is replaced
by
$$
\mathbf D^k_{x_1}(\Phi_u F)(y_1)=L_k(x_1,F(x_1),\mathbf D^1_{x_1}F(x_1),\dots,\mathbf D^k_{x_1}F(x_1)),\quad
x_1:=G_1^{-1}(y_1)
$$
where $L_k(x_1,\tau_0,\tau_1,\dots,\tau_k)\in\mathcal V^k$ is defined for
$x_1\in \overline B_{\mathbb R^{d_u}}(0,1)$, $\tau_0\in\overline B_{\mathbb R^{d_s}}(0,1)$
and $\tau_j\in \mathcal V^j$, $\|\tau_j\|\leq 1$ for $j=1,\dots, k$;
\item in Lemma~\ref{l:derform}, the approximation for $L_k$ is changed to
the following:
$$
L_k(x_1,\tau_0,\tau_1,\dots,\tau_k)\cdot v
=A_2\big(\tau_k\cdot(A_1^{-1}v)\big)+\mathcal O(\delta)\quad\text{for all }
v\in\mathbb R^{d_u},\ |v|=1
$$
and the derivative estimates~\eqref{e:derform-2} generalize naturally to the case of higher dimensions;
\item
in the proof of Lemma~\ref{l:derform}, the formula~\eqref{e:lulla} is replaced by
$$
L_1(x_1,\tau_0,\tau_1)=(A_{21}(x_1,\tau_0)+A_{22}(x_1,\tau_0)\tau_1)(A_{11}(x_1,\tau_0)+A_{12}(x_1,\tau_0)\tau_1)^{-1}
$$
where $A_{jk}(x_1,\tau_0)$ and $\tau_1$ are now matrices;
\item in the definitions~\eqref{e:C-k-norm} and~\eqref{e:C-k-1-norm}
of $\|F\|_{C^k}$ and~$\|F\|_{C^{k,1}}$ we use
the norms~\eqref{e:new-norms} for the derivatives $\mathbf D^j_{x_1}F$,
and we have the revised inequality $\|F\|_{C^{k,1}}\leq C\|F\|_{C^{k+1}}$;
\item in the proof of Lemma~\ref{l:dyndyn-1}, the equation~\eqref{e:dyndyn-1.3}
is replaced by
$$
y_1-\tilde y_1=A_1(x_1-\tilde x_1)+\mathcal O(\delta)|x_1-\tilde x_1|;
$$
\item in the proof of Lemma~\ref{l:dyndyn-2}, the equation~\eqref{e:frap-2}
is replaced by
$$
z_2-F_u(y_1)=A_2(w_2-F_u(w_1))+\mathcal O(\delta)d(w,W_u).
$$
\end{itemize}

\subsection{Iterating different transformations}
  \label{s:reduce-4}

We next discuss a generalization of Theorems~\ref{t:stun-1} and~\ref{t:stun-2}
from the case of a single map $\varphi$ to a $\mathbb Z$-indexed family of maps.
More precisely, we assume that
$$
\varphi_m:U_\varphi\to V_\varphi,\quad
m\in\mathbb Z
$$
is a family of maps each of which satisfies the assumptions in~\S\ref{s:reduce-3}
uniformly in $m$, that is the constants $\lambda,\mu,C_0,\delta$ are independent of~$m$.
The linear maps $A_1,A_2$ in~\eqref{e:stay-put-3} are allowed to depend on~$m$.

We explain how the construction of the unstable manifold in Theorem~\ref{t:stun-1}
generalizes to the case of a family of transformations. The case of stable
manifolds is handled similarly, and Theorem~\ref{t:stun-2} generalizes naturally
to this setting, see~\S\ref{s:reduce-5} below.

Instead of a single function $F_u$ we construct
a family of functions
$$
F^u_m:\overline B_u(0,1)\to\overline B_s(0,1),\quad
m\in\mathbb Z;\quad
F^u_m(0)=0,\quad
dF^u_m(0)=0.
$$
Denote the graphs $W^u_m:=\mathcal G_u(F^u_m)$. Then the invariance property~\eqref{e:graph-inv}
generalizes to
\begin{equation}
  \label{e:graph-inv-2}
\varphi_m(W^u_m)\cap \overline B_\infty(0,1)=W^u_{m+1}.
\end{equation}
To construct $F^u_m$ we use the graph transform $\Phi^u_m$ of the map $\varphi_m$
defined in Lemma~\ref{l:gmap}. This transform satisfies the derivative bounds
of Lemmas~\ref{l:derest-1}--\ref{l:derest-2} uniformly in $m$.

As in~\S\ref{s:base-3}, to show~\eqref{e:graph-inv-2} it suffices to construct $F^u_m$ such that
for all $m\in\mathbb Z$,
\begin{equation}
  \label{e:graph-inv-2.1}
F^u_m\in\mathcal X_N,\quad
\Phi^u_mF^u_m=F^u_{m+1}.
\end{equation}
To do this we modify the argument of~\S\ref{s:base-3} as follows:
consider the space
$$
\mathcal X_N^{\mathbb Z}:=\{(F_m)_{m\in\mathbb Z}\mid F_m\in\mathcal X_N\text{ for all }m\in\mathbb Z\}
$$
with the metric
$$
d_N^{\mathbb Z}\big((F_m),(\widetilde F_m)\big):=\sup_{m\in\mathbb Z}d_N(F_m,\widetilde F_m).
$$
Then $(\mathcal X_N^{\mathbb Z},d_N^{\mathbb Z})$ is a complete metric space.
Consider the map on $\mathcal X_N^{\mathbb Z}$
$$
\Phi_u^{\mathbb Z}:(F_m)\mapsto (\widehat F_m),\quad
\widehat F_{m+1}:=\Phi^u_mF_m.
$$
It follows from Lemma~\ref{l:derest-2} that $\Phi_u^{\mathbb Z}$
is a contracting map on $(\mathcal X_N^{\mathbb Z},d_N^{\mathbb Z})$. Applying
the Contraction Mapping Principle, we obtain a fixed point
$$
(F^u_m)\in\mathcal X_N^{\mathbb Z},\quad
\Phi_u^{\mathbb Z}(F^u_m)=(F^u_m)
$$
which satisfies~\eqref{e:graph-inv-2.1}, finishing the proof.

\subsection{The general Stable/Unstable Manifold Theorem}
  \label{s:reduce-5}

We finally combine the generalizations in~\S\S\ref{s:reduce-2}--\ref{s:reduce-4}
and state the general version of the Stable/Unstable Manifold Theorem. We assume that:
\begin{enumerate}
\item $d=d_u+d_s$, $d_u,d_s\geq 0$, elements of $\mathbb R^d$ are written
as $(x_1,x_2)$ where $x_1\in\mathbb R^{d_u}$, $x_2\in\mathbb R^{d_s}$, we use the Euclidean norm
on $\mathbb R^d$, and $\overline B_\infty(0,1)$, $\overline B_u(0,1)$,
$\overline B_s(0,1)$ are defined by~\eqref{e:B-infty-new}, \eqref{e:stab-the-ball};
\item we are given a family of $C^{N+1}$ diffeomorphisms (here $N\geq 1$ is fixed)
\begin{equation}
  \label{e:gst-1}
\varphi_m:U_\varphi\to V_\varphi,\quad
m\in\mathbb Z;\quad
U_\varphi,V_\varphi\subset\mathbb R^d,\quad
\overline B_\infty(0,1)\subset U_\varphi\cap V_\varphi;
\end{equation}
\item we have for all $m\in\mathbb Z$
\begin{equation}
  \label{e:gst-2}
\varphi_m(0)=0,\quad
d\varphi_m(0)(x_1,x_2)=(A_{1,m}x_1,A_{2,m}x_2)
\end{equation}
where the linear maps $A_{1,m}:\mathbb R^{d_u}\to\mathbb R^{d_u}$,
$A_{2,m}:\mathbb R^{d_s}\to\mathbb R^{d_s}$ satisfy
\begin{equation}
  \label{e:gst-3}
\|A_{1,m}^{-1}\|\leq \mu^{-1},\quad
\|A_{2,m}\|\leq \lambda,\quad
\max(\|A_{1,m}\|,\|A_{2,m}^{-1}\|)\leq C_0
\end{equation}
for some constants $0<\lambda<1<\mu$, $C_0> 0$;
\item we have the derivative bounds for some $\delta>0$
\begin{equation}
  \label{e:gst-4}
\sup_{U_\varphi}|\partial^\alpha \varphi_m|\leq \delta\quad\text{for all}\quad
m\in\mathbb Z,\quad
2\leq|\alpha|\leq N+1.
\end{equation}
\end{enumerate}
Note that the stable/unstable spaces at~$0$ are now given by
$$
E_u(0):=\{(x_1,0)\mid x_1\in\mathbb R^{d_u}\},\quad
E_s(0):=\{(0,x_2)\mid x_2\in\mathbb R^{d_s}\}.
$$
The general form of Theorems~\ref{t:stun-1} and~\ref{t:stun-2} is then
\begin{theo}
  \label{t:stun-adv}
There exists $\delta>0$ depending only on $d,N,\lambda,\mu,C_0$ such that
the following holds. Assume that $\varphi_m$ satisfy assumptions~(1)--(4) of the present section.
Then there exist families of $C^N$ functions
\begin{equation}
  \label{e:fufsy}
\begin{aligned}
F^u_m:\overline B_u(0,1)\to\overline B_s(0,1),&\quad
F^s_m:\overline B_s(0,1)\to\overline B_u(0,1),\\
F^u_m(0)=0,\quad
dF^u_m(0)=0,&\quad
F^s_m(0)=0,\quad
dF^s_m(0)=0
\end{aligned}
\end{equation}
bounded in $C^N$ uniformly in $m$ and such that the graphs
\begin{equation}
  \label{e:wuwsy}
\begin{aligned}
W^u_m&:=\{(x_1,x_2)\colon |x_1|\leq 1,\ x_2=F^u_m(x_1)\},\\
W^s_m&:=\{(x_1,x_2)\colon |x_2|\leq 1,\ x_1=F^s_m(x_2)\}
\end{aligned}
\end{equation}
have the following properties for all $m\in\mathbb Z$:
\begin{enumerate}
\item $\varphi_m^{-1}(W^u_{m+1})\subset W^u_m$ and $\varphi_m(W^s_m)\subset W^s_{m+1}$, more precisely
\begin{equation}
  \label{e:stuna-1}
\varphi_m(W^u_m)\cap \overline B_\infty(0,1)=W^u_{m+1},\quad
\varphi_m^{-1}(W^s_{m+1})\cap \overline B_\infty(0,1)=W^s_m;
\end{equation}
\item $W^u_m\cap W^s_m=\{0\}$;
\item for each $x\in W^u_m$ we have $\varphi_{m-n}^{-1}\cdots\varphi_{m-1}^{-1}(x)\to 0$
 as $n\to\infty$;
\item for each $x\in W^s_m$ we have $\varphi_{m+n-1}\cdots\varphi_m(x)\to 0$ as $n\to\infty$;
\item if $\varphi_{m-n}^{-1}\cdots\varphi_{m-1}^{-1}(x)\in \overline B_\infty(0,1)$ for all $n\geq 0$, then
$x\in W^u_m$;
\item if $\varphi_{m+n-1}\cdots\varphi_m(x)\in\overline B_\infty(0,1)$ for all $n\geq 0$,
then $x\in W^s_m$.
\end{enumerate}
\end{theo}
\Remarks
1. Similarly to~\eqref{e:stun-char}, we obtain the following dynamical
definition of the manifolds $W^u_m,W^s_m$:
\begin{equation}
  \label{e:stun-char-2}
\begin{aligned}
x\in W^u_m&\quad\Longleftrightarrow\quad
\varphi_{m-n}^{-1}\cdots\varphi_{m-1}^{-1}(x)\in \overline B_\infty(0,1)\quad\text{for all }n\geq 0;\\
x\in W^s_m&\quad\Longleftrightarrow\quad
\varphi_{m+n-1}\cdots\varphi_{m}(x)\in \overline B_\infty(0,1)\quad\text{for all }n\geq 0.
\end{aligned}
\end{equation}

\noindent 2. Similarly to Theorem~\ref{t:stun-2} and Lemma~\ref{l:dyndyn-1}, parts~(3) and~(4) of Theorem~\ref{t:stun-adv}
can be made quantitative as follows. Fix $\tilde\lambda,\tilde\mu$ such that
\begin{equation}
  \label{e:tildes}
0<\lambda<\tilde\lambda<1<\tilde\mu<\mu.
\end{equation}
Then for $\delta$ small enough depending on $d,\lambda,\tilde\lambda,\mu,\tilde\mu,C_0$ and
all $m\in\mathbb Z$, $n\geq 0$ we have
\begin{equation}
  \label{e:stuna-2}
\begin{aligned}
x,\tilde x\in W^u_m&\quad\Longrightarrow\quad
|\varphi_{m-n}^{-1}\cdots\varphi_{m-1}^{-1}(x)-
\varphi_{m-n}^{-1}\cdots\varphi_{m-1}^{-1}(\tilde x)|\leq \tilde\mu^{-n}|x-\tilde x|,\\
x,\tilde x\in W^s_m&\quad\Longrightarrow\quad
|\varphi_{m+n-1}\cdots\varphi_{m}(x)-
\varphi_{m+n-1}\cdots\varphi_{m}(\tilde x)|\leq \tilde\lambda^{n}|x-\tilde x|.
\end{aligned}
\end{equation}

\noindent 3. Similarly to the remark at the end of~\S\ref{s:base-4}, there is a quantitative
version of parts~(5) and~(6) of Theorem~\ref{t:stun-adv} as well. Namely, if
$\tilde\lambda,\tilde\mu$ satisfy~\eqref{e:tildes} and 
$\delta$ is small enough depending on $d,\lambda,\tilde\lambda,\mu,\tilde\mu,C_0$, then
for all $0\leq\sigma\leq 1$ and $n\geq 0$
\begin{equation}
  \label{e:stuna-3}
\begin{aligned}
\varphi_{m-\ell}^{-1}\cdots\varphi_{m-1}^{-1}(x)\in \overline B_\infty(0,\sigma),\quad\ell=0,1,\dots,n
&\quad\Longrightarrow\quad
d(x,W_u)\leq \tilde\lambda^n\cdot 2\sigma;\\
\varphi_{m+\ell-1}\cdots\varphi_{m}(x)\in \overline B_\infty(0,\sigma),\quad\ell=0,1,\dots,n
&\quad\Longrightarrow\quad
d(x,W_s)\leq \tilde\mu^{-n}\cdot 2\sigma.
\end{aligned}
\end{equation}
The following analog of the estimate~\eqref{e:redoer-4} holds:
for all $n,r\geq 0$ and $0\leq\sigma\leq 1$
\begin{equation}
  \label{e:stuna-4}
\begin{gathered}
\text{if}\quad
\varphi_{m-\ell}^{-1}\cdots\varphi_{m-1}^{-1}(x)\in \overline B_\infty(0,\sigma),\quad \ell=0,1,\dots,n\\
\text{and}\quad
\varphi_{m+\ell-1}\cdots\varphi_{m}(x)\in \overline B_\infty(0,\sigma),\quad\ell=0,1,\dots,r\\
\text{then}\quad
|x|\leq \big(\tilde\lambda^n+\tilde\mu^{-r}\big)\cdot 8\sigma.
\end{gathered}
\end{equation}

\noindent 4. The rescaling argument of~\S\ref{s:reduce-1} applies to the setting
of Theorem~\ref{t:stun-adv}. That is, if $\varphi_m$ satisfy~\eqref{e:gst-1}--\eqref{e:gst-3},
then one can make~\eqref{e:gst-4} hold by zooming in to a sufficiently small neighborhood of the origin.

\subsection{The case of expansion/contraction rate~1}
  \label{s:reduce-6}

For applications to hyperbolic flows (which have a neutral direction) we also
need to discuss which parts of Theorem~\ref{t:stun-adv} still hold when either
$\lambda$ or $\mu$ is equal to~1.
Specifically, we replace the condition $\lambda<1<\mu$ with
\begin{equation}
  \label{e:degrade-1}
\lambda=1<\mu.
\end{equation}
That is, there is still expansion in the unstable directions but there does not have to be
(strict) contraction in the stable directions.

The construction of the unstable manifolds $W^u_m$ applies to the case~\eqref{e:degrade-1}
without any changes, and the resulting manifolds $W^u_m$ satisfy
conclusions~(1) and~(3) of Theorem~\ref{t:stun-adv}.
(In fact, this would work under an even weaker condition $\mu>\max(1,\lambda)$.)
The estimate~\eqref{e:stuna-2} still holds for $W^u_m$, assuming that
$\tilde\mu$ satisfies $1<\tilde\mu<\mu$.

However, we cannot construct the stable manifolds $W^s_m$ under the assumption~\eqref{e:degrade-1}.
The problem is in the proof of Lemma~\ref{l:gmap}: the projection of $\varphi^{-1}(\mathcal G_s(F))$
onto the $x_2$ variable might no longer cover the unit ball,
so the function $\Phi_sF$ cannot be defined.
Moreover, conclusion~(5) of Theorem~\ref{t:stun-adv} (as well as~\eqref{e:stuna-3}) no longer holds,
so the dynamical characterization~\eqref{e:stun-char-2}
of the unstable manifolds $W^u_m$ is no longer valid.
The `strict convexity' property~\eqref{e:stuna-4} no longer holds.

Similarly, one can still construct the stable manifolds $W^s_m$ and
establish conclusions~(1) and~(4) of Theorem~\ref{t:stun-adv} for them if we replace the condition
$\lambda<1<\mu$ with
\begin{equation}
  \label{e:degrade-2}
\lambda<1=\mu.
\end{equation}

\section{Hyperbolic maps and flows}
\label{s:maps-and-flows}

In this section we apply Theorem~\ref{t:stun-adv} to obtain
the Stable/Unstable Manifold Theorem for hyperbolic maps
(Theorem~\ref{t:stun-maps} in~\S\ref{s:hyp-maps}) and for hyperbolic flows (Theorem~\ref{t:stun-flows}
in~\S\ref{s:hyp-flows}). This involves constructing an adapted
metric (Lemma~\ref{l:adapted-metric}) and taking adapted coordinates
to bring the map/flow into the model case handled by Theorem~\ref{t:stun-adv}.

\subsection{Hyperbolic maps}
  \label{s:hyp-maps}

Assume that $M$ is a $d$-dimensional manifold without boundary
and $d=d_u+d_s$ where $d_u,d_s\geq 0$. Let $\varphi:M\to M$ be a $C^{N+1}$ diffeomorphism
(here $N\geq 1$ is fixed) and assume that $\varphi$ is hyperbolic on some
compact $\varphi$-invariant set $K\subset M$ in the following sense:
\begin{defi}
\label{d:hyp-map}
Let $K\subset M$ be a compact set such that $\varphi(K)=K$. We say that $\varphi$
is \textbf{hyperbolic} on~$K$ if there exists a splitting
\begin{equation}
  \label{e:hyp-map-1}
T_x M=E_u(x)\oplus E_s(x),\quad
x\in K
\end{equation}
where $E_u(x),E_s(x)\subset T_x M$ are subspaces of dimensions $d_u,d_s$
and:
\begin{itemize}
\item $E_u,E_s$ are invariant under $d\varphi$, namely
\begin{equation}
  \label{e:hyp-map-2}
d\varphi(x)E_u(x)=E_u(\varphi(x)),\quad
d\varphi(x)E_s(x)=E_s(\varphi(x))\quad\text{for all }x\in K;
\end{equation}
\item large negative iterates of $\varphi$ are contracting
on $E_u$, namely there exist constants $C>0$, $0<\lambda<1$ such that 
for some Riemannian metric $|\bullet|$ on $M$ we have
\begin{equation}
  \label{e:hyp-map-u}
|d\varphi^{-n}(x)v|\leq C\lambda^n |v|\quad\text{for all}\quad
v\in E_u(x),\ x\in K,\ n\geq 0;
\end{equation}
\item large positive iterates of $\varphi$ are contracting on $E_s$, namely
\begin{equation}
  \label{e:hyp-map-s}
|d\varphi^{n}(x)v|\leq C\lambda^n |v|\quad\text{for all}\quad
v\in E_s(x),\ x\in K,\ n\geq 0.
\end{equation}
\end{itemize}
\end{defi}
\Remarks 1. The contraction properties~\eqref{e:hyp-map-u}, \eqref{e:hyp-map-s}
do not depend on the choice of the metric on $M$, though the constant $C$ (but not $\lambda$)
will depend on the metric. Later in Lemma~\ref{l:adapted-metric} we construct metrics
which give~\eqref{e:hyp-map-u}, \eqref{e:hyp-map-s} with $C=1$.

\noindent 2. We do not assume a priori that the maps
$x\mapsto E_u(x),E_s(x)$ are continuous. However we show in~\S\ref{s:continuity} below
that Definition~\ref{d:hyp-map} implies that these maps are in fact H\"older continuous.

\noindent 3. The basic example of a hyperbolic set is $K=\{x_0\}$ where $x_0\in M$
is a fixed point of~$\varphi$ which is hyperbolic, namely
$d\varphi(x_0)$ has no eigenvalues on the unit circle.
More generally one can take as $K$ a hyperbolic closed trajectory of $\varphi$.
The opposite situation is when $K=M$;
in this case $\varphi$ is called an \emph{Anosov diffeomorphism}.

\noindent 4. Similarly to~\S\ref{s:reduce-5} we could take two different constants
in~\eqref{e:hyp-map-u}, \eqref{e:hyp-map-s}, corresponding to different minimal contraction
rates in the unstable and the stable directions. We do not do this here to simplify
notation, and since the examples in this note have time-reversal symmetry and
thus equal stable/unstable contraction rates.

Fix
a Riemannian metric on $M$ which induces a distance function $d(\bullet,\bullet)$;
denote for $x\in M$ and $r\geq 0$
\begin{equation}
  \label{e:B-d-def}
\overline B_d(x,r):=\{y\in M\mid d(x,y)\leq r\}.
\end{equation}
We now state the Stable/Unstable Manifold Theorem for hypebolic maps:
\begin{theo}
  \label{t:stun-maps}
Assume that $\varphi$ is hyperbolic on $K\subset M$.
Then for each $x\in K$ there exist \textbf{local unstable/stable manifolds}
$$
W_u(x),W_s(x)\subset M
$$
which have the following properties for
some $\varepsilon_0>0$
depending only on $\varphi,K$:
\begin{enumerate}
\item $W_u(x),W_s(x)$ are $C^N$ embedded disks of dimensions $d_u,d_s$,
that is images of closed balls in $\mathbb R^{d_u}$, $\mathbb R^{d_s}$
under $C^N$ embeddings, and the $C^N$ norms of these embeddings
are bounded uniformly in $x$;
\item $W_u(x)\cap W_s(x)=\{x\}$ and $T_xW_u(x)=E_u(x)$, $T_x W_s(x)=E_s(x)$;
\item the boundaries of $W_u(x),W_s(x)$ do not intersect $\overline B_d(x,\varepsilon_0)$;
\item $\varphi^{-1}(W_u(x))\subset W_u(\varphi^{-1}(x))$ and $\varphi(W_s(x))\subset W_s(\varphi(x))$;
\item for each $y\in W_u(x)$, we have $d(\varphi^{-n}(y),\varphi^{-n}(x))\to 0$ as $n\to\infty$;
\item for each $y\in W_s(x)$, we have $d(\varphi^n(y),\varphi^n(x))\to 0$ as $n\to\infty$;
\item if $y\in M$ and $d(\varphi^{-n}(y),\varphi^{-n}(x))\leq \varepsilon_0$
for all $n\geq 0$, then $y\in W_u(x)$;
\item if $y\in M$ and $d(\varphi^n(y),\varphi^n(x))\leq \varepsilon_0$
for all $n\geq 0$, then $y\in W_s(x)$;
\item if $x,y\in K$ and $d(x,y)\leq\varepsilon_0$ then $W_s(x)\cap W_u(y)$ consists
of exactly one point.
\end{enumerate} 
\end{theo}
\Remarks 1. Similarly to~\eqref{e:stuna-2} there are quantitative versions of the statements~(5)
and~(6): if we fix $\tilde\lambda$ such that $\lambda<\tilde\lambda<1$ then
for all $n\geq 0$ and $x\in K$
\begin{equation}
  \label{e:cream-1}
\begin{aligned}
y,\tilde y\in W_u(x)&\quad\Longrightarrow\quad
d(\varphi^{-n}(y),\varphi^{-n}(\tilde y))\leq C\tilde\lambda^n d(y,\tilde y);\\
y,\tilde y\in W_s(x)&\quad\Longrightarrow\quad
d(\varphi^{n}(y),\varphi^{n}(\tilde y))\leq C\tilde\lambda^n d(y,\tilde y).
\end{aligned}
\end{equation}
where $C$ is a constant depending only on $\varphi,K,\tilde\lambda$.
Here and in Remark~2 below the manifolds $W_u,W_s$
and the constant $\varepsilon_0$ depend on $\tilde\lambda$,
in particular when $\tilde\lambda\to \lambda$ the stable/unstable manifolds might
degenerate to a point and $\varepsilon_0$ might go to 0.
(One can get rid of this dependence, but it is rather tedious
and typically unnecessary.)

\noindent 2. Similarly to~\eqref{e:stuna-3} there are quantitative versions of the statements~(7) and~(8)
as well: if we fix $\tilde\lambda$ as before then
for all $n\geq 0$, $0\leq\sigma\leq\varepsilon_0$, $x\in K$, and $y\in M$
\begin{equation}
  \label{e:cream-2}
\begin{aligned}
d(\varphi^{-\ell}(y),\varphi^{-\ell}(x))\leq \sigma\text{ for all }\ell=0,\dots,n
&\quad\Longrightarrow\quad
d(y,W_u(x))\leq C\tilde\lambda^n\sigma,\\
d(\varphi^{\ell}(y),\varphi^{\ell}(x))\leq \sigma\text{ for all }\ell=0,\dots,n
&\quad\Longrightarrow\quad
d(y,W_s(x))\leq C\tilde\lambda^n\sigma
\end{aligned}
\end{equation}
where $C$ is a constant depending only on $\varphi,K,\tilde\lambda$.
We also have the following analog of the `strict convexity' property~\eqref{e:stuna-4}:
for all $n\geq 0$, $0\leq\sigma\leq\varepsilon_0$, $x\in K$, and $y\in M$
\begin{equation}
  \label{e:cream-3}
d(\varphi^\ell(y),\varphi^\ell(x))\leq\sigma\quad\text{for all}\quad|\ell|\leq n
\quad\Longrightarrow\quad
d(y,x)\leq C\tilde\lambda^n\sigma.
\end{equation}

\begin{figure}
\includegraphics[scale=0.2875]{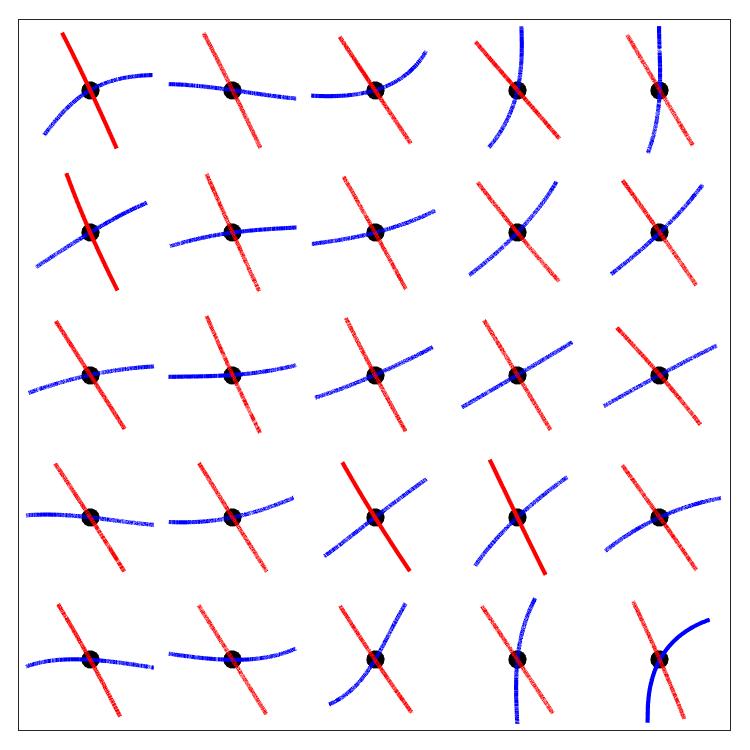}
\includegraphics[scale=0.2875]{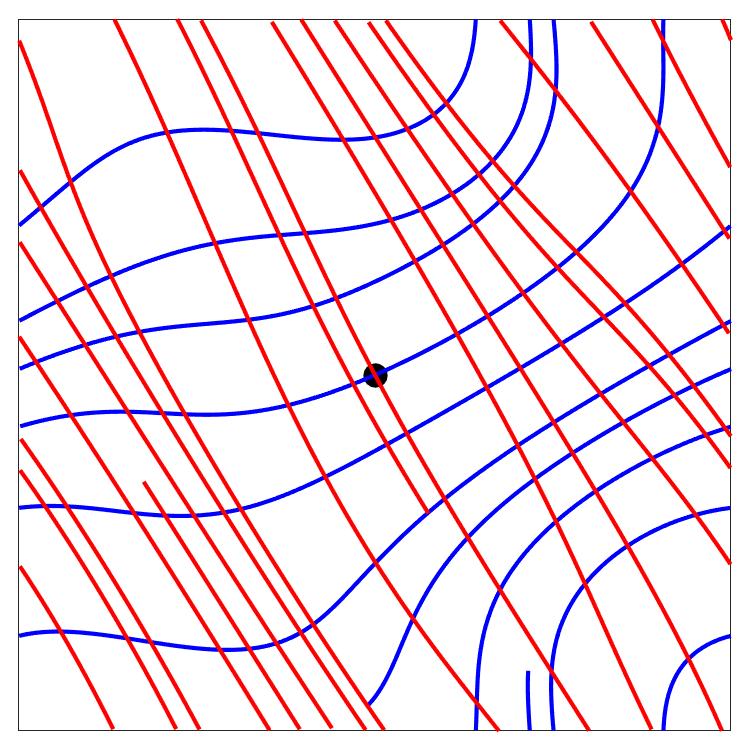}
\caption{Numerically computed unstable (blue) and stable (red) manifolds for a perturbed Arnold cat map on the torus
$\mathbb R^2/\mathbb Z^2$. On the left are the local stable/unstable manifolds $W_u(x),W_s(x)$
for several choices of $x$.
On the right are the manifolds $W_u^{(k)}(x),W_s^{(k)}(x)$ for $x=(0.5,0.5)$ and $k=4$.}
\label{f:stun-global}
\end{figure}

\noindent 3. The manifolds $W_u(x)$ are not defined canonically since they have
a somewhat arbitrarily determined boundary. For the same reason, if
$W_u(x)\cap W_u(y)\neq \emptyset$, this does not imply that $W_u(x)=W_u(y)$.
However, we can show that in this case $W_u(x)$ and $W_u(y)$ are subsets
of the same $d_u$-dimensional manifold, see~\eqref{e:intersector} below.
For $k\geq 0$ and $x\in K$ define
\begin{equation}
  \label{e:iterated-unstable}
W_u^{(k)}(x):=\varphi^k(W_u(\varphi^{-k}(x))).
\end{equation}
This is still a $d_u$-dimensional embedded disk in $M$. Moreover, by statement~(4)
in Theorem~\ref{t:stun-maps} we have
$$
W_u(x)=W_u^{(0)}(x)\subset W_u^{(1)}(x)\subset\dots\subset W_u^{(k)}(x)\subset W_u^{(k+1)}(x)\subset\dots
$$
There exists $k_0\geq 0$ such that for all $x,y\in K$
\begin{equation}
  \label{e:intersector}
W_u(x)\cap W_u(y)\neq\emptyset\quad\Longrightarrow\quad
W_u(x)\cup W_u(y)\subset W_u^{(k_0)}(x).
\end{equation}
Indeed, assume that $z\in W_u(x)\cap W_u(y)$.
By~\eqref{e:cream-1}
if $k_0$ is large enough then
$$
d(\varphi^{-k}(x),\varphi^{-k}(y))
\leq d(\varphi^{-k}(z),\varphi^{-k}(x))
+d(\varphi^{-k}(z),\varphi^{-k}(y))
\leq{\varepsilon_0\over 2}\quad\text{for all }k\geq k_0.
$$
Let $w\in W_u(x)\cup W_u(y)$. Then for $k_0$ large enough we get from~\eqref{e:cream-1}
\begin{equation}
  \label{e:prada}
d(\varphi^{-k}(w),\varphi^{-k}(x))\leq \varepsilon_0\quad\text{for all }k\geq k_0.
\end{equation}
It follows from statement~(7) in Theorem~\ref{t:stun-maps} that
$\varphi^{-k_0}(w)\in W_u(\varphi^{-k_0}(x))$ and thus
$w\in W_u^{(k_0)}(x)$, proving~\eqref{e:intersector}.

Note that~\eqref{e:intersector} implies that the tangent spaces to $W_u(x)$ are the unstable spaces:
\begin{equation}
  \label{e:tangentor}
y\in W_u(x)\cap K\quad\Longrightarrow\quad T_y(W_u(x))=E_u(y).
\end{equation}
The above discussion applies to stable manifolds where we define
\begin{equation}
  \label{e:iterated-stable}
W_s^{(k)}(x):=\varphi^{-k}(W_s(\varphi^k(x))).
\end{equation}

\noindent 4. Here is another version of `local uniqueness' of stable/unstable manifolds:
there exists $\varepsilon_1>0$ such that for all
$x,y\in K$ we have
\begin{align}
  \label{e:locally-unique-1}
W_u(x)\cap W_u(y)\neq\emptyset&\quad\Longrightarrow\quad
W_u(y)\cap \overline B_d(x,\varepsilon_1)\subset W_u(x),\\
  \label{e:locally-unique-2}
W_s(x)\cap W_s(y)\neq\emptyset&\quad\Longrightarrow\quad
W_s(y)\cap \overline B_d(x,\varepsilon_1)\subset W_s(x).
\end{align}
We show~\eqref{e:locally-unique-1}, with~\eqref{e:locally-unique-2} proved similarly.
Let $w\in W_u(y)\cap \overline B_d(x,\varepsilon_1)$. By~\eqref{e:prada}
we have $d(\varphi^{-k}(w),\varphi^{-k}(x))\leq\varepsilon_0$
for all $k\geq k_0$.
On the other hand
$$
d(\varphi^{-k}(w),\varphi^{-k}(x))\leq C d(w,x)\leq C\varepsilon_1\quad
\text{for}\quad 0\leq k <k_0.
$$
Choosing
$\varepsilon_1$ small enough we get
$d(\varphi^{-k}(w),\varphi^{-k}(x))\leq\varepsilon_0$ for all $k\geq 0$,
which by statement~(7) in Theorem~\ref{t:stun-maps}
gives $w\in W_u(x)$ as needed.

\noindent 5. One can take the unions of the manifolds~\eqref{e:iterated-unstable},
\eqref{e:iterated-stable} to obtain \emph{global unstable/stable manifolds}:
for $x\in K$,
\begin{equation}
  \label{e:global}
W_u^{(\infty)}(x):=\bigcup_{k\geq 0}W_u^{(k)}(x),\quad
W_s^{(\infty)}(x):=\bigcup_{k\geq 0}W_s^{(k)}(x).
\end{equation}
By statements (5)--(8) in Theorem~\ref{t:stun-maps} we can characterize
these dynamically as follows:
\begin{equation}
  \label{e:global-char}
\begin{aligned}
y\in W_u^{(\infty)}(x)&\quad\Longleftrightarrow\quad
d(\varphi^{-n}(y),\varphi^{-n}(x))\to 0\quad\text{as }n\to\infty;\\
y\in W_s^{(\infty)}(x)&\quad\Longleftrightarrow\quad
d(\varphi^{n}(y),\varphi^{n}(x))\to 0\quad\text{as }n\to\infty.
\end{aligned}
\end{equation}
Therefore the global stable/unstable manifolds are disjoint:
if $W_u^{(\infty)}(x)\cap W_u^{(\infty)}(y)\neq\emptyset$, then
$W_u^{(\infty)}(x)=W_u^{(\infty)}(y)$, and same is true for $W_s^{(\infty)}$.

The sets $W_u^{(\infty)}(x)$ and~$W_s^{(\infty)}(x)$ are $d_u$ and $d_s$-dimensional immersed
submanifolds without boundary in $M$, however they are typically not embedded.
In fact, in many cases these submanifolds are dense in $M$. See Figure~\ref{f:stun-global}.

\subsection{Continuity of the stable/unstable spaces}
  \label{s:continuity}

In this section we study the regularity of the maps $x\mapsto E_u(x),E_s(x)$.
To talk about these, it is convenient to introduce the Grassmanians
$$
\begin{aligned}
\mathscr G_u&:=\{(x,E)\colon x\in M,\ E\subset T_xM\text{ is a $d_u$-dimensional subspace}\},\\
\mathscr G_s&:=\{(x,E)\colon x\in M,\ E\subset T_xM\text{ is a $d_s$-dimensional subspace}\}.
\end{aligned}
$$
These are smooth manifolds fibering over $M$. If $\varphi$ is hyperbolic on $K$,
then we have the maps
\begin{equation}
  \label{e:stun-mapps}
E_u:K\to\mathscr G_u,\quad
E_s:K\to\mathscr G_s.
\end{equation}
We first show that $E_u(x),E_s(x)$ depend continuously on $x$:
\begin{lemm}
  \label{l:contin}
Assume that $\varphi$ is hyperbolic on $K$. Then
the maps~\eqref{e:stun-mapps} are continuous.
\end{lemm}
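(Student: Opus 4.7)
The plan is to use compactness of the Grassmannian together with the dynamical characterization of $E_u(x)$: a vector $v\in T_xM$ belongs to $E_u(x)$ precisely when $|d\varphi^{-n}(x)v|$ remains bounded as $n\to\infty$. I only discuss $E_u$; the argument for $E_s$ is symmetric, with $\varphi^{-1}$ in place of $\varphi$.

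Fix $x\in K$ and a sequence $x_k\in K$ with $x_k\to x$. Working in a smooth local trivialization of $TM$ near $x$, each $E_u(x_k)$ sits inside a fixed copy of $\mathbb R^d$ and the relevant Grassmannian fiber is compact; passing to a subsequence, $E_u(x_k)\to E$ for some $d_u$-dimensional subspace $E\subset T_xM$. It then suffices to show that every such limit $E$ equals $E_u(x)$, since the full sequence $E_u(x_k)$ will then converge to $E_u(x)$.

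To see $E\subset E_u(x)$, I would take a unit vector $v\in E$ together with unit vectors $v_k\in E_u(x_k)$ such that $v_k\to v$ in the chosen trivialization. For each fixed $n\ge 0$ the bound~\eqref{e:hyp-map-u} applied at $x_k$ gives $|d\varphi^{-n}(x_k)v_k|\le C\lambda^n$; letting $k\to\infty$ and using continuity of $d\varphi^{-n}$ yields $|d\varphi^{-n}(x)v|\le C\lambda^n$ for all $n\ge 0$. Next decompose $v=v_u+v_s$ via the splitting~\eqref{e:hyp-map-1}. By $d\varphi$-invariance, $d\varphi^{-n}(x)v_s\in E_s(\varphi^{-n}(x))$, and applying~\eqref{e:hyp-map-s} at base point $\varphi^{-n}(x)$ to this vector gives $|v_s|\le C\lambda^n|d\varphi^{-n}(x)v_s|$, so $|d\varphi^{-n}(x)v_s|\ge C^{-1}\lambda^{-n}|v_s|$; meanwhile $|d\varphi^{-n}(x)v_u|\le C\lambda^n|v_u|$ from~\eqref{e:hyp-map-u}. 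The reverse triangle inequality then forces $|d\varphi^{-n}(x)v|\ge C^{-1}\lambda^{-n}|v_s|-C\lambda^n|v_u|$ for large $n$, which is incompatible with the bound $|d\varphi^{-n}(x)v|\le C\lambda^n$ unless $v_s=0$. Hence $v\in E_u(x)$, and equality of dimensions gives $E=E_u(x)$.

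The main obstacle is really just bookkeeping: one has to fix once and for all a smooth local frame of $TM$ near $x$ so that the statements ``$v_k\to v$'' and ``$(y,w)\mapsto d\varphi^{-n}(y)w$ is continuous'' have unambiguous meaning across distinct fibers. Apart from that, the proof is entirely soft and uses only the existence of the splitting, its invariance, and the one-sided contraction estimates from Definition~\ref{d:hyp-map}; no a priori modulus of continuity for $E_u$ or $E_s$ is needed.
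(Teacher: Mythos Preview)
Your proof is correct and follows essentially the same approach as the paper: both arguments pass the exponential bound $|d\varphi^{\mp n}(x_k)v_k|\le C\lambda^n$ to the limit and then use the splitting together with the growth on the complementary subspace to force the limiting vector into $E_u(x)$ (resp.\ $E_s(x)$). The only cosmetic differences are that the paper treats $E_s$ rather than $E_u$ and works directly with convergent sequences of unit vectors rather than phrasing things via subsequential limits in the Grassmannian.
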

\begin{proof}
We show continuity of $E_s$, the continuity of $E_u$ is proved similarly.
It suffices to show that if
$$
x_k\in K,\quad
x_k\to x_\infty,\quad
v_k\in E_s(x_k),\quad
|v_k|=1,\quad
v_k\to v_\infty\in T_{x_\infty}M
$$
then $v_\infty\in E_s(x_\infty)$.

By~\eqref{e:hyp-map-s} we have for all $k$ and all $n\geq 0$
$$
|d\varphi^n(x_k)v_k|\leq C\lambda^n.
$$
Passing to the limit $k\to\infty$, we get for all $n\geq 0$
$$
|d\varphi^n(x_\infty)v_\infty|\leq C\lambda^n.
$$
Given~\eqref{e:hyp-map-1} and~\eqref{e:hyp-map-u} this implies
that $v_\infty\in E_s(x_\infty)$, finishing the proof.
\end{proof}
Lemma~\ref{l:contin} and the fact that
$E_u(x),E_s(x)$ are transverse for each $x$
implies that $E_u(x)$, $E_s(x)$ are uniformly transverse, namely there exists a constant $C$ such that
\begin{equation}
  \label{e:transverse}
\max\big(|v_u|,|v_s|\big)\leq C|v_u+v_s|\quad\text{for all}\quad
v_u\in E_u(x),\
v_s\in E_s(x),\
x\in K.
\end{equation}

A quantitative version of the proof of Lemma~\ref{l:contin} shows
that in fact $E_u(x),E_s(x)$ are H\"older continuous in $x$.
(This statement is not used in the rest of these notes.)
\begin{lemm}
  \label{l:holder}
Fix some smooth metrics on $\mathscr G_u,\mathscr G_s$.
Assume that $\varphi$ is hyperbolic on $K$. Then there exists $\gamma>0$
such that the maps~\eqref{e:stun-mapps} have $C^\gamma$ regularity.
\end{lemm}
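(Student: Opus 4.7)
\emph{Plan.} The strategy is to upgrade the continuity statement in Lemma~\ref{l:contin} to a quantitative one by balancing two competing effects: the differential $d\varphi^{-n}$ contracts the ``stable cone'' of $d_s$-dimensional subspaces close to $E_s$ by a factor $(\lambda/\mu)^n$ (parallel to the graph-transform contraction of Lemmas~\ref{l:derest-1}--\ref{l:derest-2}), while the base points $\varphi^n(x), \varphi^n(y)$ can separate by as much as $\Lambda^n d(x,y)$, where $\Lambda\geq 1$ uniformly bounds $\|d\varphi^{\pm 1}\|$ near $K$. Choosing $n$ to balance these effects will produce the H\"older exponent.

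\emph{Setup and cone contraction.} I would cover a neighborhood of $K$ by finitely many smooth charts, use them to identify nearby tangent spaces and trivialize $TM$, and then equip the Grassmannian bundles with the induced metrics $d_{\mathscr G_u}, d_{\mathscr G_s}$. By~\eqref{e:transverse} and Lemma~\ref{l:contin}, fix $\eta_0, r_0>0$ so that whenever $x, y \in K$ satisfy $d(x, y)\leq r_0$, the transported $E_s(y)$ is the graph, in the splitting $T_xM=E_u(x)\oplus E_s(x)$, of a linear map $E_s(x)\to E_u(x)$ of operator norm at most $\eta_0$. A block computation in this splitting (entirely analogous to Lemma~\ref{l:derform}) using $\|A_u^{-1}\|\leq\mu^{-1}$ and $\|A_s\|\leq\lambda$ shows that for every $z \in K$ the differential $d\varphi^{-1}(\varphi(z))$ sends stable graphs at $\varphi(z)$ with graph-norm $\leq\eta_0$ to stable graphs at $z$ with graph-norm $\leq(\lambda/\mu)\eta_0$ (the graph map transforms as $Q\mapsto A_u^{-1}QA_s$). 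Iterating, $d\varphi^{-n}(\varphi^n(z))$ takes the $\eta_0$-stable cone at $\varphi^n(z)$ into the $(\lambda/\mu)^n\eta_0$-cone at $z$.

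\emph{Main estimate and balancing.} Next fix $\Lambda_2\geq\Lambda$ bounding the second derivatives of $\varphi^{\pm 1}$ near $K$; by the chain rule, $\|d^2\varphi^{\pm n}\|_{C^0}\leq C\Lambda_2^n$ along orbits that stay near $K$. Given $x, y\in K$ with $r:=d(x,y)$ small, put $x_n:=\varphi^n(x)$, $y_n:=\varphi^n(y)$, so that $d(x_n, y_n)\leq\Lambda^n r$. Using the identities $E_s(x)=d\varphi^{-n}(x_n)E_s(x_n)$ and $E_s(y)=d\varphi^{-n}(y_n)E_s(y_n)$: provided $\Lambda^n r\leq r_0$, the transported $E_s(y_n)$ lies in the $\eta_0$-cone at $x_n$, so by the cone contraction $d\varphi^{-n}(x_n)$ sends it to a subspace within $(\lambda/\mu)^n\eta_0$ of $E_s(x)$, while replacing $d\varphi^{-n}(x_n)$ by $d\varphi^{-n}(y_n)$ perturbs the image by at most $\|d\varphi^{-n}(x_n)-d\varphi^{-n}(y_n)\|_{\mathrm{op}}\leq C\Lambda_2^n\Lambda^n r$. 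Together,
\[
d_{\mathscr G_s}\bigl(E_s(x), E_s(y)\bigr)\leq C\bigl[(\lambda/\mu)^n+\Lambda_2^n\Lambda^n r\bigr].
\]
Choosing $n\approx\log(1/r)/\bigl(\log(\mu/\lambda)+\log(\Lambda_2\Lambda)\bigr)$ to equate the two summands gives $d_{\mathscr G_s}(E_s(x), E_s(y))\leq Cr^\gamma$ with $\gamma=\log(\mu/\lambda)/\bigl(\log(\mu/\lambda)+\log(\Lambda_2\Lambda)\bigr)>0$. The argument for $E_u$ is symmetric, using forward iteration and the corresponding cone contraction on the unstable side. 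The main obstacle will be bookkeeping the nonlinearity error $\Lambda_2^n\Lambda^n r$: it is this term that forces $\gamma<1$ in general, and producing the correct balance between it and the cone contraction is the heart of the argument.
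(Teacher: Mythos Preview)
Your strategy is correct and standard, but it differs from the paper's. You work on the Grassmannian via the graph-transform cone contraction ($Q\mapsto A_u^{-1}QA_s$, rate $\lambda/\mu$) and control the nonlinearity separately through a second-derivative bound on $\varphi^{-n}$. The paper instead lifts the whole problem to the fiber-radial compactification $\overline{TM}$: the map $\Phi(x,v)=(\varphi(x),d\varphi(x)v)$ extends smoothly there, and a \emph{single} Lipschitz constant $\Lambda$ for $\Phi$ on $\overline{TM}$ simultaneously encodes base drift and action on directions. Taking $v\in E_s(x)$, $|v|=1$, and $w\in T_yM$ close to it, the paper bounds $d_{\overline{TM}}(\Phi^n(y,w),(\varphi^n(x),0))\le C\Lambda^n d(x,y)+C\lambda^n$, balances by choosing $n\approx -\log d(x,y)/\log(\Lambda/\lambda)$, deduces $|d\varphi^n(y)w|\le C\lambda^n$, and then decomposes $w=w_u+w_s$ to extract $|w_u|\le C\lambda^{2n}\le Cd(x,y)^\gamma$ with $\gamma=-2\log\lambda/\log(\Lambda/\lambda)$. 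This avoids charts, cones, and second derivatives entirely; your route is more explicit about the graph-transform mechanism but requires more bookkeeping.

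Two points to tighten in your argument. First, $\|d^2\varphi^{\pm n}\|\le C\Lambda_2^n$ is not what the chain rule gives: iterating $d^2(f\circ g)=d^2f\circ(dg)^{\otimes2}+df\circ d^2g$ yields $\|d^2\varphi^{-n}\|\le C\Lambda_2\,\Lambda^{2n}$, so your error term should read $C\Lambda^{3n}r$ rather than $C\Lambda_2^n\Lambda^n r$; this is harmless for the conclusion but changes the explicit $\gamma$. Second, and more substantively, the claim that ``replacing $d\varphi^{-n}(x_n)$ by $d\varphi^{-n}(y_n)$ perturbs the image by at most $\|d\varphi^{-n}(x_n)-d\varphi^{-n}(y_n)\|_{\mathrm{op}}$'' is not automatic: the Grassmannian distance between $LE$ and $L'E$ is controlled by $\|L-L'\|$ \emph{divided by the least singular value of $L|_E$}. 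Here that singular value is $\gtrsim\lambda^{-n}$ because $d\varphi^{-n}$ expands the stable cone, which in fact improves your estimate, but you should invoke this explicitly (and note that the per-step contraction $\lambda/\mu$ implicitly uses an adapted metric as in Lemma~\ref{l:adapted-metric}).
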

\begin{proof}
In this proof we denote by $C$ constants which only depend on $\varphi,K$.

We show H\"older continuity of $E_s$, with the case of $E_u$ handled similarly.
Let $\overline TM$ be the fiber-radial compactification of $TM$,
which is a manifold with interior $TM$ and boundary diffeomorphic
to the sphere bundle $SM$, with the boundary defining function $|v|^{-1}$.
Denote by $\Phi$ the action of $d\varphi$ on $TM$:
$$
\Phi(x,v):=(\varphi(x),d\varphi(x)v),\quad
(x,v)\in TM.
$$
Note that $\Phi^n(x,v)=(\varphi^n(x),d\varphi^n(x)v)$.
Moreover, since $\Phi$ is homogeneous with respect to dilations in the fibers of $TM$,
it extends to a smooth map on $\overline TM$.

Fix a Riemannian metric on $\overline TM$ (smooth up to the boundary)
and denote by $d_{\overline TM}$ the corresponding
distance function.
Then there exists 
a constant $\Lambda\geq 1$ depending only on $\varphi,K$ such that
\begin{equation}
  \label{e:holder-1}
d_{\overline TM}\big(\Phi(x,v),\Phi(y,w)\big)\leq \Lambda\cdot d_{\overline TM}\big((x,v),(y,w)\big)\quad\text{for}\quad
x,y\in K.
\end{equation}
We will show that the map $E_s$ is H\"older continuous with exponent
$$
\gamma:=-{2\log\lambda\over\log(\Lambda/\lambda)}>0.
$$
To do this, assume that
$$
x,y\in K,\quad
v\in E_s(x),\quad
w\in T_yM,\quad
|v|=1,\quad
d_{\overline TM}\big((x,v),(y,w)\big)\leq Cd(x,y).
$$
We write
$$
w=w_u+w_s,\quad
w_u\in E_u(y),\quad
w_s\in E_s(y).
$$
Then it suffices to prove that
\begin{equation}
  \label{e:holder-goal}
|w_u|\leq Cd(x,y)^\gamma.
\end{equation}
To show~\eqref{e:holder-goal}, we use the following 
bound true for all $n\geq 0$:
\begin{equation}
  \label{e:holder-2}
\begin{aligned}
d_{\overline TM}\big(\Phi^n(y,w),(\varphi^n(x),0)\big)
&\leq C\Lambda^n d(x,y)+d_{\overline TM}\big(\Phi^n(x,v),(\varphi^n(x),0)\big)\\
&\leq C\Lambda^n d(x,y)+C\lambda^n
\end{aligned}
\end{equation}
where the first inequality uses~\eqref{e:holder-1} iterated $n$ times
and the second one uses \eqref{e:hyp-map-s}.

Assume that $d(x,y)$ is small and choose
$$
n:=\Big\lfloor -{\log d(x,y)\over\log(\Lambda/\lambda)}\Big\rfloor\geq 0.
$$
Then $\Lambda^n d(x,y)\leq \lambda^n$, so~\eqref{e:holder-2} implies
$$
d_{\overline TM}\big(\Phi^n(y,w),(\varphi^n(x),0)\big)
\leq C\lambda^n.
$$
If $d(x,y)$ is small, then $n$ is large, thus $\Phi^n(y,w)$ is close to the zero section.
Therefore
\begin{equation}
  \label{e:holder-3}
|d\varphi^n(y)w|\leq C\lambda^n.
\end{equation}
Recalling the decomposition $w=w_u+w_s$ and using the bounds~\eqref{e:hyp-map-u},
\eqref{e:hyp-map-s},
and~\eqref{e:transverse}, we see that~\eqref{e:holder-3} implies
$$
|w_u|\leq C\lambda^n |d\varphi^n(y)w_u|\leq C\lambda^n|d\varphi^n(y)w|+C\lambda^{2n}
\leq C\lambda^{2n}\leq Cd(x,y)^\gamma.
$$
This gives~\eqref{e:holder-goal}, finishing the proof.
\end{proof}

\subsection{Adapted metrics}
  \label{s:adapted}
  
In preparation for the proof of Theorem~\ref{t:stun-maps}, we
show that there exist Riemannian metrics on $M$ which are adapted
to the map $\varphi$:
\begin{lemm}
  \label{l:adapted-metric}
Assume that $\varphi$ is hyperbolic on $K$. Fix $\tilde\lambda$ such that
$\lambda<\tilde\lambda<1$ where $\lambda$ is given in Definition~\ref{d:hyp-map}.
Then there exist $C^N$ Riemannian metrics $|\bullet|_u,|\bullet|_s$ on~$M$ such that
\begin{align}
\label{e:hyp-map-u-1}
|d\varphi^{-1}(x)v|_u\leq \tilde\lambda |v|_u&\quad\text{for all}\quad
v\in E_u(x),\ x\in K;\\
\label{e:hyp-map-s-1}
|d\varphi(x)v|_s\leq \tilde\lambda |v|_s&\quad\text{for all}\quad
v\in E_s(x),\ x\in K.
\end{align}
\end{lemm}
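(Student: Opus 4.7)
The plan is to build an adapted (Lyapunov) inner product pointwise on $K$, then extend and smooth it to $M$. Pick $\tilde\lambda'$ with $\lambda<\tilde\lambda'<\tilde\lambda$ and an integer $N_0\geq 1$ large enough that $C(\lambda/\tilde\lambda')^{N_0}\leq 1$, where $C$ is the constant in Definition~\ref{d:hyp-map}. For $x\in K$ and a vector decomposed as $v=v_u+v_s\in E_u(x)\oplus E_s(x)$, set
$$
|v|_{s,x}^2:=|v_u|^2+\sum_{n=0}^{N_0-1}(\tilde\lambda')^{-2n}|d\varphi^n(x)v_s|^2,\qquad
|v|_{u,x}^2:=\sum_{n=0}^{N_0-1}(\tilde\lambda')^{-2n}|d\varphi^{-n}(x)v_u|^2+|v_s|^2.
$$
An index-shift telescoping computation shows that for $v\in E_s(x)$,
$$
|d\varphi(x)v|_{s,\varphi(x)}^2=(\tilde\lambda')^2\Big(|v|_{s,x}^2-|v|^2+(\tilde\lambda')^{-2N_0}|d\varphi^{N_0}(x)v|^2\Big)\leq(\tilde\lambda')^2|v|_{s,x}^2,
$$
where the inequality uses \eqref{e:hyp-map-s} and the choice of $N_0$. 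Symmetric reasoning via \eqref{e:hyp-map-u} gives the analogous contraction for $|\bullet|_u$ along $E_u$.

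Next I would verify that these pointwise inner products depend continuously on $x\in K$. This is where Lemma~\ref{l:contin} enters: continuity of $x\mapsto E_u(x),E_s(x)$, combined with the smoothness of $d\varphi^n$ and of the original metric $|\bullet|$, shows that the symmetric $2$-tensors $g_{s,x},g_{u,x}$ are continuous in $x$ on $K$. I would then extend each continuous tensor field from $K$ to a continuous, positive-definite Riemannian metric on all of $M$: cover $K$ by finitely many coordinate charts (using compactness), apply the Tietze extension theorem to the components inside each chart (multiplied by a cutoff), glue using a partition of unity, and add a small multiple of a fixed smooth background metric to guarantee positive definiteness globally.

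Finally, smooth the resulting continuous metrics by convolution against a mollifier of small scale $\varepsilon$ in local charts, patched via a partition of unity; this produces genuinely $C^N$ (in fact $C^\infty$) Riemannian metrics $|\bullet|_u,|\bullet|_s$ on $M$. For $\varepsilon$ small the smoothed metric is arbitrarily close in $C^0$ to the continuous one on $K$, so the first-step contraction bound with constant $(\tilde\lambda')^2$ survives the perturbation and yields \eqref{e:hyp-map-u-1} and \eqref{e:hyp-map-s-1} with the weakened constant $\tilde\lambda$. The main obstacle is that $K$ may be topologically complicated (for instance a Cantor-like invariant set) and $E_u,E_s$ are not a priori defined off $K$; the construction succeeds only because Lemma~\ref{l:contin} supplies continuity on $K$, and because the strict gap $\tilde\lambda'<\tilde\lambda$ absorbs the inevitable $C^0$ error incurred when passing from the continuous pointwise construction to a globally defined $C^N$ metric.
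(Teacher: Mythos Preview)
Your argument is correct, but it is considerably more elaborate than necessary, and the extra complication stems from one avoidable choice: you define the adapted inner product using the splitting $v=v_u+v_s$, which forces you to work only on $K$ (where $E_u,E_s$ exist) and then go through continuity, Tietze extension, partition of unity, and mollification to produce a $C^N$ metric on all of $M$.

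The paper sidesteps all of this by observing that the Lyapunov sum can be applied to \emph{every} tangent vector, not just to the stable component. Concretely, for $x\in M$ and $v\in T_xM$ one sets
\[
|v|_s^2:=\sum_{n=0}^{m-1}\tilde\lambda^{-2n}\,|d\varphi^n(x)v|^2,
\]
with no reference to $E_u$ or $E_s$. This formula is built out of the smooth maps $d\varphi^n$ and the fixed smooth metric $|\bullet|$, so it defines a genuine $C^N$ Riemannian metric on all of $M$ immediately; no extension or smoothing is needed, and no auxiliary constant $\tilde\lambda'$ is required. The telescoping computation you wrote down is exactly the same, but now one only \emph{checks} the contraction inequality for $v\in E_s(x)$, $x\in K$, rather than building the metric out of such vectors. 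The unstable metric $|\bullet|_u$ is handled symmetrically with $d\varphi^{-n}$ in place of $d\varphi^n$.

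In short: same core idea (averaged Lyapunov metric, index-shift telescoping), but the paper gains a one-paragraph proof by not projecting onto $E_s$ in the definition. Your route works, and the machinery you invoke (Lemma~\ref{l:contin}, Tietze, mollification with a strict gap $\tilde\lambda'<\tilde\lambda$) is a legitimate way to pass from a continuous pointwise construction on $K$ to a $C^N$ global metric; it is simply unnecessary here.
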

\Remark Iterating~\eqref{e:hyp-map-u-1}, \eqref{e:hyp-map-s-1}
we get analogs of~\eqref{e:hyp-map-u}, \eqref{e:hyp-map-s} with $C=1$.
\begin{proof}
We first construct the metric $|\bullet|_s$. Let $|\bullet|$ be
a Riemannian metric on~$M$.
Take large fixed $m$ to be chosen later and define the Riemannian metric $|\bullet|_s$ by
$$
|v|_s^2:=\sum_{n=0}^{m-1} \tilde\lambda^{-2n}|d\varphi^n(x)v|^2,\quad
x\in M,\quad
v\in T_xM.
$$
Assume that $x\in K$ and $v\in E_s(x)$. Then
$$
\begin{aligned}
|d\varphi(x)v|_s^2&=\sum_{n=0}^{m-1}\tilde\lambda^{-2n}|d\varphi^{n+1}(x)v|^2
=\sum_{n=1}^m \tilde\lambda^{2-2n}|d\varphi^n(x)v|^2\\
&=\tilde\lambda^2\big(|v|_s^2-|v|^2+\tilde\lambda^{-2m}|d\varphi^m(x)v|^2\big)\\
&\leq \tilde\lambda^2\big(|v|_s^2-|v|^2+C\lambda^{2m}\tilde\lambda^{-2m}|v|^2\big)
\end{aligned}
$$
where in the last inequality we used~\eqref{e:hyp-map-s} and $C$ is a constant
depending only on $\varphi,K$. Since $\tilde\lambda>\lambda$, choosing
$m$ large enough we can guarantee that $C\lambda^{2m}\tilde\lambda^{-2m}\leq 1$,
thus~\eqref{e:hyp-map-s-1} holds.

The inequality~\eqref{e:hyp-map-u-1} is proved similarly, using the metric
$$
|v|_u^2:=\sum_{n=0}^{m-1} \tilde\lambda^{-2n}|d\varphi^{-n}(x)v|^2,\quad
x\in M,\quad
v\in T_xM.\qedhere
$$
\end{proof}

\subsection{Adapted charts}
  \label{s:adapted-coord}
  
To reduce Theorem~\ref{t:stun-maps} to Theorem~\ref{t:stun-adv},
we introduce charts on~$M$ which are adapted to the map $\varphi$.
We assume that $\varphi$ is hyperbolic on $K\subset M$, fix
$\tilde\lambda\in (\lambda,1)$ and let $|\bullet|_u$, $|\bullet|_s$
be the Riemannian metrics on $M$ constructed in Lemma~\ref{l:adapted-metric}.
We write the elements of $\mathbb R^d$ as $(x_1,x_2)$
where $x_1\in \mathbb R^{d_u}$, $x_2\in\mathbb R^{d_s}$. We use
the canonical stable/unstable subspaces of $\mathbb R^d$
\begin{equation}
  \label{e:e-su-0}
E_u(0):=\{(v_1,0)\mid v_1\in \mathbb R^{d_u}\},\quad
E_s(0):=\{(0,v_2)\mid v_2\in\mathbb R^{d_s}\}.
\end{equation}
Recall from~\eqref{e:B-infty-new} the notation
$$
\overline B_\infty(0,r)=\{(x_1,x_2)\in\mathbb R^d\colon \max(|x_1|,|x_2|)\leq r\}.
$$
\begin{defi}
  \label{d:adapted-chart}
Let $x_0\in K$. A diffeomorphism
$$
\varkappa:U_\varkappa\to V_\varkappa,\quad
x_0\in U_\varkappa\subset M,\quad
0\in V_\varkappa\subset \mathbb R^d
$$
is called an \textbf{adapted chart} for $\varphi$ centered at $x_0$, if:
\begin{enumerate}
\item $\varkappa(x_0)=0$;
\item $d\varkappa(x_0)E_u(x_0)=E_u(0)$
and the restriction of $d\varkappa(x_0)$ to $E_u(x_0)$ is an isometry
from the metric $|\bullet|_u$ to the Euclidean metric;
\item $d\varkappa(x_0)E_s(x_0)=E_s(0)$
and the restriction of $d\varkappa(x_0)$ to $E_s(x_0)$ is an isometry
from the metric $|\bullet|_s$ to the Euclidean metric.
\end{enumerate}
\end{defi}
For each $x_0\in K$, there exists an adapted chart for $\varphi$ centered at $x_0$.
Moreover, it follows from uniform transversality~\eqref{e:transverse}
of $E_u$, $E_s$ that we can select for each $x_0\in K$
an adapted chart for $\varphi$ centered at $x_0$
\begin{equation}
  \label{e:good-charts}
\varkappa_{x_0}:U_{x_0}\to V_{x_0},\quad
x_0\in U_{x_0}\subset M,\quad
0\in V_{x_0}\subset\mathbb R^d
\end{equation}
such that the set $\{\varkappa_{x_0}\mid x_0\in K\}$ is bounded in
the class of $C^{N+1}$ charts, more precisely:
\begin{enumerate}
\item there exists $\delta_0>0$ such that $\overline B_\infty(0,\delta_0)\subset V_{x_0}$
for all $x_0\in K$;
\item all order $\leq N+1$ derivatives of $\varkappa_{x_0}$ and $\varkappa_{x_0}^{-1}$ are bounded
uniformly in $x_0$.
\end{enumerate}
Note that we do not require continuous dependence of $\varkappa_{x_0}$ on $x_0$,
in fact in many cases such dependence is impossible because the bundles
$E_u,E_s$ are not topologically trivial.

We now study the action of the map $\varphi$ in adapted charts. For each $x_0\in K$, define
the diffeomorphism $\psi_{x_0}$ of neighborhoods of $0$ in $\mathbb R^d$ by
\begin{equation}
  \label{e:phi-adapted}
\psi_{x_0}:=\varkappa_{\varphi(x_0)}\circ\varphi\circ \varkappa_{x_0}^{-1}.
\end{equation}
Note that the set $\{\psi_{x_0}\mid x_0\in K\}$ is bounded in the class
of $C^{N+1}$ diffeomorphisms.

From the definition of adapted charts and the fact
that $E_u(x),E_s(x)$ are $\varphi$-invariant we see that
\begin{equation}
  \label{e:butter-1}
\psi_{x_0}(0)=0,\quad
d\psi_{x_0}(0)=\begin{pmatrix} A_{1,x_0} & 0 \\
0 & A_{2,x_0}\end{pmatrix}
\end{equation}
where $A_{1,x_0}:\mathbb R^{d_u}\to \mathbb R^{d_u}$,
$A_{2,x_0}:\mathbb R^{d_s}\to\mathbb R^{d_s}$ are linear isomorphisms.
Moreover, from the properties~\eqref{e:hyp-map-u-1},
\eqref{e:hyp-map-s-1} of the adapted metrics $|\bullet|_u$, $|\bullet|_s$
we get
\begin{equation}
  \label{e:butter-3}
\|A_{1,x_0}^{-1}\|\leq \tilde\lambda,\quad
\|A_{2,x_0}\|\leq \tilde\lambda,\quad
\max(\|A_{1,x_0}\|,\|A_{2,x_0}^{-1}\|)\leq C_0\quad\text{for all }x_0\in K
\end{equation}
for some fixed $C_0$, where $\|\bullet\|$ is the operator norm
with respect to the Euclidean norm.

\begin{figure}
\includegraphics{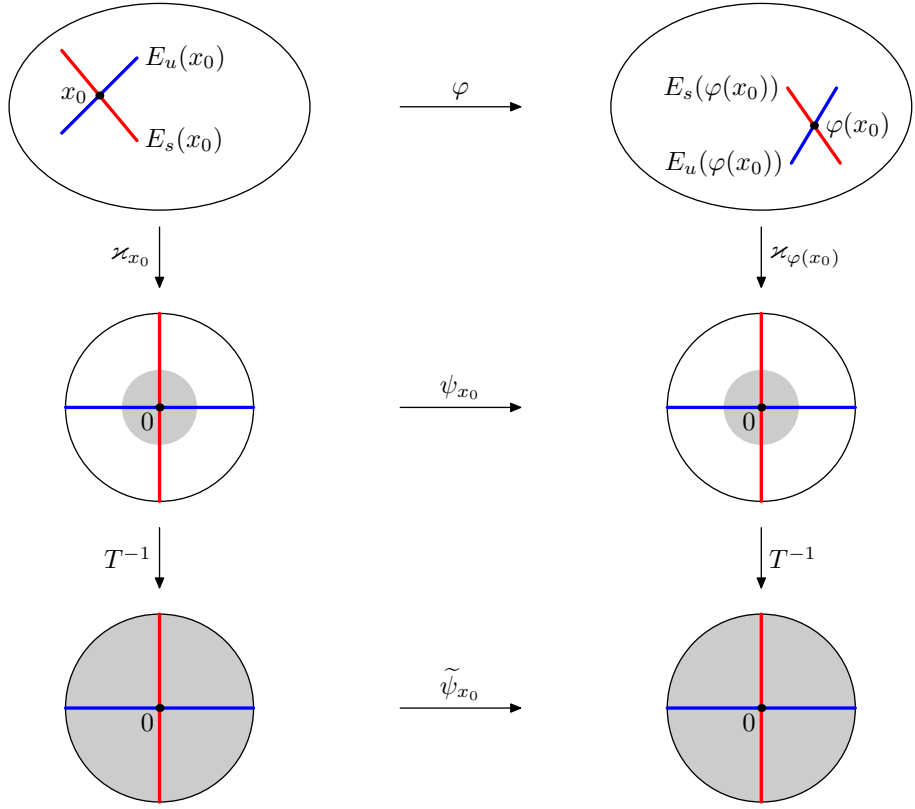}
\caption{An illustration of the commutative diagram~\eqref{e:commdiag}. The blue/red lines
are the unstable/stable subspaces of the tangent spaces.}
\label{f:charts}
\end{figure}

To make the maps $\psi_{x_0}$ close to their linearizations $d\psi_{x_0}(0)$,
we use the rescaling procedure introduced in~\S\ref{s:reduce-1}.
Fix small $\delta_1>0$ to be chosen later (when we apply Theorem~\ref{t:stun-adv})
and consider the rescaling map
$$
T:\mathbb R^d\to\mathbb R^d,\quad
T(x)=\delta_1x.
$$
Define the rescaled charts
\begin{equation}
  \label{e:rescaled-chart}
\widetilde\varkappa_{x_0}:=T^{-1}\circ\varkappa_{x_0},\quad
x_0\in K
\end{equation}
and the corresponding maps
\begin{equation}
  \label{e:phi-adapted-2}
\widetilde\psi_{x_0}:=\widetilde\varkappa_{\varphi(x_0)}\circ\varphi\circ \widetilde\varkappa_{x_0}^{-1}
=T^{-1}\circ\psi_{x_0}\circ T.
\end{equation}
That is, we have the commutative diagram (see also Figure~\ref{f:charts})
\begin{equation}
  \label{e:commdiag}
\begin{tikzcd}[column sep=large, row sep=large]
M \arrow[r, "\varphi"] \arrow[d, "\varkappa_{x_0}"] \arrow[dd, bend right,swap,"\widetilde\varkappa_{x_0}"]
& M \arrow[d, swap, "\varkappa_{\varphi(x_0)}"] \arrow[dd, bend left, "\widetilde\varkappa_{\varphi(x_0)}"]\\
\mathbb R^d \arrow[r, "\psi_{x_0}"] \arrow[d, "T^{-1}"]
& \mathbb R^d \arrow[d, swap, "T^{-1}"] \\
\mathbb R^d \arrow[r, "\widetilde\psi_{x_0}"]
& \mathbb R^d
\end{tikzcd}.
\end{equation}
We choose $\delta_1$ small enough so that $\overline B_{\infty}(0,1)$ is contained
in both the domain and the range of each $\widetilde\psi_{x_0}$. The map
$\widetilde\psi_{x_0}$ still satisfies~\eqref{e:butter-1}:
\begin{equation}
  \label{e:butter-2}
\widetilde\psi_{x_0}(0)=0,\quad
d\widetilde\psi_{x_0}(0)=\begin{pmatrix}
A_{1,x_0}&0\\
0&A_{2,x_0}\end{pmatrix}
\end{equation}
where $A_{j,x_0}$ are the same transformations as in~\eqref{e:butter-1}.

Moreover, as explained in~\S\ref{s:reduce-1}, for any given $\delta>0$,
if we choose $\delta_1$ small enough depending on $\delta,\varphi,K$
(but not on $x_0$) then we have the derivative bounds
\begin{equation}
  \label{e:delta-1-chosen}
\sup |\partial^\alpha\widetilde\psi_{x_0}|\leq \delta\quad\text{for all}\quad
x_0\in K,\quad
2\leq|\alpha|\leq N+1.
\end{equation}
In fact, one can take $\delta_1:=\delta/C$ where $C$ is some constant
depending on $\varphi,K$.

\subsection{Proof of Theorem~\ref{t:stun-maps}}
  \label{s:maps-proof}

We now give the proof of the Stable/Unstable Manifold Theorem for hyperbolic maps.
We fix $\tilde\lambda$ such that $\lambda<\tilde\lambda<1$. Let $\delta>0$
be the constant in Theorem~\ref{t:stun-adv}
where we use $\tilde\lambda,\tilde\lambda^{-1}$ in place of $\lambda,\mu$
and $C_0$ is the constant from~\eqref{e:butter-3}.
Choose the rescaling parameter $\delta_1>0$ in~\S\ref{s:adapted-coord} small enough
so that~\eqref{e:delta-1-chosen} holds.

We use the rescaled adapted charts $\widetilde\varkappa_{x_0}$, $x_0\in K$, defined
in~\eqref{e:rescaled-chart}, and the maps $\widetilde\psi_{x_0}$
giving the action of $\varphi$ in these charts, defined in~\eqref{e:phi-adapted-2}.
For every $x\in K$ and $m\in\mathbb Z$ denote
\begin{equation}
  \label{e:maps-defi}
\psi_{x,m}:=\widetilde\psi_{\varphi^m(x)}=\widetilde\varkappa_{\varphi^{m+1}(x)}\circ\varphi
\circ\widetilde\varkappa^{-1}_{\varphi^m(x)}.
\end{equation}
The dynamics of the iterates of $\varphi$ near the trajectory
$(\varphi^m(x))$
is conjugated by the charts $\widetilde\varkappa_{\varphi^m(x)}$
to the dynamics of the compositions of the maps $\psi_{x,m}$.
We will prove Theorem~\ref{t:stun-maps} by applying Theorem~\ref{t:stun-adv}
to the maps $\psi_{x,m}$ and pulling back the resulting stable/unstable manifolds
by $\widetilde\varkappa_{\varphi^m(x)}$ to get the stable/unstable manifolds
for $\varphi$.

As shown in~\S\ref{s:adapted-coord}, for each $x\in K$ the sequence of maps
$(\psi_{x,m})_{m\in\mathbb Z}$ satisfies
the assumptions in~\S\ref{s:reduce-5}, where we use $\tilde\lambda,\tilde\lambda^{-1}$
in place of $\lambda,\mu$. We apply Theorem~\ref{t:stun-adv} to get the stable/unstable
manifolds for this sequence, which we denote
$$
W^u_{x,m},W^s_{x,m}\subset \overline B_\infty(0,1)\subset\mathbb R^d.
$$
We define the unstable/stable manifolds for $\varphi$ at $x$
as follows:
\begin{equation}
  \label{e:stun-maps-defined}
W_u(x):=\widetilde\varkappa_{x}^{-1}(W^u_{x,0}),\quad
W_s(x):=\widetilde\varkappa_{x}^{-1}(W^s_{x,0}).
\end{equation}
The unstable/stable manifolds at the iterates $\varphi^n(x)$ are given by
\begin{equation}
  \label{e:works-well}
W_u(\varphi^n(x))=\widetilde\varkappa_{\varphi^n(x)}^{-1}(W^u_{x,n}),\quad
W_s(\varphi^n(x))=\widetilde\varkappa_{\varphi^n(x)}^{-1}(W^s_{x,n}),\quad
n\in\mathbb Z.
\end{equation}
The statement~\eqref{e:works-well} is not a tautology since
the left-hand sides were obtained by applying Theorem~\ref{t:stun-adv}
to the sequence of maps $(\psi_{\varphi^n(x),m})_{m\in\mathbb Z}$ while
the right-hand sides were obtained using the maps $(\psi_{x,m})_{m\in\mathbb Z}$.
To prove~\eqref{e:works-well} we note that \eqref{e:maps-defi} implies
$$
\psi_{\varphi^n(x),m}=\psi_{x,m+n}.
$$
Therefore, the sequence $(\psi_{\varphi^n(x),m})_{m\in\mathbb Z}$
is just a shift of the sequence $(\psi_{x,m})_{m\in\mathbb Z}$.
From the construction of the manifolds $W^u_{x,m}$, $W^s_{x,m}$ in~\S\ref{s:reduce-4}
we see that
$$
W^u_{\varphi^n(x),m}=W^u_{x,m+n},\quad
W^s_{\varphi^n(x),m}=W^s_{x,m+n}.
$$
Putting $m=0$ and recalling~\eqref{e:stun-maps-defined}, we get~\eqref{e:works-well}.

We now show that the manifolds $W_u(x)$, $W_s(x)$ defined in~\eqref{e:stun-maps-defined}
satisfy the statements~(1)--(8) in Theorem~\ref{t:stun-maps}.
This is straightforward since the above construction effectively
reduced Theorem~\ref{t:stun-maps} to Theorem~\ref{t:stun-adv}.
\begin{itemize}
\item[(1):] This follows from the definitions~\eqref{e:wuwsy} of $W^u_{x,m},W^s_{x,m}$. The uniform boundedness of the embeddings in $C^N$
follows from the fact that the functions $F^u_{x,m}$, $F^s_{x,m}$
used to define $W^u_{x,m}$, $W^s_{x,m}$ are bounded by~1 in $C^N$ norm,
see~\S\ref{s:reduce-4} and~\eqref{e:X-N-def}.
\smallskip
\item[(2):] We have $W^u_{x,m}\cap W^s_{x,m}=\{0\}$ by statement~(2) in Theorem~\ref{t:stun-adv}.
By~\eqref{e:fufsy} we have also $T_0 W^u_{x,m}=E_u(0)$,
$T_0 W^s_{x,m}=E_s(0)$ where $E_u(0),E_s(0)\subset\mathbb R^d$ are defined in~\eqref{e:e-su-0}.
It remains to use~\eqref{e:stun-maps-defined}
and the fact that $d\widetilde\varkappa_x(x)$ maps
$E_u(x),E_s(x)$ to $E_u(0),E_s(0)$ by Definition~\ref{d:adapted-chart}
and~\eqref{e:rescaled-chart}.
\smallskip
\item[(3):] Every $w=(w_1,w_2)\in\partial W^u_{x,m}$
satisfies $|w_1|=1$, thus $|w|\geq 1$. The latter is also true
for all $w\in \partial W^s_{x,m}$. It then suffices to choose
$\varepsilon_0$ small enough so that
\begin{equation}
  \label{e:eps-0-chosen}
x\in K,\
d(x,y)\leq \varepsilon_0
\quad\Longrightarrow\quad
|\widetilde\varkappa_x(y)|<1
\end{equation}
which is possible since $\widetilde\varkappa_x(x)=0$ and $\widetilde\varkappa_x$
are bounded in $C^N$ uniformly in $x$.
\smallskip
\item[(4):] By statement~(1) in Theorem~\ref{t:stun-adv} we have
\begin{equation}
  \label{e:yoga}
\psi_{x,-1}^{-1}(W^u_{x,0})\subset W^u_{x,-1},\quad
\psi_{x,0}(W^s_{x,0})\subset W^s_{x,1}.
\end{equation}
From~\eqref{e:maps-defi} we have
$$
\psi_{x,-1}^{-1}=\widetilde\varkappa_{\varphi^{-1}(x)}\circ\varphi^{-1}
\circ\widetilde\varkappa^{-1}_x,\quad
\psi_{x,0}=\widetilde\varkappa_{\varphi(x)}\circ\varphi\circ\widetilde\varkappa^{-1}_x.
$$
Applying $\widetilde\varkappa_{\varphi^{-1}(x)}^{-1}$ to the first statement in~\eqref{e:yoga}
and $\widetilde\varkappa_{\varphi(x)}^{-1}$ to the second one
and using~\eqref{e:works-well} we get
$\varphi^{-1}(W_u(x))\subset W_u(\varphi^{-1}(x))$ and
$\varphi(W_s(x))\subset W_s(\varphi(x))$ as needed.
\smallskip
\item[(5)--(6):] Define the closed neighborhoods of $x$
$$
\overline B_x:=\widetilde\varkappa_x^{-1}(\overline B_\infty(0,1)),\quad
x\in K.
$$
Note that $W_u(x)\cup W_s(x)\subset\overline B_x$.
By~\eqref{e:maps-defi}, if $n\geq 1$ and
$\varphi^{-\ell}(y)\in \overline B_{\varphi^{-\ell}(x)}$ for all $\ell=0,\dots,n-1$,
then
\begin{equation}
  \label{e:oid-1}
\widetilde\varkappa_{\varphi^{-n}(x)}\big(\varphi^{-n}(y)\big)
=\psi_{x,-n}^{-1}\cdots\psi_{x,-1}^{-1}(\widetilde\varkappa_x(y)).
\end{equation}
Similarly if $n\geq 1$ and $\varphi^\ell(y)\in \overline B_{\varphi^\ell(x)}$
for all $\ell=0,\dots,n-1$, then
\begin{equation}
  \label{e:oid-2}
\widetilde\varkappa_{\varphi^n(x)}\big(\varphi^n(y)\big)
=\psi_{x,n-1}\cdots\psi_{x,0}(\widetilde\varkappa_x(y)).
\end{equation}
If $y\in W_u(x)$, then for all $\ell\geq 0$ we have
$\varphi^{-\ell}(y)\in W_u(\varphi^{-\ell}(x))\subset \overline B_{\varphi^{-\ell}(x)}$.
Moreover, $\widetilde\varkappa_x(y)\in W^u_{x,0}$ by~\eqref{e:stun-maps-defined}.
Applying the statement~(3) in Theorem~\ref{t:stun-adv} with $m:=0$, we get
$\psi^{-1}_{x,-n}\cdots\psi_{x,-1}^{-1}(\widetilde\varkappa_x(y))\to 0$ as $n\to\infty$,
thus $d(\varphi^{-n}(y),\varphi^{-n}(x))\to 0$ by~\eqref{e:oid-1}.
The case $y\in W_s(x)$ is handled similarly.
\smallskip
\item[(7)--(8):] We show (7), with~(8) proved similarly. Assume that
$d(\varphi^{-n}(y),\varphi^{-n}(x))\leq\varepsilon_0$ for all $n\geq 0$.
By~\eqref{e:eps-0-chosen} this implies that
$\varphi^{-n}(y)\in \overline B_{\varphi^{-n}(x)}$ for all $n\geq 0$.
Then by~\eqref{e:oid-1} we have
$$
\psi_{x,-n}^{-1}\cdots\psi_{x,-1}^{-1}(\widetilde\varkappa_x(y))\in\overline B_\infty(0,1)\quad\text{for all }n\geq 0.
$$
By statement~(5) in Theorem~\ref{t:stun-adv} with $m:=0$ we have
$\widetilde\varkappa_x(y)\in W^u_{x,0}$ and thus
$y\in W_u(x)$ by~\eqref{e:stun-maps-defined}.
\end{itemize}
The quantitative statements~\eqref{e:cream-1}--\eqref{e:cream-3}
follow from~\eqref{e:stuna-2}--\eqref{e:stuna-4} similarly to the proofs of statements~(5)--(8) above.

Finally, statement~(9) in Theorem~\ref{t:stun-maps} essentially follows
from the continuous dependence of $E_u(x),E_s(x)$ on $x$ (Lemma~\ref{l:contin})
and the fact that $E_u(x)$, $E_s(x)$ are transversal to each other. We give
a more detailed (straightforward but slightly tedious) explanation below.

Recall from~\eqref{e:stun-maps-defined} that $W_u(x)=\widetilde\varkappa_x^{-1}(W_{x,0}^u)$
and $W_s(x)=\widetilde\varkappa_x^{-1}(W_{x,0}^s)$
where $\widetilde\varkappa_x$ is the rescaled adapted chart defined in~\eqref{e:rescaled-chart}
and $W_{x,0}^u,W_{x,0}^s\subset\overline B_\infty(0,1)$ are the unstable/stable graphs constructed
in Theorem~\ref{t:stun-adv}. Writing elements of $\mathbb R^d$ as
$(w_1,w_2)$ where $w_1\in\mathbb R^{d_u}$, $w_2\in\mathbb R^{d_s}$ we have
\begin{align}
  \label{e:yappy-1}
\widetilde\varkappa_x(W_u(x))=W_{x,0}^u&=\{(w_1,w_2)\colon |w_1|\leq 1,\
w_2=F_{x,u}(w_1)\},\\
  \label{e:yappy-2}
\widetilde\varkappa_x(W_s(x))=W_{x,0}^s&=\{(w_1,w_2)\colon |w_2|\leq 1,\
w_1=F_{x,s}(w_2)\}
\end{align}
for some functions $F_{x,u}:\overline B_u(0,1)\to\overline B_s(0,1)$,
$F_{x,s}:\overline B_s(0,1)\to\overline B_u(0,1)$,
where $\overline B_u(0,1),\overline B_s(0,1)$ are defined in~\eqref{e:stab-the-ball}, such that
(recalling~\eqref{e:fufsy}, \eqref{e:F-u-derb}, and~\eqref{e:delta-1-chosen})
\begin{equation}
  \label{e:fluffy}
F_{x,u}(0)=0,\quad
F_{x,s}(0)=0,\quad
\max\big(\|F_{x,u}\|_{C^1},\|F_{x,s}\|_{C^1}\big)\leq C\delta_1;
\end{equation}
here $\delta_1>0$ is the rescaling parameter used in the definition~\eqref{e:rescaled-chart}
of the chart $\widetilde\varkappa_x$.

Now, we assume that $x,y\in K$ and $d(x,y)\leq \varepsilon_0$
where $\varepsilon_0>0$ is small, in particular $\varepsilon_0\ll \delta_1$.
Then $W_u(y)$ is contained in the domain of the chart $\widetilde\varkappa_x$.
The points in $W_s(x)\cap W_u(y)$ have the form
$\widetilde\varkappa_x^{-1}(F_{x,s}(w_2),w_2)=\widetilde\varkappa_y^{-1}(w_1,F_{y,u}(w_1))$ where
$w=(w_1,w_2)\in\overline B_\infty(0,1)$ solves the equation
\begin{equation}
  \label{e:equator}
G_{x,y}(w)=0,\quad
G_{x,y}(w_1,w_2):=
\widetilde\varkappa_{x,y}(w_1,F_{y,u}(w_1))-(F_{x,s}(w_2),w_2)
\end{equation}
where we put
$$
\widetilde\varkappa_{x,y}:=\widetilde\varkappa_x\circ\widetilde\varkappa_y^{-1}
=T^{-1}\circ\varkappa_{x,y}\circ T,\quad
\varkappa_{x,y}:=\varkappa_x\circ\varkappa_y^{-1},\quad
T:w\mapsto\delta_1 w,
$$
and $\varkappa_x,\varkappa_y$ are the unrescaled charts defined in~\eqref{e:good-charts}.
Since $\varkappa_x,\varkappa_y$ are adapted charts (see Definition~\ref{d:adapted-chart}),
$d(x,y)\leq\varepsilon_0$, and $E_u(x),E_s(x)$ depend continuously on $x$,
for small enough $\varepsilon_0$ we have
$$
d\varkappa_{x,y}(0)=A_{x,y}+\mathcal O(\delta_1),\quad
A_{x,y}:=\begin{pmatrix}A_{1,x,y} & 0 \\
0 & A_{2,x,y} \end{pmatrix}
$$
where $A_{1,x,y}:\mathbb R^{d_u}\to\mathbb R^{d_u}$,
$A_{2,x,y}:\mathbb R^{d_s}\to\mathbb R^{d_s}$
are isometries. The rescaled change of coordinates map $\widetilde\varkappa_{x,y}$
then satisfies (assuming $\varepsilon_0\leq\delta_1^2$)
$$
|\widetilde\varkappa_{x,y}(0)|\leq C\delta_1,\quad
\sup_{w\in \overline B_\infty(0,1)} \|d\widetilde\varkappa_{x,y}(w)-A_{x,y}\|\leq C\delta_1. 
$$
Combining this with~\eqref{e:fluffy} we get
$$
|G_{x,y}(0)|\leq C\delta_1,\quad
\sup_{w\in\overline B_\infty(0,1)}\bigg\|dG_{x,y}(w)-\begin{pmatrix} A_{1,x,y}&0 \\ 0 & -I\end{pmatrix}\bigg\|\leq C\delta_1.
$$
If $\delta_1$ is small enough, then by the Contraction Mapping Principle
the equation~\eqref{e:equator} has a unique solution in $\overline B_\infty(0,1)$.
Therefore, $W_s(x)\cap W_u(y)$ consists of a single point as required.

\subsection{Hyperbolic flows}
  \label{s:hyp-flows}
  
We finally consider the setting of hyperbolic flows.
Let $M$ be a $d$-dimensional manifold without boundary
and $d=d_u+d_s+1$ where $d_u,d_s\geq 0$. Let
$$
\varphi^t=\exp(tX):M\to M,\quad
t\in\mathbb R
$$
be the flow generated by a $C^{N+1}$ vector field $X$ on $M$.
For simplicity we assume that $\varphi^t$ is globally well-defined for all $t$,
though in practice it is enough to require this in the neighborhood of the set $K$ below.
We assume that $K\subset M$ is a $\varphi^t$-invariant hyperbolic set in the following sense:
\begin{defi}
  \label{d:hyp-flows}
Let $K\subset M$ be a compact set such that $\varphi^t(K)=K$ for all $t\in\mathbb R$.
We say that the flow $\varphi^t$ is \textbf{hyperbolic} on $K$ if
the generating vector field $X$ does not vanish on $K$ and there exists a splitting
\begin{equation}
  \label{e:hypflow-split}
T_xM=E_0(x)\oplus E_u(x)\oplus E_s(x),\quad
x\in K,\quad
E_0(x):=\mathbb R X(x),
\end{equation}
where $E_u(x),E_s(x)\subset T_x M$ are subspaces of dimensions $d_u,d_s$ and:
\begin{itemize}
\item $E_u,E_s$ are invariant under the flow, namely
for all $x\in K$ and $t\in\mathbb R$
\begin{equation}
  \label{e:hypflow-inv}
d\varphi^t(x)E_u(x)=E_u(\varphi^t(x)),\quad
d\varphi^t(x)E_s(x)=E_s(\varphi^t(x));
\end{equation}
\item $d\varphi^t$ is expanding on $E_u$ and contracting on $E_s$, namely there exist constants $C>0$, $\nu>0$
such that for some Riemannian metric $|\bullet|$ on $M$ and
all $x\in K$
\begin{equation}
  \label{e:hypflow-exp}
|d\varphi^t(x)v|\leq Ce^{-\nu|t|}\cdot|v|,\quad
\begin{cases}
v\in E_u(x),& t\leq 0;\\
v\in E_s(x),& t\geq 0.
\end{cases}
\end{equation}
\end{itemize}
\end{defi}
\Remarks
1. The time-$t$ map $\varphi^t$ of a hyperbolic flow is \textbf{not} a hyperbolic map
in the sense of Definition~\ref{d:hyp-map} because of the flow direction $E_0$.
  
\noindent 2. The property~\eqref{e:hypflow-exp} does not depend on the choice of the
metric on $M$, though the constant $C$ (but not $\nu$) will depend on the metric.

\noindent 3. The basic example of a hyperbolic set is a closed trajectory
$$
K=\{\varphi^t(x_0)\colon 0\leq t\leq T\}\quad\text{for some}\quad
x_0\in M,\
T>0\quad\text{such that}\quad
\varphi^T(x_0)=x_0
$$
which is hyperbolic, namely $d\varphi^T(x_0)$ has a simple eigenvalue~$1$
and no other eigenvalues on the unit circle. The opposite situation
is when $K=M$; in this case $\varphi^t$ is called an \emph{Anosov flow}.
An important class of Anosov flows are geodesic flows
on negatively curved manifolds, discussed in~\S\ref{s:surf-neg} below.

As in~\S\ref{s:hyp-maps}, fix a distance function $d(\bullet,\bullet)$
on $M$ and define the balls $\overline B_d(x,r)$ by~\eqref{e:B-d-def}.
We now state the Stable/Unstable Manifold Theorem for hyperbolic flows,
which is similar to the case of maps (Theorem~\ref{t:stun-maps}):
\begin{theo}
  \label{t:stun-flows}
Assume that the flow $\varphi^t$ is hyperbolic on $K\subset M$. Then for each $x\in K$
there exist \textbf{local unstable/stable manifolds}
$$
W_u(x),W_s(x)\subset M
$$
which have the following properties for some $\varepsilon_0>0$ depending only on $\varphi^t,K$:
\begin{enumerate}
\item $W_u(x),W_s(x)$ are $C^N$ embedded disks of dimensions $d_u,d_s$,
and the $C^N$ norms of the embeddings are bounded uniformly in~$x$;
\item $W_u(x)\cap W_s(x)=\{x\}$ and $T_x W_u(x)=E_u(x)$, $T_xW_s(x)=E_s(x)$;
\item the boundaries of $W_u(x),W_s(x)$ do not intersect $\overline B_d(x,\varepsilon_0)$;
\item $\varphi^{-1}(W_u(x))\subset W_u(\varphi^{-1}(x))$ and
$\varphi^1(W_s(x))\subset W_s(\varphi^1(x))$;
\item for each $y\in W_u(x)$, we have $d(\varphi^t(y),\varphi^t(x))\to 0$
as $t\to -\infty$;
\item for each $y\in W_s(x)$, we have $d(\varphi^t(y),\varphi^t(x))\to 0$
as $t\to \infty$;
\item if $y\in M$, $d(\varphi^t(y),\varphi^t(x))\leq\varepsilon_0$ for all $t\leq 0$,
and $d(\varphi^{t}(y),\varphi^{t}(x))\to 0$ as $t\to-\infty$,
then $y\in W_u(x)$;
\item if $y\in M$, $d(\varphi^t(y),\varphi^t(x))\leq\varepsilon_0$ for all $t\geq 0$,
and $d(\varphi^{t}(y),\varphi^{t}(x))\to 0$ as $t\to\infty$,
then $y\in W_s(x)$.
\end{enumerate}
\end{theo}
\Remarks 1. The statement~(4) is somewhat artificial since it involves the time-one map
$\varphi^1$ and its inverse $\varphi^{-1}$. By rescaling the flow we can construct local stable/unstable manifolds
such that the statement~(4) holds with $\varphi^{t_0},\varphi^{-t_0}$ instead,
where $t_0>0$ is any fixed number. A more natural statement
would be that $\varphi^{-t}(W_u(x))\subset W_u(\varphi^{-t}(x))$
and $\varphi^t(W_s(x))\subset W_s(\varphi^t(x))$ for all $t\geq 0$,
but this would be more difficult to arrange since our method of proof is tailored to discrete time evolution.
However, below in~\S\ref{s:flow-invariance} we explain that the \emph{global} stable/unstable manifolds
are invariant under $\varphi^t$ for all $t$.

\noindent 2. Compared to Theorem~\ref{t:stun-maps}, properties~(7) and~(8) impose
the additional condition that $d(\varphi^t(y),\varphi^t(x))\to 0$. This
is due to the presence of the flow direction: for instance, if $y=\varphi^s(x)$
where $s\neq 0$ is small, then $d(\varphi^t(y),\varphi^t(x))\leq\varepsilon_0$ for all $t$
but $y\notin W_u(x)\cup W_s(x)$. Without this additional condition we can only
assert that $y$ lies in the \emph{weak} stable/unstable manifold of $x$, see~\S\ref{s:flows-weak}.

\noindent 3. The analog of statement~(9) of Theorem~\ref{t:stun-maps}
is given in~\eqref{e:weak-transverse}.
The analogs
of the quantitative statements~\eqref{e:cream-1}--\eqref{e:cream-3}
are discussed in~\S\ref{s:flows-quant}.

We now sketch the proof of Theorem~\ref{t:stun-flows}. We follow
the proof of Theorem~\ref{t:stun-maps} in~\S\S\ref{s:continuity}--\ref{s:maps-proof},
indicating the changes needed along the way.

First of all,
the proof of Lemma~\ref{l:contin} applies without change, so
the spaces $E_u(x),E_s(x)$ depend continuously on $x$.
(The H\"older continuity Lemma~\ref{l:holder} holds as well.)
This implies the following version of uniform transversality~\eqref{e:transverse}:
there exists a constant $C$ such that for all $x\in K$
\begin{equation}
  \label{e:transverse-flows}
\max\big(|v_0|,|v_u|,|v_s|\big)\leq C|v_0+v_u+v_s|\quad\text{if}\quad
v_0\in E_0(x),\
v_u\in E_u(x),\
v_s\in E_s(x).
\end{equation}
Next, existence of adapted metrics is given by the following analog of Lemma~\ref{l:adapted-metric}:
\begin{lemm}
  \label{l:adapted-metric-flow}
Assume that $\varphi^t$ is hyperbolic on~$K$. Fix $\tilde\nu$ such that $0<\tilde\nu<\nu$
where $\nu$ is given in Definition~\ref{d:hyp-flows}. Then there exist $C^N$ Riemannian metrics
$|\bullet|_u$, $|\bullet|_s$ on $M$ such that for all $x\in K$
\begin{equation}
  \label{e:adapted-metric-flow}
\begin{aligned}
|d\varphi^t(x)v|_u&\leq e^{-\tilde\nu |t|}\cdot|v|_u\quad\text{for all}\quad v\in E_u(x),\
t\leq 0;\\
|d\varphi^t(x)v|_s&\leq e^{-\tilde\nu |t|}\cdot|v|_s\quad\text{for all}\quad v\in E_s(x),\
t\geq 0.
\end{aligned}
\end{equation}
\end{lemm}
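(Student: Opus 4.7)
The plan is to construct the adapted metrics as continuous-time averages along the flow, in analogy with the discrete sums used in Lemma~\ref{l:adapted-metric}. Starting from a background Riemannian metric $|\bullet|$ on $M$, fix a large time $T>0$ to be chosen, and define, for $x\in M$ and $v\in T_xM$,
\[
|v|_s^2 := \int_0^T e^{2\tilde\nu t}\,|d\varphi^t(x)v|^2\,dt,\qquad
|v|_u^2 := \int_0^T e^{2\tilde\nu t}\,|d\varphi^{-t}(x)v|^2\,dt.
\]
Both are positive definite because $d\varphi^{\pm t}(x)$ is an isomorphism, and both are $C^N$ in $x$: since $X$ is $C^{N+1}$, the differential $d\varphi^t$ is $C^N$ in $x$ uniformly for $t\in[0,T]$, and integration against the smooth weight $e^{2\tilde\nu t}$ preserves this regularity.

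Next I would verify the contraction~\eqref{e:adapted-metric-flow} for $|\bullet|_s$ by differentiating along the flow. Fix $x\in K$ and $v\in E_s(x)$, and set $V(h):=d\varphi^h(x)v$. By~\eqref{e:hypflow-inv} we have $V(h)\in E_s(\varphi^h(x))$, and the semigroup property gives $d\varphi^t(\varphi^h(x))V(h)=V(t+h)$, hence
\[
|V(h)|_{s,\varphi^h(x)}^2 = \int_0^T e^{2\tilde\nu t}\,|V(t+h)|^2\,dt.
\]
Differentiating in $h$ and integrating by parts in $t$ yields at $h=0$
\[
\frac{d}{dh}\Big|_{h=0}|V(h)|_{s,\varphi^h(x)}^2 = e^{2\tilde\nu T}\,|V(T)|^2 \;-\; |v|^2 \;-\; 2\tilde\nu\,|v|_s^2.
\]
Since $\tilde\nu<\nu$, the bound~\eqref{e:hypflow-exp} gives $e^{2\tilde\nu T}|V(T)|^2\leq C^2 e^{-2(\nu-\tilde\nu)T}|v|^2$, so for $T$ large enough (depending only on $C,\nu,\tilde\nu$) the first two terms on the right have sum $\leq 0$, leaving the infinitesimal contraction $\frac{d}{dh}\big|_{h=0}|V(h)|_{s,\varphi^h(x)}^2\leq -2\tilde\nu|v|_s^2$. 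Applying the same calculation at $\varphi^h(x)\in K$ in place of $x$ and setting $g(h):=|V(h)|_{s,\varphi^h(x)}^2$ gives the ODE inequality $g'(h)\leq -2\tilde\nu g(h)$ for all $h\geq 0$, which integrates to $g(h)\leq e^{-2\tilde\nu h}g(0)$, that is the $t\geq 0$ line of~\eqref{e:adapted-metric-flow}. The analogous bound for $|\bullet|_u$ follows from the same computation performed along the backward orbit $h\mapsto\varphi^{-h}(x)$ using that $v\in E_u(x)$ and~\eqref{e:hypflow-exp} for $t\leq 0$.

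The main obstacle is ensuring that a single $T$ works uniformly for all $x\in K$; this is handled by compactness of $K$ and the uniformity of the constants $C,\nu$ in Definition~\ref{d:hyp-flows}, which let us pick $T$ once and for all with $C^2 e^{-2(\nu-\tilde\nu)T}\leq 1$. A minor technical point is that the infinitesimal inequality is only derived at points of $K$, but since $K$ is $\varphi^t$-invariant the orbit of any $x\in K$ stays in $K$, which is enough to integrate the inequality along that orbit and conclude.
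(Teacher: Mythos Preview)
Your proposal is correct and follows essentially the same approach as the paper: you define the adapted metrics by exactly the integrals the paper writes down, and your differentiate-then-Gr\"onwall computation is the natural continuous-time analogue of the telescoping-sum argument the paper uses in Lemma~\ref{l:adapted-metric} (which is all the paper invokes here). The paper's proof is only a one-line sketch, so your version is in fact more detailed.
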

\begin{proof}
This follows by a similar argument to the proof of Lemma~\ref{l:adapted-metric},
fixing a metric $|\bullet|$ on $M$ and defining the adapted metrics as follows:
for $v\in T_xM$,
$$
|v|_u^2:=\int_0^T e^{2\tilde\nu s}|d\varphi^{-s}(x)v|^2\,ds,\quad
|v|_s^2:=\int_0^T e^{2\tilde\nu s}|d\varphi^s(x)v|^2\,ds
$$
where $T>0$ is a sufficiently large constant.
\end{proof}
We next define adapted charts, similarly to~\S\ref{s:adapted-coord}.
Fix $\tilde\nu\in (\nu,1)$ and let $|\bullet|_u$, $|\bullet|_s$
be the adapted metrics constructed in Lemma~\ref{l:adapted-metric-flow}.
We write elements of $\mathbb R^d$
as $(x_0,x_1,x_2)$ where $x_0\in\mathbb R$, $x_1\in\mathbb R^{d_u}$,
and $x_2\in \mathbb R^{d_s}$. Consider the subspaces of $\mathbb R^d$
$$
E_u(0):=\{(0,v_1,0)\mid v_1\in\mathbb R^{d_u}\},\quad
E_s(0):=\{(0,0,v_2)\mid v_2\in\mathbb R^{d_s}\}.
$$
\begin{defi}
  \label{d:adapted-chart-flows}
Let $x\in K$. A $C^{N+1}$ diffeomorphism
$$
\varkappa:U_\varkappa\to V_\varkappa,\quad
x\in U_\varkappa\subset M,\quad
0\in V_\varkappa\subset\mathbb R^d
$$
is called an \textbf{adapted chart} for $\varphi^t$ centered at $x$, if:
\begin{enumerate}
\item $\varkappa(x)=0$;
\item for each $y\in U_\varkappa$, $d\varkappa(y)$ sends the generator of the flow
$X(y)$ to $\partial_{x_0}$;
\item $d\varkappa(x)E_u(x)=E_u(0)$ and the restriction of $d\varkappa(x)$ to $E_u(x)$
is an isometry from the metric $|\bullet|_u$ to the Euclidean metric;
\item $d\varkappa(x)E_s(x)=E_s(0)$ and the restriction of $d\varkappa(x)$ to $E_s(x)$
is an isometry from the metric $|\bullet|_s$ to the Euclidean metric.
\end{enumerate}
\end{defi}
Similarly to~\S\ref{s:adapted-coord}, it follows from the uniform transversality property~\eqref{e:transverse-flows}
that we can select for each $x\in K$ an adapted chart for $\varphi^t$
centered at $x$
$$
\varkappa_x:U_x\to V_x,\quad
x\in K
$$
such that the set $\{\varkappa_x\mid x\in K\}$ is bounded in the class of $C^{N+1}$ charts.

Similarly to~\eqref{e:rescaled-chart} we next define rescaled charts
\begin{equation}
  \label{e:risky-charts}
\widetilde\varkappa_x:=T^{-1}\circ \varkappa_x,\quad
x\in K;\quad
T(w)=\delta_1 w
\end{equation}
where $\delta_1>0$ is chosen small depending only on $\varphi^t,K$
when we apply Theorem~\ref{t:stun-adv}.
The action of the time-one map $\varphi^1$ in the charts
$\widetilde\varkappa_x$ is given by the maps
\begin{equation}
  \label{e:flow-acting}
\widetilde\psi_x:=\widetilde\varkappa_{\varphi^1(x)}\circ\varphi^1\circ\widetilde\varkappa_x^{-1}.
\end{equation}
Arguing as in~\S\ref{s:adapted-coord}, we see that
$\widetilde\psi_x$ has the following properties:
\begin{itemize}
\item the domain and the range of $\widetilde\psi_x$ contain the closed
ball in $\mathbb R^d$;
\item we have
\begin{equation}
  \label{e:flow-differ}
\widetilde\psi_x(0)=0,\quad
d\widetilde\psi_x(0)=\begin{pmatrix}
1 & 0 & 0\\
0 & A_{1,x} & 0\\
0 & 0 & A_{2,x}
\end{pmatrix}
\end{equation}
where the linear maps $A_{1,x}:\mathbb R^{d_u}\to\mathbb R^{d_u}$,
$A_{2,x}:\mathbb R^{d_s}\to\mathbb R^{d_s}$ satisfy
for all $x\in K$
$$
\|A_{1,x}^{-1}\|\leq e^{-\tilde\nu},\quad
\|A_{2,x}\|\leq e^{-\tilde\nu},\quad
\max(\|A_{1,x}\|,\|A_{2,x}\|^{-1})\leq C_0
$$
where $C_0$ is some constant depending only on $\varphi^t,K$;
\item for any given $\delta>0$, if we choose $\delta_1$ small enough
depending on $\delta$, then
\begin{equation}
  \label{e:flow-derbounds}
\sup|\partial^\alpha\widetilde\psi_x|\leq\delta\quad\text{for all}\quad
x\in K,\quad
2\leq |\alpha|\leq N+1.
\end{equation}
\end{itemize}
The matrix $d\widetilde\psi_x(0)$ in~\eqref{e:flow-differ} has eigenvalue 1
coming from the flow direction, which makes some parts of the proof problematic
because Theorem~\ref{t:stun-adv} only partly applies when either
the expansion or the contraction rate is equal to~1~-- see~\S\ref{s:reduce-6}.
To deal with this problem we introduce the \emph{reduced} maps
$\omega_x$, which act on subsets of $\mathbb R^{d-1}$ and correspond to the
action of the flow on Poincar\'e sections. Define the projection map
$$
\pi_{us}:(w_0,w_1,w_2)\mapsto (w_1,w_2)
$$
where $w_0\in\mathbb R$, $w_1\in \mathbb R^{d_u}$, $w_2\in\mathbb R^{d_s}$.
From the definition of adapted charts and~\eqref{e:flow-acting} we see that
\begin{equation}
  \label{e:flow-flowing}
d\widetilde\psi_x(w)\partial_{x_0}=\partial_{x_0}\quad\text{for all }w.
\end{equation}
Therefore (shrinking the domain of $\widetilde\psi_x$ if necessary) there exists
a diffeomorphism $\omega_x$ of open neighborhoods of $\{(x_1,x_2)\in\mathbb R^{d-1}\colon \max(|x_1|,|x_2|)\leq 1\}$
such that
\begin{equation}
  \label{e:omega-x-def}
\pi_{us}(\widetilde\psi_x(w))=\omega_x(\pi_{us}(w))\quad\text{for all }w.
\end{equation}
The maps $\omega_x$ satisfy
$$
\omega_x(0)=0,\quad
d\omega_x(0)=\begin{pmatrix} A_{1,x} & 0 \\ 0 & A_{2,x}\end{pmatrix}
$$
where $A_{1,x},A_{2,x}$ are the same matrices as in~\eqref{e:flow-differ}.
They also satisfy the derivative bounds~\eqref{e:flow-derbounds}.

We can finally give the 
\begin{proof}[Proof of Theorem~\ref{t:stun-flows}]
We argue similarly to~\S\ref{s:maps-proof}. For each
$x\in K$ and $m\in\mathbb Z$ define the maps
$$
\psi_{x,m}:=\widetilde\psi_{\varphi^m(x)}=\widetilde\varkappa_{\varphi^{m+1}(x)}\circ\varphi^1
\circ\widetilde\varkappa_{\varphi^m(x)}^{-1}.
$$
The sequence of maps $(\psi_{x,m})_{m\in\mathbb Z}$ satisfies the assumptions in~\S\ref{s:reduce-5}
where we absorb the $\partial_{x_0}$ direction into the stable space and put
$$
\lambda:=1,\quad
\mu:=e^{\tilde\nu}>1.
$$
As explained in~\S\ref{s:reduce-6}, Theorem~\ref{t:stun-adv}
still partially applies to the maps $(\psi_{x,m})_{m\in\mathbb Z}$
even though $\lambda=1$, yielding the unstable manifolds
$W^u_{x,m}\subset\mathbb R^d$. We then define the unstable manifold for $\varphi^t$
at $x$ by
\begin{equation}
  \label{e:flow-u-def}
W_u(x):=\widetilde\varkappa_x^{-1}(W^u_{x,0}).
\end{equation}
If we instead absorb the $\partial_{x_0}$ direction into the unstable space,
then the sequence $(\psi_{x,m})_{m\in\mathbb Z}$ satisfies the assumptions in~\S\ref{s:reduce-5} where
$$
\lambda:=e^{-\tilde\nu}<1,\quad
\mu:=1.
$$
As explained in~\S\ref{s:reduce-6},
Theorem~\ref{t:stun-adv} gives the stable manifolds $W^s_{x,m}\subset\mathbb R^d$,
and we define the stable manifold for $\varphi^t$ at $x$ by
\begin{equation}
  \label{e:flow-s-def}
W_s(x):=\widetilde\varkappa_x^{-1}(W^s_{x,0}).
\end{equation}
Statements~(1)--(4) of Theorem~\ref{t:stun-flows} are then proved
in the same way as for Theorem~\ref{t:stun-maps}. The proof of Theorem~\ref{t:stun-maps}
also gives the convergence statements~(5)--(6) for integer~$t$, which imply
these statements for all $t$.

To show statements~(7)--(8) we use the reduced maps $\omega_x$. 
The sequence
$$
\omega_{x,m}:=\omega_{\varphi^m(x)}
$$
satisfies the assumptions
in~\S\ref{s:reduce-5} with
$$
\lambda:=e^{-\tilde\nu}<1<\mu:=e^{\tilde\nu}.
$$
Therefore Theorem~\ref{t:stun-adv} applies to give unstable/stable manifolds
for the sequence $(\omega_{x,m})_{m\in\mathbb Z}$. Recalling the construction
of these manifolds in~\S\ref{s:reduce-4} it is straightforward to see that
the unstable/stable manifolds for $(\omega_{x,m})_{m\in\mathbb Z}$ are equal
to $\pi_{us}(W^u_{x,m})$, $\pi_{us}(W^s_{x,m})$ where
$W^u_{x,m}$, $W^s_{x,m}$ are the unstable/stable manifolds for the sequence
$(\psi_{x,m})_{m\in\mathbb Z}$.

We now show statement~(7), with the statement~(8) proved similarly.
Assume that $y\in M$ and $d(\varphi^t(y),\varphi^t(x))\leq\varepsilon_0$ for all $t\leq 0$. Arguing
similarly to the proof of Theorem~\ref{t:stun-maps} we see that
$$
\psi_{x,-n}^{-1}\cdots\psi_{x,-1}^{-1}(\widetilde\varkappa_x(y))\in \{(w_0,w_1,w_2)\colon \max(|w_1|,|(w_0,w_2)|)\leq 1\}\quad\text{for all}\quad n\geq 0.
$$
It follows from~\eqref{e:omega-x-def} that
$$
\omega_{x,-n}^{-1}\cdots\omega_{x,-1}^{-1}(\pi_{us}(\widetilde\varkappa_x(y)))
\in\{(w_1,w_2)\colon \max(|w_1|,|w_2|)\leq 1\}\quad\text{for all}\quad n\geq 0.
$$
Applying statement~(5) in Theorem~\ref{t:stun-adv} for the maps $(\omega_{x,m})_{m\in\mathbb Z}$,
we see that $\pi_{us}(\widetilde\varkappa_x(y))\in \pi_{us}(W^u_{x,0})$ and thus
\begin{equation}
  \label{e:fiji}
\widetilde\varkappa_x(y)=w+(s,0,0)\quad\text{for some}\quad
w\in W^u_{x,0},\quad
s\in [-2,2].
\end{equation}
Now, assume additionally that $d(\varphi^t(y),\varphi^t(x))\to 0$ as $t\to -\infty$.
Then
\begin{equation}
  \label{e:fiji2}
\psi_{x,-n}^{-1}\cdots\psi_{x,-1}^{-1}(\widetilde\varkappa_x(y))\to 0\quad\text{as}\quad n\to\infty.
\end{equation}
Using~\eqref{e:flow-flowing} and~\eqref{e:fiji}, we see that
$$
\psi_{x,-n}^{-1}\cdots\psi_{x,-1}^{-1}(\widetilde\varkappa_x(y))=
\psi_{x,-n}^{-1}\cdots\psi_{x,-1}^{-1}(w)+(s,0,0).
$$
Since $w\in W^u_{x,0}$, by part~(3) of Theorem~\ref{t:stun-adv} we have
$\psi_{x,-n}^{-1}\cdots\psi_{x,-1}^{-1}(w)\to 0$ as $n\to\infty$.
Together with~\eqref{e:fiji2} this shows
that $s=0$. Therefore
$\widetilde\varkappa_x(y)=w\in W^u_{x,0}$ which implies
that $y\in W_u(x)$ as needed.
\end{proof}

\subsection{Further properties of hyperbolic flows}
  \label{s:flows-more}

We now discuss some further properties of the stable/unstable manifolds constructed
in Theorem~\ref{t:stun-flows}. Throughout this section we assume
that the conditions of Theorem~\ref{t:stun-flows} hold.

\subsubsection{Weak stable/unstable manifolds}
  \label{s:flows-weak}

Let $\delta_1>0$ be the rescaling parameter used in the proof of Theorem~\ref{t:stun-flows},
see~\eqref{e:risky-charts}. Recall that $\delta_1$ is chosen small depending only on $\varphi^t,K$.
For each $x\in K$ define the \emph{weak unstable/stable manifolds}
$$
W_{u0}(x):=\bigcup_{|s|\leq 2\delta_1} \varphi^s(W_u(x)),\quad
W_{s0}(x):=\bigcup_{|s|\leq 2\delta_1} \varphi^s(W_s(x)).
$$
In the adapted chart $\widetilde\varkappa_x$ from~\eqref{e:risky-charts}, we have
(using~\eqref{e:flow-u-def}--\eqref{e:flow-s-def} and the fact that $d\widetilde\varkappa_x$ maps the generator $X$ of the flow
to $\delta_1^{-1}\partial_{x_0}$)
\begin{equation}
  \label{e:weak-in-chart}
\begin{aligned}
W_{u0}(x)=\widetilde\varkappa_x^{-1}(W^{u0}_{x,0}),&\quad
W^{u0}_{x,0}:=\{w+(s,0,0)\colon w\in W^u_{x,0},\ |s|\leq 2\};\\
W_{s0}(x)=\widetilde\varkappa_x^{-1}(W^{s0}_{x,0}),&\quad
W^{s0}_{x,0}:=\{w+(s,0,0)\colon w\in W^s_{x,0},\ |s|\leq 2\}
\end{aligned}
\end{equation}
where $W^u_{x,0}$, $W^s_{x,0}$ are the unstable/stable manifolds
for the maps $\widetilde \psi_x$, see the proof of Theorem~\ref{t:stun-flows}.
Since $W^u_{x,0}$, $W^s_{x,0}$ are graphs of functions of $x_1$, $x_2$
(see~\eqref{e:wuwsy}), we see that $W^{u0}_{x,0},W^{s0}_{x,0}$,
and thus $W_{u0}(x),W_{s0}(x)$, are embedded $C^N$ submanifolds
of dimensions $d_u+1,d_s+1$.
It is also clear that
\begin{equation}
  \label{e:weak-tangent}
T_x W_{u0}(x)=E_u(x)\oplus E_0(x),\quad
T_x W_{s0}(x)=E_s(x)\oplus E_0(x).
\end{equation}
It follows from statement~(4) in Theorem~\ref{t:stun-flows} that
the weak stable/unstable manifolds are invariant under the positive/negative integer
time maps of the flow $\varphi^t$:
\begin{equation}
  \label{e:weak-invariant}
\varphi^{-1}(W_{u0}(x))\subset W_{u0}(\varphi^{-1}(x)),\quad
\varphi^1(W_{s0}(x))\subset W_{s0}(\varphi^1(x)).
\end{equation}
We next have the following versions of the statements~(7)--(8) in Theorem~\ref{t:stun-flows}:
for all $y\in M$,
\begin{equation}
  \label{e:weak-characterized}
\begin{aligned}
d(\varphi^t(y),\varphi^t(x))\leq\varepsilon_0\quad\text{for all }t\leq 0
&\quad\Longrightarrow\quad y\in W_{u0}(x);\\
d(\varphi^t(y),\varphi^t(x))\leq\varepsilon_0\quad\text{for all }t\geq 0
&\quad\Longrightarrow\quad y\in W_{s0}(x).
\end{aligned}
\end{equation}
The properties~\eqref{e:weak-characterized} follow immediately from~\eqref{e:fiji}
(and its analog for stable manifolds) and~\eqref{e:weak-in-chart}.

Similarly to the proof of statement~(9) of Theorem~\ref{t:stun-maps}
one can show the following transversality properties:
if $x,y\in K$ and $d(x,y)\leq\varepsilon_0$ then
\begin{equation}
  \label{e:weak-transverse}
W_{s0}(x)\cap W_u(y),W_s(x)\cap W_{u0}(y)\quad\text{have exactly one point each}.
\end{equation}

\subsubsection{Quantitative statements}
  \label{s:flows-quant}

We now discuss quantitative versions of the statements of Theorem~\ref{t:stun-flows},
which are the analogs of~\eqref{e:cream-1}--\eqref{e:cream-3}. We fix $\tilde\nu$ such that
$$
0<\tilde\nu<\nu
$$
and allow the manifolds $W_u,W_s$ and the constant $\varepsilon_0$ to depend on $\tilde\nu$.

The analog of~\eqref{e:cream-1} is given by the following: there exists
a constant $C$ such that for all $x\in K$ and $t\geq 0$
\begin{equation}
  \label{e:milk-1}
\begin{aligned}
y,\tilde y\in W_u(x)&\quad\Longrightarrow\quad
d(\varphi^{-t}(y),\varphi^{-t}(\tilde y))\leq Ce^{-\tilde\nu t}d(y,\tilde y);\\
y,\tilde y\in W_s(x)&\quad\Longrightarrow\quad
d(\varphi^{t}(y),\varphi^{t}(\tilde y))\leq Ce^{-\tilde\nu t}d(y,\tilde y).
\end{aligned}
\end{equation}
To show~\eqref{e:milk-1}, it suffices to consider the case of integer~$t$. The
latter case is proved similarly to~\eqref{e:cream-1} (see~\S\ref{s:maps-proof}),
since~\eqref{e:stuna-2} still applies to the maps $\psi_{x,m}$.

The analog of~\eqref{e:cream-2} is given by the following: for all $x\in K$, $y\in M$,
$t\geq 0$, and $0\leq\sigma\leq\varepsilon_0$
\begin{equation}
  \label{e:milk-2}
\begin{aligned}
d(\varphi^{-s}(y),\varphi^{-s}(x))\leq\sigma\text{ for all }s\in [0,t]
&\quad\Longrightarrow\quad
d(y,W_{u0}(x))\leq Ce^{-\tilde\nu t}\sigma,\\
d(\varphi^{s}(y),\varphi^{s}(x))\leq\sigma\text{ for all }s\in [0,t]
&\quad\Longrightarrow\quad
d(y,W_{s0}(x))\leq Ce^{-\tilde\nu t}\sigma.
\end{aligned}
\end{equation}
This is proved by applying~\eqref{e:stuna-3} for the reduced maps $\omega_{x,m}$ defined in~\eqref{e:omega-x-def},
see the proof of~\eqref{e:fiji}.

Finally the analog of~\eqref{e:cream-3} is given by the following:
for all $x\in K$, $y\in M$, $t\geq 0$, and $0\leq \sigma\leq \varepsilon_0$,
\begin{equation}
  \label{e:milk-3}
\begin{gathered}
d(\varphi^s(y),\varphi^s(x))\leq\sigma\quad\text{for all}\quad s\in [-t,t]\\
\Longrightarrow\quad
d(y,\varphi^r(x))\leq Ce^{-\tilde\nu t}\sigma\quad\text{for some }r\in [-2\delta_1,2\delta_1].
\end{gathered}
\end{equation}
This is proved by applying~\eqref{e:stuna-4} to the reduced maps $\omega_{x,m}$,
arguing similarly to the proof of~\eqref{e:fiji}.

\subsubsection{Local invariance and global stable/unstable manifolds}
\label{s:flow-invariance}

In this section we discuss local invariance of the stable/unstable manifolds.
We first discuss invariance under the flow $\varphi^t$. Theorem~\ref{t:stun-flows}
does not imply that $\varphi^{-t}(W_u(x))\subset W_u(x)$,
$\varphi^t(W_s(x))\subset W_s(x)$ for non-integer $t\geq 0$,
however local versions of this statement are established in~\eqref{e:victor-1u}--\eqref{e:victor-2s} below.

Similarly to~\eqref{e:iterated-unstable}, \eqref{e:iterated-stable}
for each $x\in K$ and integer $k\geq 0$ define the iterated unstable/stable manifolds
\begin{equation}
  \label{e:iterated-stun-flows}
W^{(k)}_u(x):=\varphi^k(W_u(\varphi^{-k}(x))),\quad
W^{(k)}_s(x):=\varphi^{-k}(W_s(\varphi^k(x))).
\end{equation}
It follows from statement~(4) in Theorem~\ref{t:stun-flows}
that for all $k\geq 0$
$$
W^{(k)}_u(x)\subset W^{(k+1)}_u(x),\quad
W^{(k)}_s(x)\subset W^{(k+1)}_s(x).
$$
Local invariance of the unstable/stable manifolds under the flow is given by the following statements:
there exist $k_0\geq 0$ and $\varepsilon_1>0$ depending only on $\varphi^t,K$ such that
for all $x\in K$ and $s\in [-1,1]$
\begin{align}
  \label{e:victor-1u}
\varphi^s(W_u(x))&\subset W_u^{(k_0)}(\varphi^s(x)),\\
  \label{e:victor-1s}
\varphi^s(W_s(x))&\subset W_s^{(k_0)}(\varphi^s(x)),\\
  \label{e:victor-2u}
\varphi^s(W_u(x))\cap \overline B_d(\varphi^s(x),\varepsilon_1)&\subset W_u(\varphi^s(x)),\\
  \label{e:victor-2s}
\varphi^s(W_s(x))\cap \overline B_d(\varphi^s(x),\varepsilon_1)&\subset W_s(\varphi^s(x)).
\end{align}
Note that~\eqref{e:victor-1u}, \eqref{e:victor-2u} can be extended to all $s\leq 0$
and~\eqref{e:victor-1s}, \eqref{e:victor-2s} to all $s\geq 0$ using statement~(4) in Theorem~\ref{t:stun-flows}.
 
We show~\eqref{e:victor-1s}, \eqref{e:victor-2s}, with~\eqref{e:victor-1u}, \eqref{e:victor-2u} proved similarly.
We start with~\eqref{e:victor-1s}. Assume that $y\in W_s(x)$. Then
by~\eqref{e:milk-1} we have for all $t\geq 0$
\begin{equation}
  \label{e:milky-way-1}
d(\varphi^{t+s}(y),\varphi^{t+s}(x))\leq
Cd(\varphi^t(y),\varphi^t(x))\leq Ce^{-\tilde\nu t}.
\end{equation}
Therefore, $d(\varphi^{t+s}(y),\varphi^{t+s}(x))\to 0$ as $t\to\infty$ and there exists $k_0\geq 0$ such that
\begin{equation}
  \label{e:milky-way-2}
d(\varphi^{t+s+k_0}(y),\varphi^{t+s+k_0}(x))\leq \varepsilon_0\quad\text{for all}\quad t\geq 0.
\end{equation}
It follows from statement~(8) in Theorem~\ref{t:stun-flows} that 
$\varphi^{s+k_0}(y)\in W_s(\varphi^{s+k_0}(x))$ and thus $\varphi^s(y)\in W_s^{(k_0)}(\varphi^s(x))$ as needed.

To show~\eqref{e:victor-2s}, assume that $y\in W_s(x)$ and $d(\varphi^s(y),\varphi^s(x))\leq\varepsilon_1$.
Then~\eqref{e:milky-way-2} holds.
If $\varepsilon_1$ is small enough, then~\eqref{e:milky-way-2} holds also for all $t\in [-k_0,0]$,
and thus for all $t\geq -k_0$. It then follows from statement~(8) in Theorem~\ref{t:stun-flows}
that $\varphi^s(y)\in W_s(\varphi^s(x))$ as needed.

Next, we note that the statements~\eqref{e:intersector}, \eqref{e:tangentor},
\eqref{e:locally-unique-1}, and~\eqref{e:locally-unique-2} relating
the stable/unstable manifolds at different points are still valid in the case of flows,
with very similar proofs.

Finally, similarly to~\eqref{e:global} we can define the \emph{global unstable/stable manifolds}:
for $x\in K$,
\begin{equation}
  \label{e:global-flows}
W^{(\infty)}_u(x):=\bigcup_{k\geq 0}W_u^{(k)}(x),\quad
W^{(\infty)}_s(x):=\bigcup_{k\geq 0}W_s^{(k)}(x).
\end{equation}
By statements~(5)--(8) in Theorem~\ref{t:stun-flows}, these can be characterized as follows:
\begin{equation}
  \label{e:global-flows-char}
\begin{aligned}
y\in W^{(\infty)}_u(x)&\quad\Longleftrightarrow\quad
d(\varphi^t(y),\varphi^t(x))\to 0\quad\text{as }t\to -\infty;\\
y\in W^{(\infty)}_s(x)&\quad\Longleftrightarrow\quad
d(\varphi^t(y),\varphi^t(x))\to 0\quad\text{as }t\to \infty.
\end{aligned}
\end{equation}
The manifolds $W^{(\infty)}_u(x)$, $W^{(\infty)}_s(x)$ are $d_u$ and $d_s$-dimensional
immersed submanifolds without boundary in $M$. They are invariant under the flow
$\varphi^t$ and disjoint from each other.
  
\section{Examples}
  \label{s:examples}

In this section we provide two examples of hyperbolic systems:
geodesic flows on negatively curved surfaces (\S\ref{s:surf-neg}) and billiard ball maps
on Euclidean domains with concave boundary (\S\ref{s:billiard}).

\subsection{Surfaces of negative curvature}
  \label{s:surf-neg}

Let $(M,g)$ be a closed (compact without boundary) oriented surface.
To simplify the computations below, we will often use (positively oriented) \emph{isothermal coordinates}
$(x,y)$ in which the metric is conformally flat:
\begin{equation}
  \label{e:flatty}
g=e^{2G(x,y)}(dx^2+dy^2).
\end{equation}
Such coordinates exist locally near each point of $M$. In isothermal coordinates, the Gauss curvature
is given by
\begin{equation}
  \label{e:kurvy}
K(x,y)=-e^{-2G(x,y)}(\partial_x^2+\partial_y^2)G(x,y).
\end{equation}
We show below in Theorem~\ref{t:geodesic-hyperbolic} that if $K<0$ then the geodesic flow
on $(M,g)$ is hyperbolic. We define
the geodesic flow as the Hamiltonian flow
\begin{equation}
  \label{e:gerry}
\begin{aligned}
\varphi^t:=\exp(tX):S^*M\to S^*M,&\qquad
X:=H_p,\\
S^*M:=\{(x,\xi)\in T^*M\colon p(x,\xi)=1\},&\qquad
p(x,\xi):=|\xi|_g
\end{aligned}
\end{equation}
where $T^*M$ is the cotangent bundle of $M$ and $S^*M$ is the unit cotangent bundle (with respect to the metric $g$).
In local coordinates $(x,y)$, if $(\xi,\eta)$ are the corresponding momentum variables (i.e. coordinates
on the fibers of $T^*M$) then
\begin{equation}
  \label{e:hammy}
X=H_p=(\partial_\xi p)\partial_x+(\partial_\eta p)\partial_y
-(\partial_x p)\partial_\xi-(\partial_y p)\partial_\eta.
\end{equation}
In isothermal coordinates~\eqref{e:flatty} we have
\begin{equation}
  \label{e:poppy}
p(x,y,\xi,\eta)=e^{-G(x,y)}\sqrt{\xi^2+\eta^2}.
\end{equation}
\begin{theo}
  \label{t:geodesic-hyperbolic}
Let $(M,g)$ be a closed oriented surface with geodesic flow $\varphi^t:S^*M\to S^*M$ defined in~\eqref{e:gerry}.
Assume that the Gauss curvature $K$ is negative everywhere. Then
$\varphi^t$ is an Anosov flow, that is $\varphi^t$ is hyperbolic on the entire $S^*M$
in the sense of Definition~\ref{d:hyp-flows}.
\end{theo}
\Remark Theorem~\ref{t:geodesic-hyperbolic} extends to higher dimensional manifolds
of \emph{negative sectional curvature}~-- see for instance~\cite[Theorem~17.6.2]{KaHa}.
The orientability hypothesis is made only for convenience of the proof, one can
remove it for instance by passing to a double cover of $M$.

In the remainder of this section we prove Theorem~\ref{t:geodesic-hyperbolic}.
We first define a convenient frame on $S^*M$. Let $V$ be the vector field on $S^*M$ generating
rotations on the circle fibers (counterclockwise with respect to the fixed orientation).
If $(x,y)$ are isothermal coordinates~\eqref{e:flatty}, we use local coordinates
$(x,y,\theta)$ on $S^*M$ where $\theta$ is defined by
$$
\xi=e^{G(x,y)}\cos\theta,\quad
\eta=e^{G(x,y)}\sin\theta,
$$
and we have (here $X$ is the generator of the geodesic flow)
$$
V=\partial_\theta,\quad
X=e^{-G(x,y)}\big(\cos\theta\,\partial_x+\sin\theta\,\partial_y
+(\partial_y G(x,y)\cos\theta-\partial_x G(x,y)\sin\theta)\partial_\theta\big).
$$
Define the vector field
$$
X_\perp:=[X,V],
$$
in isothermal coordinates
$$
X_\perp=e^{-G(x,y)}\big(\sin\theta\,\partial_x-\cos\theta\,\partial_y
+(\partial_x G(x,y)\cos\theta+\partial_y G(x,y)\sin\theta)\partial_\theta\big).
$$
The vector fields $X,V,X_\perp$ form a global frame on $S^*M$ and we have
(using~\eqref{e:kurvy})
\begin{equation}
  \label{e:commy}
[X,V]=X_\perp,\quad
[X_\perp,V]=-X,\quad
[X,X_\perp]=-KV.
\end{equation}
For any vector field $W$ on $S^*M$, we have (by the standard properties of Lie bracket)
\begin{equation}
  \label{e:lie}
\partial_t(\varphi^{-t}_*W)=\varphi^{-t}_*[X,W]\quad\text{where}\quad
\varphi^{-t}_*W(\rho):=d\varphi^{-t}(\rho)W(\varphi^t(\rho)),\ \rho\in S^*M.
\end{equation}
It then follows from~\eqref{e:commy} that the following two-dimensional
subbundle of $T(S^*M)$ is invariant under the flow $\varphi^t$:
$$
E_{us}:=\Span(V,X_\perp).
$$
The space $E_{us}$ will be the direct sum of the stable and the unstable subspaces
for the flow $\varphi^t$. For a vector $v\in E_{us}(\rho)$, $\rho\in S^*M$, we define
its coordinates $(a,b)$ with respect to the frame $V,X_\perp$:
\begin{equation}
  \label{e:cordial}
v=aV(\rho)+bX_\perp(\rho).
\end{equation}
We have
the following differential equations for the action of $d\varphi^t$ on $E_{us}$
(which are a special case of Jacobi's equations):
\begin{lemm}
  \label{l:jacobi}
Let $\rho\in S^*M$, $v\in E_{us}(\rho)$, and denote
$$
\rho(t)=(x(t),\xi(t)):=\varphi^t(\rho)\in S^*M,\quad
v(t):=d\varphi^t(\rho)v\in E_{us}(\rho(t)).
$$
Let $a(t),b(t)$ be the coordinates of~$v(t)$ defined in~\eqref{e:cordial}.
Put $K(t):=K(x(t))$.
Then, denoting by dots derivatives with respect to~$t$,
the functions $a(t),b(t)$
satisfy the ordinary differential equation
\begin{equation}
  \label{e:diffy}
\dot a=K(t)b,\quad
\dot b=-a.
\end{equation}
\end{lemm}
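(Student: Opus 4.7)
The plan is to transport the time-dependent decomposition $v(t) = a(t) V(\rho(t)) + b(t) X_\perp(\rho(t))$ into the fixed tangent space $T_\rho(S^*M)$, where the coefficients can be differentiated against a basis that does not move with the flow. Applying $d\varphi^{-t}$ at $\rho(t)$ to both sides and using $d\varphi^{-t}(\rho(t)) v(t) = v$, the decomposition becomes
\begin{equation*}
v = a(t)\, \tilde V(t) + b(t)\, \tilde X_\perp(t), \qquad
\tilde V(t) := (\varphi^{-t}_* V)(\rho), \quad
\tilde X_\perp(t) := (\varphi^{-t}_* X_\perp)(\rho),
\end{equation*}
which is an identity between time-dependent vectors in the single vector space $T_\rho(S^*M)$ and hence can be differentiated coordinate-by-coordinate in $t$.

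Next I would compute the derivatives $\dot{\tilde V}(t), \dot{\tilde X_\perp}(t)$ using the Lie-derivative formula~\eqref{e:lie} together with the commutator relations~\eqref{e:commy}. Since $[X,V] = X_\perp$ and $[X, X_\perp] = -KV$ (with $K$ a scalar function on $M$, which passes through the pushforward to become the value $K(t) := K(x(t))$), one obtains $\dot{\tilde V}(t) = \tilde X_\perp(t)$ and $\dot{\tilde X_\perp}(t) = -K(t)\tilde V(t)$. Differentiating $v = a(t)\tilde V(t) + b(t)\tilde X_\perp(t)$ in $t$ and substituting yields
\begin{equation*}
0 = (\dot a(t) - K(t)\, b(t))\, \tilde V(t) + (\dot b(t) + a(t))\, \tilde X_\perp(t).
\end{equation*}

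To conclude, I would observe that $V(\rho(t))$ and $X_\perp(\rho(t))$ are pointwise linearly independent (they span the two-dimensional subspace $E_{us}(\rho(t))$) and $d\varphi^{-t}(\rho(t))$ is a linear isomorphism, so $\tilde V(t), \tilde X_\perp(t)$ remain linearly independent in $T_\rho(S^*M)$. This forces both coefficients in the displayed equation to vanish, giving $\dot a = K(t) b$ and $\dot b = -a$, which is~\eqref{e:diffy}. I do not expect a substantive obstacle here: the derivation is mechanical once the pullback identity is in place, and the only points requiring care are the sign convention already encoded in~\eqref{e:lie} and the Leibniz rule when pushing forward the product $KV$, which causes no trouble because $K$ is a scalar pulled back along the trajectory.
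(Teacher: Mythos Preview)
Your proposal is correct and follows essentially the same approach as the paper: both pull the identity $v(t)=a(t)V(\rho(t))+b(t)X_\perp(\rho(t))$ back to the fixed tangent space $T_\rho(S^*M)$ via $\varphi^{-t}_*$, differentiate in $t$ using~\eqref{e:lie} and the commutator relations~\eqref{e:commy}, and read off~\eqref{e:diffy} from linear independence. The only cosmetic difference is that the paper bundles the computation by introducing the global time-dependent field $W_t:=a(t)V+b(t)X_\perp$ and applying $\partial_t(\varphi^{-t}_*W_t)=\varphi^{-t}_*(\partial_tW_t+[X,W_t])$ in one step, whereas you differentiate $\tilde V(t)$ and $\tilde X_\perp(t)$ separately and then use the product rule; the underlying calculation is identical.
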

\begin{proof}
Define the vector field $W_t:=a(t)V+b(t)X_\perp$. Then
$W_t(\varphi^t(\rho))=v(t)=d\varphi^t(\rho)v$ and thus
$v=\varphi^{-t}_*W_t(\rho)$. By~\eqref{e:lie} we have
$$
0=\partial_t(\varphi^{-t}_*W_t)(\rho)=\varphi^{-t}_*\big(\partial_t W_t+[X,W_t]\big)(\rho).
$$
Using~\eqref{e:commy}, we get
$$
0=(\partial_t W_t+[X,W_t])(\rho(t))=\big(\dot a(t)-K(x(t))b(t)\big)V(\rho(t))+\big(\dot b(t)+a(t)\big)X_\perp(\rho(t)\big).
$$
This implies~\eqref{e:diffy}.
\end{proof}
Lemma~\ref{l:jacobi} immediately implies Theorem~\ref{t:geodesic-hyperbolic} in the special
case of \emph{constant curvature} $K\equiv -1$, with expansion rate $\nu=1$ in~\eqref{e:hypflow-exp}
and $E_u,E_s$ given by
$$
E_u=\Span(V-X_\perp),\quad
E_s=\Span(V+X_\perp).
$$

To handle the case of variable curvature we first construct cones in $E_{us}$ which are
invariant under the flow $\varphi^t$ for positive and negative times
(see Figure~\ref{f:cones}):
\begin{lemm}
  \label{l:cones-1}
For each $\rho\in S^*M$, define the closed cones
$\mathcal C^u_0(\rho),\mathcal C^s_0(\rho)\subset E_{us}(\rho)$:
$$
\mathcal C^u_0(\rho):=\{aV(\rho)+bX_\perp(\rho)\mid ab\leq 0\},\quad
\mathcal C^s_0(\rho):=\{aV(\rho)+bX_\perp(\rho)\mid ab\geq 0\}.
$$
Assume that $K\leq 0$ everywhere on $M$. Then for all $t\geq 0$
\begin{equation}
  \label{e:cones-1}
d\varphi^t(\mathcal C^u_0(\rho))\subset \mathcal C^u_0(\varphi^t(\rho)),\quad
d\varphi^{-t}(\mathcal C^s_0(\rho))\subset \mathcal C^s_0(\varphi^{-t}(\rho)).
\end{equation}
\end{lemm}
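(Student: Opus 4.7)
My plan is to reduce cone invariance to a single scalar inequality via Jacobi's equations from Lemma~\ref{l:jacobi}. Fix $\rho\in S^*M$ and $v\in E_{us}(\rho)$, and let $(a(t),b(t))$ denote the coordinates of $v(t)=d\varphi^t(\rho)v$ with respect to the frame $(V,X_\perp)$ along $\varphi^t(\rho)$. Define the product
\begin{equation*}
\Phi(t):=a(t)\,b(t).
\end{equation*}
The cones are tailor-made for this function: $v\in\mathcal C_0^u(\rho)$ iff $\Phi(0)\le 0$, and $v\in\mathcal C_0^s(\rho)$ iff $\Phi(0)\ge 0$, and analogously for $v(t)$ at every other time. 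So both invariance claims will reduce to monotonicity of $\Phi$.

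The computation is immediate from~\eqref{e:diffy}:
\begin{equation*}
\dot\Phi(t)=\dot a(t)b(t)+a(t)\dot b(t)=K(t)b(t)^2-a(t)^2.
\end{equation*}
Under the standing assumption $K\le 0$ on $M$, both terms on the right are $\le 0$, so $\dot\Phi(t)\le 0$ for all $t$; that is, $\Phi$ is non-increasing along every orbit.

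From this the two inclusions in~\eqref{e:cones-1} follow immediately. If $v\in\mathcal C_0^u(\rho)$, then $\Phi(0)\le 0$, hence $\Phi(t)\le 0$ for all $t\ge 0$, which means $v(t)\in\mathcal C_0^u(\varphi^t(\rho))$. If instead $v\in\mathcal C_0^s(\rho)$, then $\Phi(0)\ge 0$; since $\Phi$ is non-increasing, $\Phi(-t)\ge\Phi(0)\ge 0$ for all $t\ge 0$, so $d\varphi^{-t}(\rho)v=v(-t)\in\mathcal C_0^s(\varphi^{-t}(\rho))$.

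There is no real obstacle here: once one identifies $\Phi=ab$ as the quantity whose sign defines the two cones, Jacobi's equations do all the work, and the hypothesis $K\le 0$ enters exactly at the step where both terms in $\dot\Phi=Kb^2-a^2$ need to have a favorable sign. The one thing to note for the next lemma is that this argument is not strict: $\dot\Phi$ vanishes on the axis $\{b=0\}$ when $a\ne 0$, so strict invariance of the interiors of the cones under $K<0$ will require a slightly sharper quantitative bound, but this is not needed for the statement at hand.
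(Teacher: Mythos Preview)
Your proof is correct and essentially identical to the paper's: both compute $\partial_t(ab)=Kb^2-a^2\le 0$ from the Jacobi equations~\eqref{e:diffy} and read off cone invariance directly. One small slip in your closing remark: on the axis $\{b=0\}$ with $a\ne 0$ you have $\dot\Phi=-a^2<0$, not zero; the place where $\dot\Phi$ can vanish (for $K<0$) is the origin, and for $K=0$ the whole axis $\{a=0\}$.
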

\begin{proof}
In the notation of Lemma~\ref{l:jacobi} we have
$$
\partial_t(a(t)b(t))=-a(t)^2+K(x(t))b(t)^2\leq 0
$$
and~\eqref{e:cones-1} follows immediately.
\end{proof}
We now prove an upgraded version of Lemma~\ref{l:cones-1}, constructing invariant cones
on which the differentials $d\varphi^t$, $d\varphi^{-t}$ are expanding. Fix small constants $\zeta>0$,
$\gamma>0$ to be chosen later in Lemma~\ref{l:cones-2}. Define the norm $|\bullet|$
on the fibers of $E_{us}$ as follows:
$$
|aV(\rho)+bX_\perp(\rho)|:=\sqrt{\zeta a^2+b^2}.
$$
Define also the following dilation invariant function $\Theta$ on the fibers of $E_{us}\setminus 0$:
$$
\Theta(aV(\rho)+bX_\perp(\rho)):={ab\over \zeta a^2+b^2}.
$$
The upgraded invariant cones are constructed in
\begin{figure}
\includegraphics[scale=0.25]{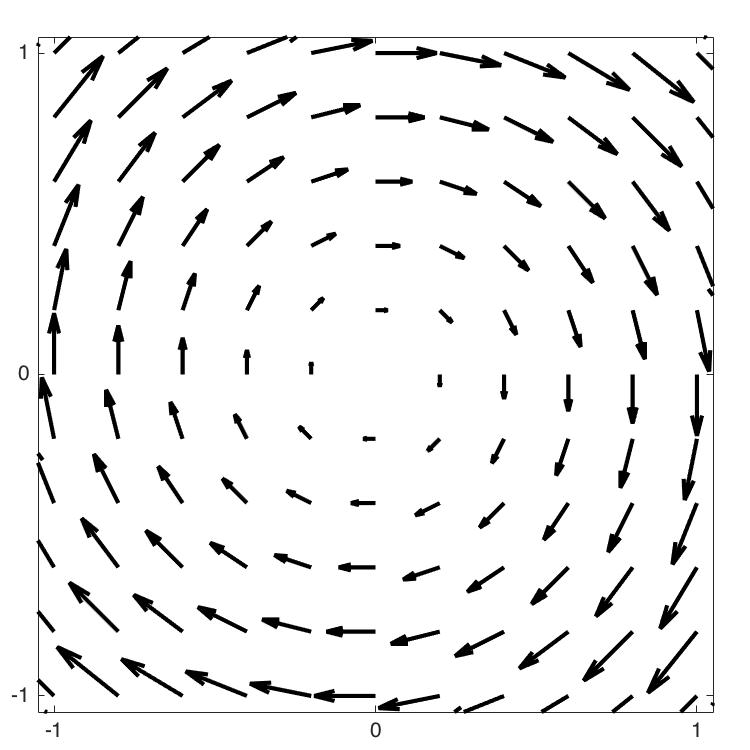}\qquad
\includegraphics[scale=0.25]{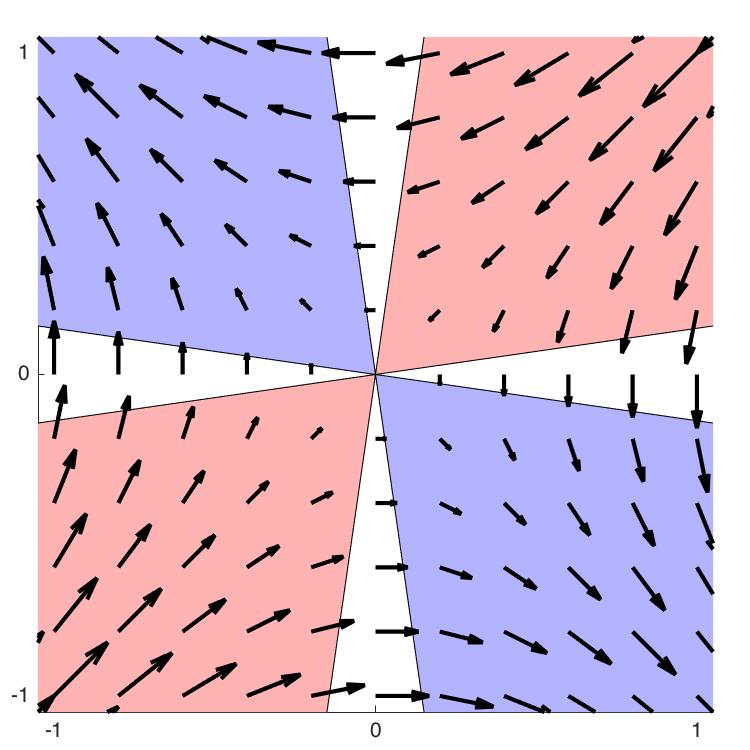}
\caption{Direction fields for the equation~\eqref{e:diffy}
for $K=1$ (left) and $K=-1$ (right) in the $(a,b)$ plane.
The cones on the right are $\mathcal C^u_\gamma$ (blue)
and $\mathcal C^s_\gamma$ (red).}
\label{f:cones}
\end{figure}
%
\begin{lemm}
  \label{l:cones-2}
Assume that $K<0$ everywhere on $M$. Then there exist $\zeta>0$, $\gamma>0$, $\nu>0$ such that
the closed cones
$\mathcal C^u_\gamma(\rho),\mathcal C^s_\gamma(\rho)\subset E_{us}(\rho)$
defined by (see Figure~\ref{f:cones})
$$
\mathcal C^u_\gamma(\rho):=\{v\in E_{us}(\rho)\colon \Theta(v)\leq -\gamma\}\cup\{0\},\quad
\mathcal C^s_\gamma(\rho):=\{v\in E_{us}(\rho)\colon \Theta(v)\geq \gamma\}\cup\{0\}
$$
have the following properties for all $\rho\in S^*M$ and $t\geq 0$
\begin{align}
  \label{e:coney-1}
d\varphi^t(\rho)\mathcal C^u_\gamma(\rho)&\subset \mathcal C^u_\gamma(\varphi^t(\rho));\\
  \label{e:coney-2}
d\varphi^{-t}(\rho)\mathcal C^s_\gamma(\rho)&\subset \mathcal C^s_\gamma(\varphi^{-t}(\rho));\\
  \label{e:coney-3}
|d\varphi^t(\rho)v|&\geq e^{\nu t}|v|\quad\text{for all}\quad v\in \mathcal C^u_\gamma(\rho);\\
  \label{e:coney-4}
|d\varphi^{-t}(\rho)v|&\geq e^{\nu t}|v|\quad\text{for all}\quad v\in \mathcal C^s_\gamma(\rho).
\end{align}
\end{lemm}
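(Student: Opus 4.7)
The plan is to reduce everything to an ODE analysis in the $(a,b)$-plane via Lemma~\ref{l:jacobi}, and then to do two model computations: one for how $\Theta$ evolves (to get cone invariance) and one for how $|v|^2$ evolves (to get expansion). Since $M$ is compact and $K<0$ everywhere, I will fix constants $0<\kappa_0\leq\kappa_1$ with $\kappa_0\leq|K|\leq\kappa_1$ on~$M$.

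First, along any solution of $\dot a=K(t)b$, $\dot b=-a$, differentiating $ab$ and $\zeta a^2+b^2$ separately should yield
$$
\partial_t\Theta=\frac{(a^2+Kb^2)(b^2-\zeta a^2)}{(\zeta a^2+b^2)^2},\qquad
\frac{\partial_t|v|^2}{|v|^2}=2(\zeta K-1)\Theta.
$$
The factorization of $\partial_t\Theta$ is what will make the whole argument clean.

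Next, for cone invariance I will examine the sign of each factor on the boundaries. Writing $r=b/a$, the equation $\Theta=-\gamma$ reads $r=-\gamma(\zeta+r^2)$, whose two roots satisfy $r_+r_-=\zeta$ and $r_++r_-=-1/\gamma$; thus one is small ($r_+^2<\zeta$) and the other is large ($r_-^2>\zeta$, with $r_-^2\to\infty$ as $\gamma\to 0$). I will pick $\zeta<1/\kappa_1$ to force $1+Kr_+^2>0$ on the small-root ray, and then $\gamma$ small enough that $r_-^2>1/\kappa_0$ to force $1+Kr_-^2<0$ on the large-root ray. Since $r^2-\zeta$ changes sign between the two rays, on each boundary ray the product of the two factors is strictly negative, so $\partial_t\Theta<0$ along $\{\Theta=-\gamma\}$. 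The same reasoning on $\{\Theta=\gamma\}$ (where $r,r'>0$) gives $\partial_t\Theta<0$ there as well. This shows $\{\Theta\leq-\gamma\}$ is forward invariant and $\{\Theta\geq\gamma\}$ is backward invariant, giving~\eqref{e:coney-1} and~\eqref{e:coney-2}.

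For the expansion bounds~\eqref{e:coney-3} and~\eqref{e:coney-4} I will use the second identity together with $\zeta K-1\leq-1$: on $\mathcal C^u_\gamma$ the bound $\Theta\leq-\gamma$ gives $\partial_t\log|v|^2\geq 2\gamma$, which combined with forward invariance of the cone and integration yields $|d\varphi^t v|\geq e^{\gamma t}|v|$. The stable case is the same calculation run backward in time. So $\nu:=\gamma$ works. The main technical point to get right is the coordinated choice of $\zeta$ and $\gamma$ so that both factors in the expression for $\partial_t\Theta$ switch sign correctly on each boundary ray; once this is pinned down from $\kappa_0\leq|K|\leq\kappa_1$, the invariance and expansion assertions both fall out of the two ODE identities above.
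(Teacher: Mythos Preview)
Your proposal is correct and follows essentially the same approach as the paper: both derive the ODEs for $R=|v|^2$ and $\Theta$ from Lemma~\ref{l:jacobi} and read off cone invariance and exponential growth from them. The one difference is in how $\dot\Theta<0$ is verified on the cone boundary: you use the neat factorization $\dot\Theta=(a^2+Kb^2)(b^2-\zeta a^2)/R^2$ and check signs on each of the two boundary rays, whereas the paper rewrites the same quantity as $\dot\Theta=-(a^2-Kb^2)/R+2(1-\zeta K)\Theta^2$, takes the explicit choice $\zeta=1/K_1$, and bounds it uniformly by $-K_0+4\Theta^2$, which is negative at $|\Theta|=\gamma:=\sqrt{K_0}/3$. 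The paper's route avoids the case split and pins down concrete constants in one stroke; your factorization is equally valid and arguably more transparent about why the boundary is repelling. The growth estimate is handled the same way in both (the paper records the slightly sharper rate $\nu=\gamma(1+\zeta K_0)$, but your $\nu=\gamma$ suffices).
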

\begin{proof}
We fix constants $K_0,K_1>0$ such that 
$$
0<K_0\leq -K(x)\leq K_1\quad\text{for all}\quad x\in M
$$
and put
\begin{equation}
  \label{e:zeta-gamma-def}
\zeta:={1\over K_1},\quad
\gamma:={\sqrt K_0\over 3},\quad
\nu:=\gamma(1+\zeta K_0).
\end{equation}
Let $\rho\in S^*M$, $v\in E_{us}(\rho)$, and
put $\rho(t):=\varphi^t(\rho)$, $v(t)=d\varphi^t(\rho)v$. We write
$v(t)=a(t)V(\rho(t))+b(t)X_\perp(\rho(t))$ and recall that
$a(t),b(t)$ satisfy the differential equations~\eqref{e:diffy}.
Denote $K(t):=K(x(t))$ where $\rho(t)=(x(t),\xi(t))$ and
$$
R(t):=|v(t)|^2=\zeta a(t)^2+b(t)^2,\quad
\Theta(t):=\Theta(v(t))={a(t)b(t)\over \zeta a(t)^2+b(t)^2}.
$$
Then it follows from~\eqref{e:diffy} that (denoting by dots derivatives with respect to~$t$)
\begin{equation}
  \label{e:dotty}
\dot R=-2(1-\zeta K(t))\Theta R,\quad
\dot\Theta=-{a^2-Kb^2\over R}+2(1-\zeta K(t))\Theta^2.
\end{equation}
Therefore
$$
\dot\Theta\leq -{a^2+K_0b^2\over R}+2(1+\zeta K_1)\Theta^2
\leq -K_0+4\Theta^2.
$$
Thus if $|\Theta(t)|=\gamma$ for some $t$, then
$\dot\Theta(t)<0$. In particular, if $\Theta(0)\leq -\gamma$,
then $\Theta(t)\leq -\gamma$ for all $t\geq 0$, which implies~\eqref{e:coney-1}.
Similarly if $\Theta(0)\geq \gamma$, then
$\Theta(t)\geq\gamma$ for all $t\leq 0$, which implies~\eqref{e:coney-2}.

We next prove~\eqref{e:coney-3}. Assume that $v\in \mathcal C^u_\gamma(\rho)\setminus 0$,
then $\Theta(t)\leq -\gamma$ for all $t\geq 0$. 
From~\eqref{e:dotty} we have
$$
\dot R(t)\geq 2\gamma(1+\zeta K_0)R(t)= 2\nu R(t)\quad\text{for all}\quad t\geq 0.
$$
Therefore $R(t)\geq e^{2\nu t}R(0)$ for all $t\geq 0$, which gives~\eqref{e:coney-3}.
The bound~\eqref{e:coney-4} is proved similarly.
\end{proof}
We finally use the cones from Lemma~\ref{l:cones-2} to construct the stable/unstable spaces,
finishing the proof of Theorem~\ref{t:geodesic-hyperbolic}:
\begin{figure}
\includegraphics[scale=0.35]{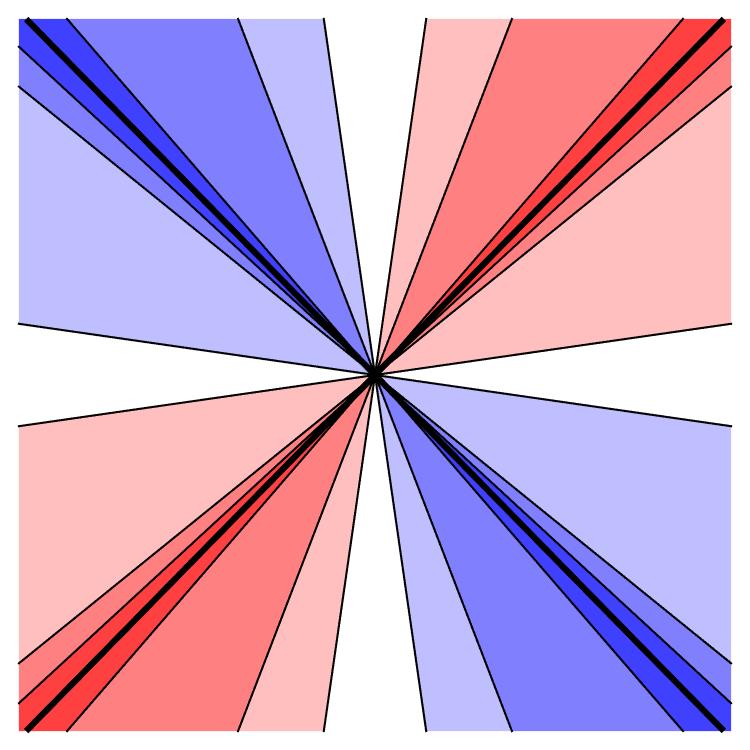}
\caption{The cones $\mathcal C^u_{\gamma,t}(\rho)$ (blue) and $\mathcal C^s_{\gamma,t}(\rho)$ (red) for several
values of~$t$, with the darker colors representing larger values of~$t$. The solid lines are
the spaces $E_u(\rho),E_s(\rho)$.}
\label{f:manycones}
\end{figure}
%
\begin{lemm}
  \label{l:geodfinal}
Let $\gamma$ be chosen in Lemma~\ref{l:cones-2}.
For each $\rho\in S^*M$ and $t\geq 0$ define the subsets of $E_{us}(\rho)$
$$
\mathcal C^u_{\gamma,t}(\rho):=d\varphi^t(\varphi^{-t}(\rho))\mathcal C^u_\gamma(\varphi^{-t}(\rho)),\quad
\mathcal C^s_{\gamma,t}(\rho):=d\varphi^{-t}(\varphi^{t}(\rho))\mathcal C^s_\gamma(\varphi^{t}(\rho)),
$$
and consider their intersections (see Figure~\ref{f:manycones})
\begin{equation}
  \label{e:geodfinal-2}
E_u(\rho):=\bigcap_{t\geq 0}\mathcal C^u_{\gamma,t}(\rho),\quad
E_s(\rho):=\bigcap_{t\geq 0}\mathcal C^s_{\gamma,t}(\rho).
\end{equation}
Then $E_u(\rho),E_s(\rho)$ are one-dimensional subspaces of $E_{us}(\rho)$
which satisfy the conditions of Definition~\ref{d:hyp-flows}.
\end{lemm}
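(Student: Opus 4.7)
The plan is to extract $E_u(\rho)$ and $E_s(\rho)$ from the nested families of cones, then verify the splitting, flow-invariance, and exponential bounds required by Definition~\ref{d:hyp-flows}.

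First I would check that $\mathcal C^u_{\gamma,t}(\rho)$ and $\mathcal C^s_{\gamma,t}(\rho)$ are decreasing in $t$. For $s>t\geq 0$, rewriting
\[
\mathcal C^u_{\gamma,s}(\rho) = d\varphi^t(\varphi^{-t}(\rho))\,d\varphi^{s-t}(\varphi^{-s}(\rho))\mathcal C^u_\gamma(\varphi^{-s}(\rho))
\]
and applying the cone invariance \eqref{e:coney-1} at $\varphi^{-s}(\rho)$ with time $s-t$ gives $\mathcal C^u_{\gamma,s}(\rho)\subset \mathcal C^u_{\gamma,t}(\rho)$; the stable family is symmetric. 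Each cone is closed and contains nonzero vectors, so $E_u(\rho),E_s(\rho)$ are nonempty closed cones by compactness on the unit sphere in $E_{us}(\rho)$.

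The central step is one-dimensionality, which I would prove by combining area preservation with the expansion bound \eqref{e:coney-3}. The linear ODE \eqref{e:diffy} has trace-free generator, so by Liouville's theorem $d\varphi^t$ has determinant one in the $(V,X_\perp)$ frame; equivalently, the area form on $E_{us}$ (induced by $V,X_\perp$ and the norm $|\cdot|$) is preserved by $d\varphi^t$. Given $u_1,u_2\in \mathcal C^u_\gamma(\varphi^{-t}(\rho))$, the images $v_i := d\varphi^t u_i\in\mathcal C^u_\gamma(\rho)$ satisfy $|v_1\wedge v_2|=|u_1\wedge u_2|$ while $|v_i|\geq e^{\nu t}|u_i|$, so
\[
\sin\angle(v_1,v_2) = \frac{|v_1\wedge v_2|}{|v_1||v_2|} \leq e^{-2\nu t}\sin\angle(u_1,u_2);
\]
the angular diameter of $\mathcal C^u_{\gamma,t}(\rho)$ is therefore $O(e^{-2\nu t})$, and the nested intersection $E_u(\rho)$ collapses to a single line. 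The argument for $E_s(\rho)$ is identical. Since $\mathcal C^u_\gamma(\rho)\cap \mathcal C^s_\gamma(\rho) = \{0\}$, we get $E_u(\rho)\oplus E_s(\rho) = E_{us}(\rho)$, which together with $E_{us}\oplus \mathbb R X = T_\rho S^*M$ yields the splitting \eqref{e:hypflow-split}.

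Flow-invariance $d\varphi^s(\rho)E_u(\rho) = E_u(\varphi^s(\rho))$ is a direct unwinding: for $v\in E_u(\rho)$ and any $t\geq 0$, the identity $d\varphi^{-t}(\varphi^s(\rho))\circ d\varphi^s(\rho) = d\varphi^{s-t}(\rho)$ combined with either \eqref{e:coney-1} (if $t\leq s$) or the defining property $v\in \mathcal C^u_{\gamma,t-s}(\rho)$ (if $t\geq s$) shows $d\varphi^s(\rho)v\in\mathcal C^u_{\gamma,t}(\varphi^s(\rho))$. The exponential estimate \eqref{e:hypflow-exp} follows immediately from \eqref{e:coney-3}: for $v\in E_u(\rho)$ and $t\leq 0$, applying \eqref{e:coney-3} at $\varphi^t(\rho)$ with time $|t|$ yields $|v|\geq e^{\nu|t|}|d\varphi^t v|$; the stable case is symmetric. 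The main obstacle I anticipate is the one-dimensionality step — the area-preservation trick trivializes it, but one must justify that the determinant-one property translates into preservation of the norm-induced area form on $E_{us}$, which holds because in the frame $(V,X_\perp)$ the norm $|aV+bX_\perp|^2=\zeta a^2+b^2$ has coefficients independent of base point, so its area form is a constant multiple of $V^*\wedge X_\perp^*$.
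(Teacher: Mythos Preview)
Your proof is correct, and your argument for one-dimensionality is genuinely different from the paper's. The paper does not use area preservation at all: instead it first picks a line $V_u(\rho)\subset E_u(\rho)$ by a Grassmannian compactness argument, then for arbitrary $v\in E_u(\rho)$ writes $v=v_1+v_2$ with $v_1\in V_u(\rho)$ and $v_2$ lying on a fixed line inside $\mathcal C^s_\gamma(\rho)$, and plays the contraction of $v,v_1$ under $d\varphi^{-t}$ (from~\eqref{e:coney-3}) against the expansion of $v_2$ under $d\varphi^{-t}$ (from~\eqref{e:coney-4}) to force $v_2=0$.

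Your area-preservation trick is slicker in this two-dimensional setting and yields the quantitative bonus that the angular aperture of $\mathcal C^u_{\gamma,t}(\rho)$ shrinks like $e^{-2\nu t}$. The paper's approach, by contrast, uses only the cone-expansion estimates \eqref{e:coney-3}--\eqref{e:coney-4} and never invokes the trace-free structure of~\eqref{e:diffy}; this makes it the more portable argument, since it carries over verbatim to higher-dimensional situations (and to perturbations of Anosov flows, as the paper remarks) where no volume form on the transverse bundle need be preserved. Your invariance and hyperbolicity verifications match the paper's.
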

\begin{proof}
We show the properties of $E_u(\rho)$. The properties of $E_s(\rho)$ are proved similarly
and the transversality of $E_u(\rho)$ and $E_s(\rho)$ follows from the fact that
$E_u(\rho)\subset \mathcal C^u_\gamma(\rho)$, $E_s(\rho)\subset \mathcal C^s_\gamma(\rho)$,
and $\mathcal C^u_\gamma(\rho)\cap \mathcal C^s_\gamma(\rho)=\{0\}$.

It follows from~\eqref{e:coney-1} that
$$
\mathcal C^u_{\gamma,s}(\rho)\subset\mathcal C^u_{\gamma,t}(\rho)\quad\text{when}\quad s\geq t\geq 0.
$$
We first claim that $E_u(\rho)$ contains a one-dimensional subspace of $E_{us}(\rho)$. Indeed,
let $\mathscr G$ be the Grassmanian of all one-dimensional subspaces of $E_{us}(\rho)$
and $\mathscr V_t\subset\mathscr G$ consist of the subspaces which are contained in $\mathcal C^u_{\gamma,t}(\rho)$.
Then $\mathscr V_s\subset\mathscr V_t$ for $s\geq t$ and all the sets $\mathscr V_t$ are compact.
Moreover, each $\mathscr V_t$ is nonempty since (recalling~\eqref{e:zeta-gamma-def})
\begin{equation}
  \label{e:coller0}
V^0_u(\rho):=\{
aV(\rho)+bX_\perp(\rho)\mid b=-a\sqrt{\zeta}\}
\subset \mathcal C^u_\gamma(\rho).
\end{equation}
Therefore the intersection $\bigcap_{t\geq 0}\mathscr V_t\subset \mathscr G$ is nonempty. Take an element
$V_u(\rho)$ of this intersection, then $V_u(\rho)$ is a one-dimensional subspace of $E_{us}(\rho)$
and $V_u(\rho)\subset E_u(\rho)$.

We now claim that
\begin{equation}
  \label{e:coller1}
E_u(\rho)=V_u(\rho).
\end{equation}
For that, it suffices to show that every $v\in E_u(\rho)$ lies in $V_u(\rho)$.
Define the one-dimensional space $V^0_s(\rho)\subset \mathcal C^s_\gamma(\rho)$ similarly to~\eqref{e:coller0}
but putting $b=a\sqrt{\zeta}$. Since $V_u(\rho)\subset \mathcal C^u_\gamma(\rho)$, the spaces
$V_u(\rho)$ and $V^0_s(\rho)$ are transverse to each other. Thus we can write
$$
v=v_1+v_2\quad\text{for some}\quad
v_1\in V_u(\rho),\quad
v_2\in V^0_s(\rho).
$$
Denote
$$
v(t):=d\varphi^{-t}(\rho)v,\quad
v_1(t):=d\varphi^{-t}(\rho)v_1,\quad
v_2(t):=d\varphi^{-t}(\rho) v_2.
$$
Since $v,v_1\in E_u(\rho)$, we have $v(t),v_1(t)\in\mathcal C^u_\gamma(\varphi^{-t}(\rho))$ for all $t\geq 0$.
It follows from~\eqref{e:coney-3} applied to $v(t),v_1(t)$ that
\begin{equation}
  \label{e:viper-1}
|v_2(t)|\leq |v(t)|+|v_1(t)|\leq e^{-\nu t}(|v|+|v_1|)\quad\text{for all}\quad t\geq 0.
\end{equation}
On the other hand, since $v_2\in \mathcal C^s_\gamma(\rho)$ we have by~\eqref{e:coney-4}
\begin{equation}
  \label{e:viper-2}
|v_2|\leq e^{-\nu t}|v_2(t)|\quad\text{for all}\quad t\geq 0.
\end{equation}
Combining~\eqref{e:viper-1} and~\eqref{e:viper-2} and letting $t\to\infty$ we get $v_2=0$, thus
$v=v_1\in V_u(\rho)$ which gives~\eqref{e:coller1}.

Now, \eqref{e:coller1} implies immediately that $E_u(\rho)$ is a one-dimensional subspace of $E_{us}(\rho)$.
We have $d\varphi^{-t}(\rho)E_u(\rho)\subset E_u(\varphi^{-t}(\rho))$ for all $t\geq 0$, which gives the invariance property~\eqref{e:hypflow-inv}.
The expansion property~\eqref{e:hypflow-exp}
follows from~\eqref{e:coney-3} and the inclusion $E_u(\rho)\subset\mathcal C^u_\gamma(\rho)$.
\end{proof}
\Remark The proof of Lemma~\ref{l:geodfinal} can be used to show the stability of Anosov maps/flows under perturbations,
namely a small $C^N$ perturbation of an Anosov map/flow is still Anosov. This uses the fact that (a slightly modified version of)
the properties~\eqref{e:coney-1}--\eqref{e:coney-4} is stable under perturbations; note it is enough to require these properties
for $0\leq t\leq 1$. See~\cite[Corollary~6.4.7 and Proposition~17.4.4]{KaHa} for details.

\subsection{A simple case of the billiard ball map}
  \label{s:billiard}
  
We finally briefly discuss two-dimensional billiard ball maps.
Let $\Omega\subset\mathbb R^2$ be a domain with smooth boundary,
referring the reader to~\cite{ChernovBook} for a comprehensive treatment and history of the subject.
We do not require $\Omega$ to be bounded but we require that the boundary $\partial\Omega$ be compact;
this implies that either $\Omega$ is compact (\emph{interior case}) or $\mathbb R^2\setminus\Omega^\circ$
is compact (\emph{exterior case}).

The boundary $\partial\Omega$ is diffeomorphic to the union of finitely many circles.
We parametrize it locally by a real number variable $\theta$, denoting the parametrization
$$
\mathbf x:\partial\Omega\to \mathbb R^2.
$$
We assume that $\mathbf x$ is a unit speed parametrization:
$$
|\mathbf v|\equiv 1\quad\text{where}\quad
\mathbf v(\theta):=\partial_\theta \mathbf x(\theta).
$$
We choose the inward pointing unit normal vector field
$$
\mathbf n:\partial\Omega\to\mathbb R^2.
$$
We assume that $(\mathbf v,\mathbf n)$ is a positively oriented frame
on $\mathbb R^2$ at every point of $\partial\Omega$. If $\partial\Omega$ consists of a single
circle, then the direction of increasing $\theta$ is counterclockwise in the
interior case and clockwise in the exterior case. For each $\theta\in\partial\Omega$
we define the \emph{curvature} $K(\theta)$ of the boundary at $\theta$ by the following identity:
$$
\partial_\theta\mathbf v(\theta)=K(\theta)\mathbf n(\theta).
$$
We say that the boundary is (strictly) \emph{convex} at some point $\theta$ if
$K(\theta)>0$ and (strictly) \emph{concave} if $K(\theta)<0$.
See Figure~\ref{f:obstacles}.
\begin{figure}
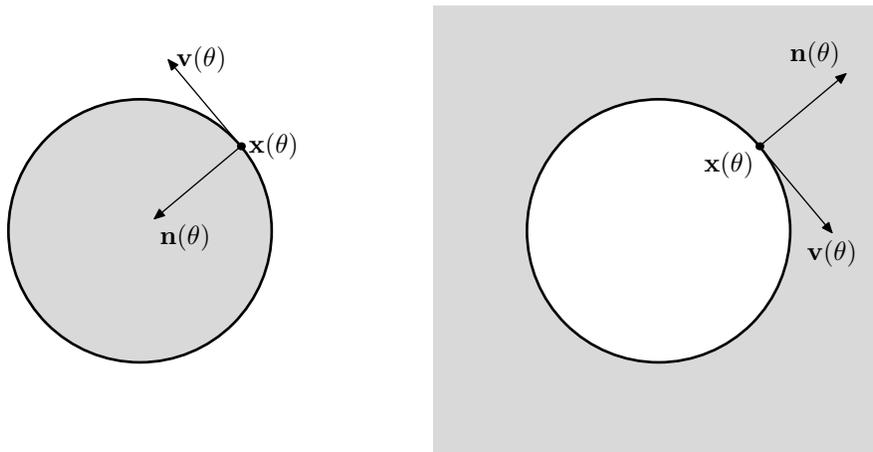

\includegraphics{stunnote.3}\qquad
\includegraphics{stunnote.4}
\caption{The interior (left) and exterior (right) case when the boundary
is a circle of radius~$r$. The domain $\Omega$ is shaded.
The curvature is equal to $1/r$ in the interior case
and to $-1/r$ in the exterior case.}
\label{f:obstacles}
\end{figure}

The billiard ball map acts on the phase space $M$ which consists
of inward pointing unit vectors at the boundary:
$$
M=\{(\theta,\mathbf w)\mid \theta\in\partial\Omega,\
\mathbf w\in\mathbb R^2,\
|\mathbf w|=1,\
\langle \mathbf w,\mathbf n(\theta)\rangle\geq 0\}.
$$
The boundary of the phase space $\partial M$ consists of \emph{glancing} directions,
where $\mathbf w$ is tangent to $\partial\Omega$. These directions are what makes
billiards much more difficult to handle than closed manifolds; in these notes we ignore
entirely the complications resulting from glancing, restricting to the interior
of $M$. The interior $M^\circ$ can be parametrized by two numbers $\theta\in\partial\Omega$
and $\sigma\in (-1,1)$ defined by
$$
\sigma:=\langle \mathbf w,\mathbf v(\theta)\rangle.
$$
In particular $\sigma=0$ corresponds to vectors which are orthogonal to the boundary.

We now define the billiard ball map
$$
\varphi:M^\circ\to M^\circ,\quad
\varphi(\theta_1,\sigma_1)=(\theta_2,\sigma_2)
$$
where $\mathbf x(\theta_2)$ is the first intersection of $\partial\Omega$
with the ray $\{\mathbf x(\theta_1)+t\mathbf w_1\mid t>0\}$
and $\mathbf w_2$ is obtained by the law of reflection~-- see Figure~\ref{f:billiards}.
The map $\varphi$ is in fact only defined on an open subset of $M^\circ$ since the ray
might intersect $\partial\Omega$ in a glancing direction or (in the exterior case)
may escape to infinity without intersecting $\partial\Omega$, we ignore here the
issues arising from this fact.
\begin{figure}
\includegraphics{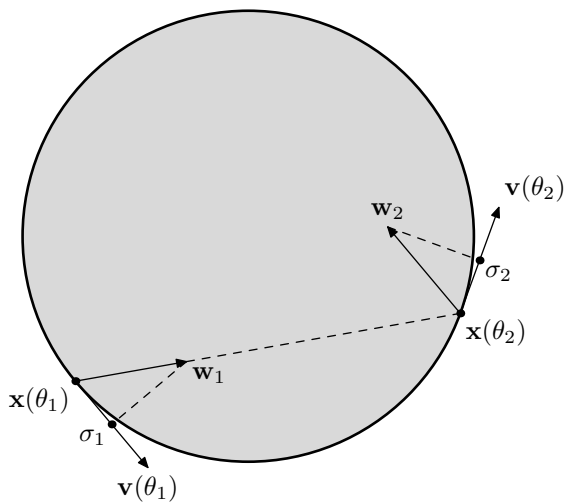}
\caption{The billiard ball map for the disk (interior case), given in case of the unit
disk by the formulas $\theta_2=\theta_1+2\arccos\sigma_1$, $\sigma_2=\sigma_1$.}
\label{f:billiards}
\end{figure}

Define the function on $\partial\Omega\times\partial\Omega$
$$
\Phi(\theta_1,\theta_2):=|\mathbf x(\theta_1)-\mathbf x(\theta_2)|.
$$
Then $\Phi$ is the generating function of $\varphi$, namely
\begin{equation}
  \label{e:billimap-1}
\varphi(\theta_1,\sigma_1)=(\theta_2,\sigma_2)\quad\Longleftrightarrow\quad
\sigma_1=-\partial_{\theta_1}\Phi(\theta_1,\theta_2),\
\sigma_2=\partial_{\theta_2}\Phi(\theta_1,\theta_2).
\end{equation}
(Strictly speaking, we should restrict the right-hand side above to $\theta_1\neq\theta_2$ such that
the interior of the line segment between $\mathbf x(\theta_1)$ and $\mathbf x(\theta_2)$
does not intersect $\partial\Omega$.)

We have
$$
\begin{gathered}
\partial_{\theta_1}^2\Phi={1-\sigma_1^2\over\Phi}-K(\theta_1)\sqrt{1-\sigma_1^2},\quad
\partial_{\theta_2}^2\Phi={1-\sigma_2^2\over\Phi}-K(\theta_2)\sqrt{1-\sigma_2^2},\\
\partial_{\theta_1\theta_2}\Phi={\sqrt{(1-\sigma_1^2)(1-\sigma_2^2)}\over\Phi}.
\end{gathered}
$$
Therefore, if $\varphi(\theta_1,\sigma_1)=(\theta_2,\sigma_2)$ and we denote
$\ell:=\Phi(\theta_1,\theta_2)$ (the distance traveled between the bounces)
and $K_1:=K(\theta_1)$, $K_2:=K(\theta_2)$, then
\begin{equation}
  \label{e:huge-matrix}
d\varphi(\theta_1,\sigma_1)=\begin{pmatrix}\displaystyle
{\ell K_1-\sqrt{1-\sigma_1^2}\over \sqrt{1-\sigma_2^2}} & \displaystyle
-{\ell\over \sqrt{(1-\sigma_1^2)(1-\sigma_2^2)}}\\ 
\noalign{\bigskip}
\displaystyle
K_1\sqrt{1-\sigma_2^2}+K_2\sqrt{1-\sigma_1^2}-\ell K_1K_2 & \displaystyle
{\ell K_2-\sqrt{1-\sigma_2^2}\over \sqrt{1-\sigma_1^2}}
\end{pmatrix}.
\end{equation}
Note that $\det d\varphi\equiv 1$.
\begin{figure}
\includegraphics{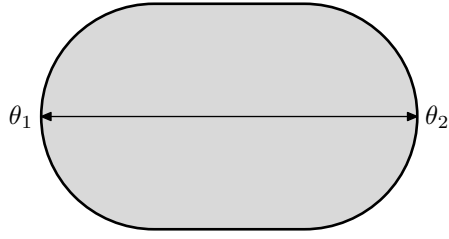}
\caption{A hyperbolic trajectory on a Bunimovich stadium}
\label{f:billimore}
\end{figure}

We now discuss under which conditions $\varphi$ is hyperbolic.
We start with the simplest case of a closed trajectory of period 2
(which is necessarily orthogonal to the boundary):
\begin{prop}
  \label{l:billiard-per2}
Assume that $(\theta_1,0)$ is a periodic point for $\varphi$ with period~2,
namely $\varphi(\theta_1,0)=(\theta_2,0)$ and $\varphi(\theta_2,0)=(\theta_1,0)$
for some $\theta_2\in\partial\Omega$. Let $\ell:=|\mathbf x(\theta_1)-\mathbf x(\theta_2)|$,
$K_1:=K(\theta_1)$, $K_2:=K(\theta_2)$.
Then $\varphi$ is hyperbolic on the closed trajectory $\{(\theta_j,0)\}$,
in the sense of Definition~\ref{d:hyp-map}, if and only if the following condition holds:
\begin{equation}
  \label{e:billiard-per2}
(1-\ell K_1)(1-\ell K_2)\notin [0, 1].
\end{equation}
\end{prop}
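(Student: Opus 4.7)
The plan is to reduce the hyperbolicity question to an elementary trace criterion on a single $2\times 2$ matrix, then compute that trace from \eqref{e:huge-matrix}. Since $K=\{(\theta_1,0),(\theta_2,0)\}$ is a single period-$2$ orbit, Definition~\ref{d:hyp-map} is equivalent to asking that $d\varphi^2(\theta_1,0)$ be a hyperbolic linear map (no eigenvalues of modulus~$1$); the splitting $T_{(\theta_1,0)}M^\circ=E_u\oplus E_s$ is then the eigenspace decomposition, and the splitting at $(\theta_2,0)$ is obtained by pushing forward under $d\varphi(\theta_1,0)$, automatically satisfying \eqref{e:hyp-map-2}--\eqref{e:hyp-map-s}.

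Since $\det d\varphi\equiv 1$ (noted right after \eqref{e:huge-matrix}), the eigenvalues of $d\varphi^2(\theta_1,0)$ multiply to $1$. Writing $T:=\operatorname{tr}(d\varphi^2(\theta_1,0))$, they are $\tfrac{1}{2}(T\pm\sqrt{T^2-4})$, and these lie off the unit circle iff $|T|>2$. The boundary case $|T|=2$ gives a repeated eigenvalue $\pm 1$ on the unit circle and hence fails hyperbolicity; this explains why the bad interval in \eqref{e:billiard-per2} is closed at both endpoints.

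It remains to compute $T$. Specializing \eqref{e:huge-matrix} to $\sigma_1=\sigma_2=0$ gives $d\varphi(\theta_1,0)$ with diagonal entries $\ell K_1-1,\ \ell K_2-1$ and off-diagonal entries $-\ell$ and $K_1+K_2-\ell K_1K_2$. The off-diagonal entries are symmetric in $K_1,K_2$, so $d\varphi(\theta_2,0)$ is obtained simply by exchanging the two diagonal entries. The two matrices thus take the form $\bigl(\begin{smallmatrix}a&b\\c&d\end{smallmatrix}\bigr)$ and $\bigl(\begin{smallmatrix}d&b\\c&a\end{smallmatrix}\bigr)$, and the chain-rule identity $d\varphi^2(\theta_1,0)=d\varphi(\theta_2,0)\cdot d\varphi(\theta_1,0)$ immediately gives trace $2ad+2bc$. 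A one-line expansion yields
\[
T=2(\ell K_1-1)(\ell K_2-1)-2\ell(K_1+K_2-\ell K_1K_2)=4(1-\ell K_1)(1-\ell K_2)-2,
\]
so $|T|>2$ is precisely $(1-\ell K_1)(1-\ell K_2)\notin[0,1]$, which is \eqref{e:billiard-per2}.

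The argument is essentially bookkeeping; the only conceptual step is the initial reduction to a trace inequality, which combines the definition of hyperbolicity on a finite orbit with area preservation. The same method generalizes with no new ideas to closed orbits of arbitrary period, giving the standard stability criterion $|\operatorname{tr}(d\varphi^n)|>2$.
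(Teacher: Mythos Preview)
Your proof is correct and follows essentially the same route as the paper: reduce hyperbolicity on the period-$2$ orbit to the condition that $A=d\varphi^2(\theta_1,0)$ has no eigenvalues on the unit circle, use $\det A=1$ to rewrite this as $|\tr A|>2$, and compute the trace from~\eqref{e:huge-matrix}. You provide a bit more detail on the matrix structure and the eigenvalue dichotomy, but the argument and the final trace formula $T=2+4\ell(\ell K_1K_2-K_1-K_2)=4(1-\ell K_1)(1-\ell K_2)-2$ coincide with the paper's.
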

\Remark Condition~\eqref{e:billiard-per2} always holds in the concave case
(when $K_1,K_2<0$). However it sometimes also holds in the convex case, see Figure~\ref{f:billimore}.
\begin{proof}
Put
$$
A:=d\varphi^2(\theta_1,0)=d\varphi(\theta_2,0)d\varphi(\theta_1,0).
$$
Then $\varphi$ is hyperbolic on $\{(\theta_j,0)\}$ if and only if
$A$ has no eigenvalues on the unit circle. Since $\det A=1$, this is equivalent
to $|\tr A|>2$. Computing
$$
\tr A=2+4\ell(\ell K_1K_2-K_1-K_2)
$$
we arrive to the condition~\eqref{e:billiard-per2}.
\end{proof}
For general sets we restrict to the concave case (the corresponding billiards
are sometimes called \emph{dispersing}):
\begin{prop}
  \label{l:billiard-gen}
Assume that $\mathcal K\subset M^\circ$ is a $\varphi$-invariant compact set
and the curvature $K$ satisfies $K(\theta)<0$ for all $(\theta,\sigma)\in\mathcal K$.
Then the billiard ball map $\varphi$ is hyperbolic on~$\mathcal K$ in the sense
of Definition~\ref{d:hyp-map}.
\end{prop}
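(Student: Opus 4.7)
My plan is to adapt the geodesic-flow argument of Theorem~\ref{t:geodesic-hyperbolic} (Lemmas~\ref{l:cones-1}--\ref{l:geodfinal}) to the billiard setting: exhibit invariant cone fields on $TM^\circ|_{\mathcal K}$, promote them to uniformly expanding/contracting cones in an adapted norm, and extract $E_u,E_s$ as nested intersections.

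First I would exploit the sign pattern of~\eqref{e:huge-matrix}. If both $(\theta_1,\sigma_1)$ and $(\theta_2,\sigma_2)=\varphi(\theta_1,\sigma_1)$ lie in $\mathcal K$, then $K_1,K_2<0$ and $\ell>0$, and each of the four entries of $d\varphi(\theta_1,\sigma_1)$ is a sum of strictly negative terms; in particular every entry of $d\varphi$ is negative. This motivates the \emph{unstable} and \emph{stable} cones
\begin{equation*}
\mathcal C^u(\theta,\sigma):=\{a\partial_\theta+b\partial_\sigma:ab\ge 0\},\qquad \mathcal C^s(\theta,\sigma):=\{a\partial_\theta+b\partial_\sigma:ab\le 0\}.
\end{equation*}
A sign check using the all-negative pattern of $d\varphi$ (and the corresponding mixed-sign pattern of $d\varphi^{-1}$, using $\det d\varphi\equiv 1$) gives the invariances
\begin{equation*}
d\varphi(\mathcal C^u(\rho))\subset \mathcal C^u(\varphi(\rho)),\qquad d\varphi^{-1}(\mathcal C^s(\rho))\subset \mathcal C^s(\varphi^{-1}(\rho))
\end{equation*}
(strictly into the interior) whenever the relevant points lie in $\mathcal K$.

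To upgrade this to uniform expansion I would introduce the adapted \emph{p-metric} of dispersing billiards,
\begin{equation*}
|a\partial_\theta+b\partial_\sigma|_p^2:=(1-\sigma^2)\,a^2+\frac{b^2}{1-\sigma^2},
\end{equation*}
or equivalently work with the Wojtkowski quadratic form $\mathcal Q(v)=ab$. Compactness of $\mathcal K\subset M^\circ$ yields uniform bounds $|\sigma|\le 1-c_0$, $-K\ge K_0>0$, and $\ell_0\le \ell\le L$ on $\mathcal K$; a direct computation from~\eqref{e:huge-matrix} then produces $\tilde\lambda\in(0,1)$ with
\begin{equation*}
|d\varphi(\rho)v|_p\ge\tilde\lambda^{-1}|v|_p\ \ (v\in\mathcal C^u(\rho)),\qquad |d\varphi^{-1}(\rho)v|_p\ge\tilde\lambda^{-1}|v|_p\ \ (v\in\mathcal C^s(\rho))
\end{equation*}
for every $\rho\in\mathcal K$. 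Following Lemma~\ref{l:geodfinal} I then set
\begin{equation*}
E_u(\rho):=\bigcap_{n\ge 0}d\varphi^n\bigl(\mathcal C^u(\varphi^{-n}(\rho))\bigr),\quad E_s(\rho):=\bigcap_{n\ge 0}d\varphi^{-n}\bigl(\mathcal C^s(\varphi^n(\rho))\bigr),
\end{equation*}
show each is a one-dimensional subspace by the Grassmannian/split-and-let-$n\to\infty$ argument used there, and read off the invariance and contraction clauses of Definition~\ref{d:hyp-map} from the uniform cone expansion and $\mathcal C^u\cap\mathcal C^s=\{0\}$.

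The main obstacle is this expansion estimate: the Euclidean norm on $(\theta,\sigma)$ is \emph{not} expanded by $d\varphi$, because the $(1-\sigma_j^2)^{\pm 1/2}$ factors in~\eqref{e:huge-matrix} can be small or large and create cancellations. The p-metric is precisely the one in which $\sqrt{1-\sigma^2}\,d\theta$ becomes the physical arc length transverse to the reflected ray, and in this normalization $d\varphi$ acts as the Jacobi propagation of a dispersing wave front — free flight of length $\ell$ between bounces, plus a jump by $-2K_i\sqrt{1-\sigma_i^2}$ at each reflection — which is strictly defocusing precisely because $K_1,K_2<0$. The strict negativity of $K$ on $\mathcal K$ together with the freeness from glancing (both consequences of $\mathcal K\Subset M^\circ$) are exactly the ingredients needed to make this defocusing uniform, and hence to obtain the constant $\tilde\lambda<1$ above.
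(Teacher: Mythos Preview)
Your proposal is correct and follows essentially the same route as the paper's own sketch: the same sign analysis of~\eqref{e:huge-matrix} yielding all-negative entries, the same invariant cones $\{ab\ge 0\}$ and $\{ab\le 0\}$, the same adapted norm $|v|_\rho^2=(1-\sigma^2)v_\theta^2+v_\sigma^2/(1-\sigma^2)$, and the same appeal to the nested-intersection argument of Lemma~\ref{l:geodfinal}. If anything, your discussion of why the $p$-metric is the right one (defocusing of wave fronts, compactness of $\mathcal K\subset M^\circ$ to avoid glancing) is more explicit than the paper's sketch.
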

\Remark
One example of a compact $\varphi$-invariant set is a closed (non-glancing) trajectory.
Another example is when $\Omega$ is the complement of several strictly convex obstacles
(that is, the exterior concave case), we impose the \emph{no-eclipse condition}
that no obstacle intersects the convex hull of the union of any two other obstacles,
and $\mathcal K$ is the reduction to boundary of the \emph{trapped set} which
consists of all billiard ball trajectories which stay in a bounded set for all times.
The no-eclipse condition ensures that trapped trajectories cannot be glancing.
See Figure~\ref{f:several-obstacles}.
\begin{figure}
\includegraphics[width=5.5cm]{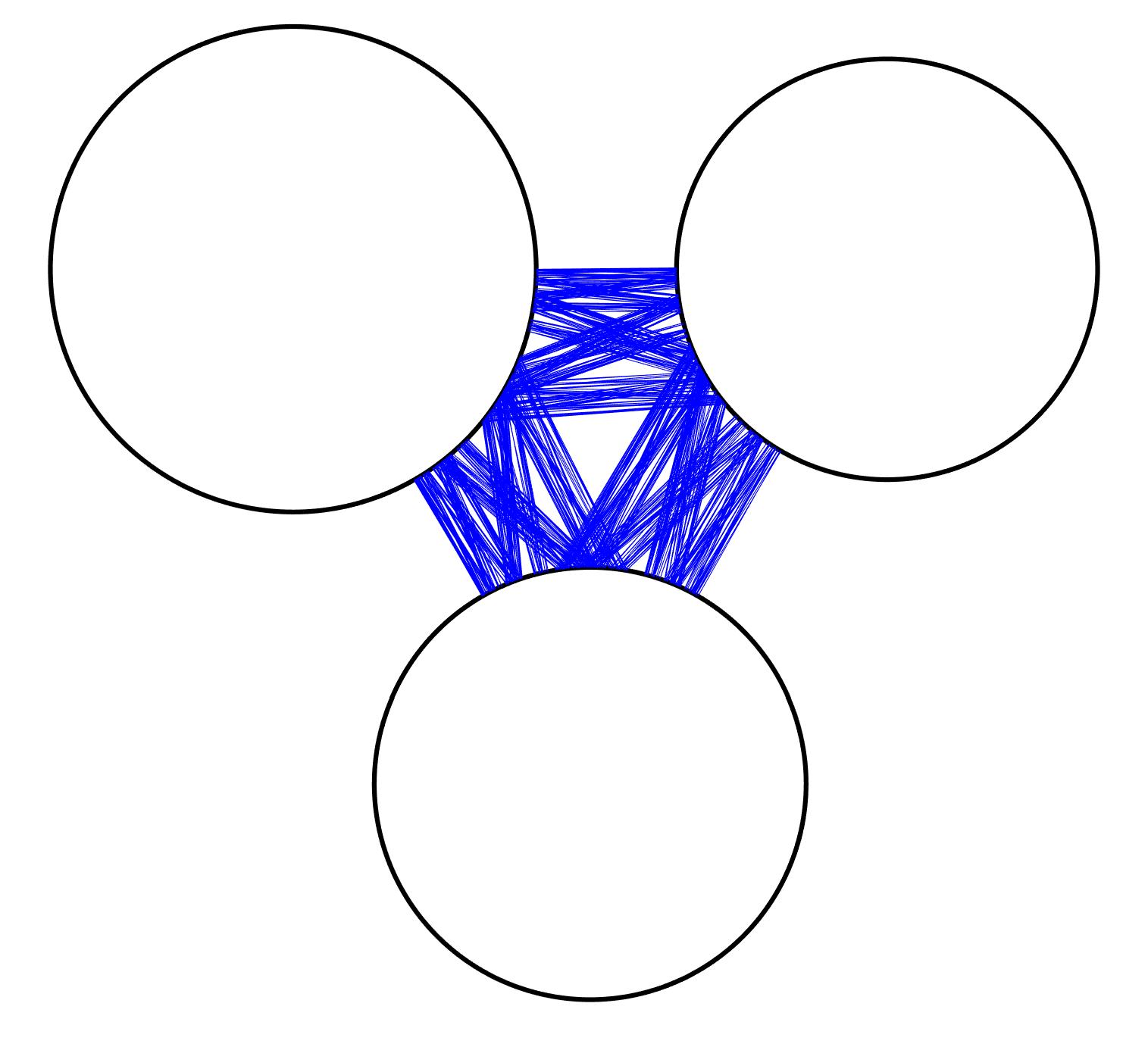}\quad
\includegraphics[width=8.25cm]{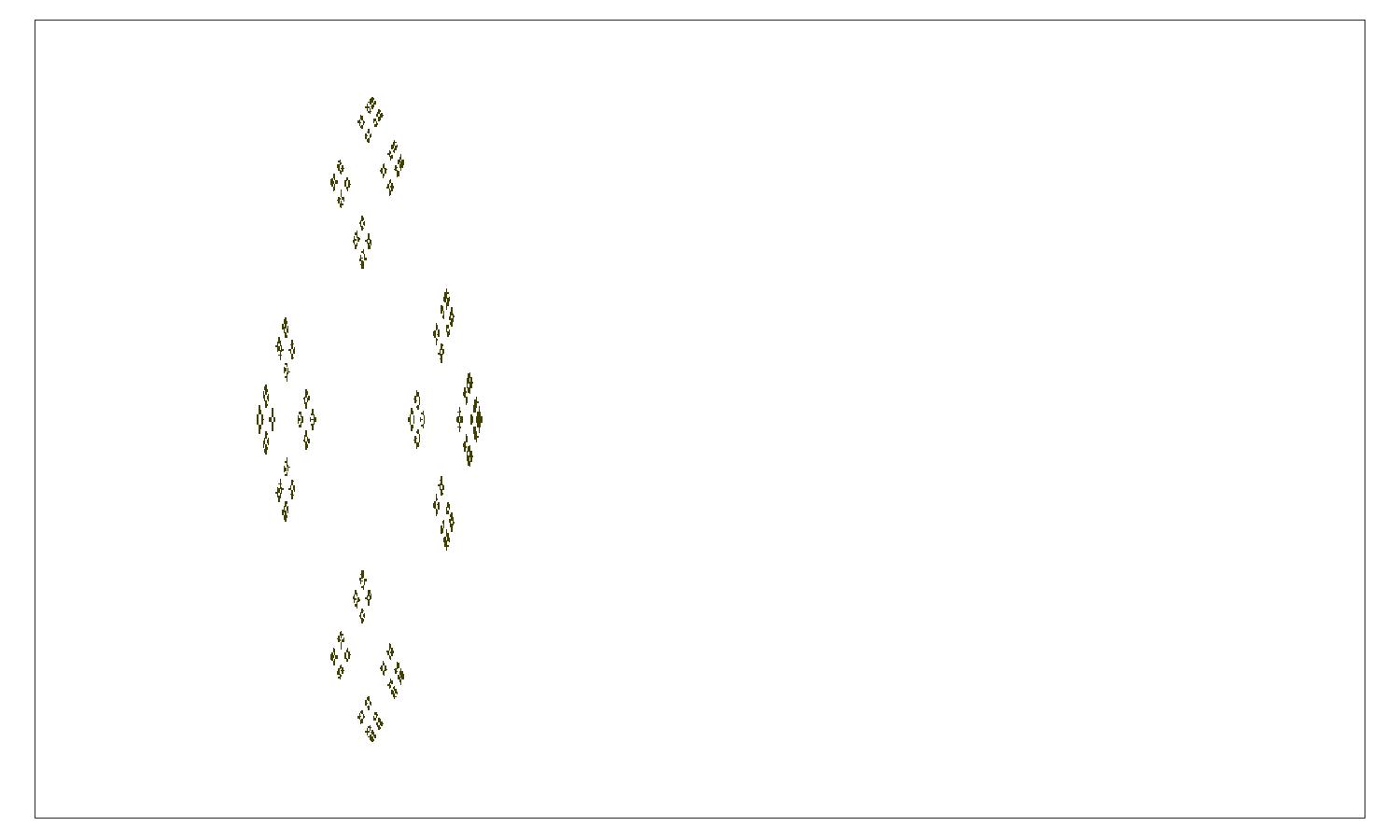}
\caption{Left: numerically computed set of trapped trajectories in the exterior of 3 disks.
Right: the corresponding reduction to the boundary $\mathcal K$,
that is the trapped set of the billiard ball map, restricted to the bottom
circle; the horizontal direction is $\theta$
and the vertical direction is $\sigma$. Both sets exhibit fractal structure.}
\label{f:several-obstacles}
\end{figure}
%
\begin{proof}[Sketch of the proof]
We argue similarly to~\S\ref{s:surf-neg}.
Consider the cones in $\mathbb R^2$
$$
\mathcal C^u_0:=\{(v_\theta,v_\sigma)\mid v_\theta\cdot v_\sigma\geq 0\},\quad
\mathcal C^s_0:=\{(v_\theta,v_\sigma)\mid v_\theta\cdot v_\sigma\leq 0\}.
$$
By the concavity condition, for all $\rho:=(\theta_1,\sigma_1)\in\mathcal K$
we have $K_1,K_2<0$ in~\eqref{e:huge-matrix}. Thus all entries
of the matrix $d\varphi(\rho)$ are negative.
It follows that
\begin{equation}
  \label{e:conifer-1}
d\varphi(\rho)\mathcal C^u_0\subset \mathcal C^u_0,\quad
d\varphi(\rho)^{-1}\mathcal C^s_0\subset\mathcal C^s_0.
\end{equation}
Next, at each $\rho=(\theta,\sigma)\in \mathcal K$ define the norm
$|\bullet|_{\rho}$ by
$$
|(v_\theta,v_\sigma)|^2_{\rho}:=(1-\sigma^2) v_\theta^2+{v_\sigma^2\over 1-\sigma^2}.
$$
Then there exists $\lambda>1$ such that for all $\rho\in \mathcal K$ and $v\in\mathbb R^2$
\begin{align}
  \label{e:conifer-2}
v\in \mathcal C^u_0&\quad\Longrightarrow\quad |d\varphi(\rho)v|_{\varphi(\rho)}\geq\lambda
|v|_\rho,\\
  \label{e:conifer-3}
v\in \mathcal C^s_0&\quad\Longrightarrow\quad |d\varphi^{-1}(\rho)v|_{\varphi(\rho)}\geq\lambda
|v|_\rho.
\end{align}
The hyperbolicity of $\varphi$ on $\mathcal K$ now follows by adapting the proof of Lemma~\ref{l:geodfinal},
using~\eqref{e:conifer-1}--\eqref{e:conifer-3} in place of~\eqref{e:coney-1}--\eqref{e:coney-4}.
\end{proof}


\end{document}